\documentclass[onefignum,onetabnum,
]{siamart251216}

\usepackage{bm}
\usepackage{amssymb}

\usepackage{tikz}

\usepackage{booktabs}
\usepackage{array}

\usepackage{algorithm}
\usepackage{algpseudocode}

\newsiamremark{question}{Question}

\headers{Everything is Vecchia}{E. Kaminetz \& R.J. Webber}

\title{Everything is Vecchia: \\ Unifying low-rank and sparse inverse Cholesky approximations
}

\author{Eagan Kaminetz\thanks{University of California San Diego, La Jolla, CA
  (\email{ekaminetz@ucsd.edu}, \email{rwebber@ucsd.edu}).}
\and Robert J. Webber\footnotemark[2]
}

\ifpdf
\hypersetup{
  pdftitle={Everything is Vecchia},
  pdfauthor={E. Kaminetz \& R.J. Webber}
}
\fi

\begin{document}

\maketitle

\begin{abstract}
The partial pivoted Cholesky approximation accurately represents matrices that are close to being low-rank.
Meanwhile, the Vecchia approximation accurately represents matrices with inverse Cholesky factors that are close to being sparse.
What happens if a partial Cholesky approximation is combined with a Vecchia approximation of the residual?
This paper shows how the sum is exactly a Vecchia approximation of the original matrix with an augmented sparsity pattern. 
Thus, Vecchia approximations subsume a class of existing matrix approximations and have broad applicability.
\end{abstract}

\begin{keywords}
  Vecchia approximation, partial pivoted Cholesky approximation, kernel matrix, factorized sparse approximate inverse
\end{keywords}

\begin{AMS}
  65F55, 65C99, 15A23
\end{AMS}

\section{Motivation}
\label{sec:intro}

The goal of this paper is to approximate a large, dense, positive-semidefinite matrix $\bm{A} \in \mathbb{C}^{n \times n}$ by looking up and processing individual entries $\bm{A}(i,j)$.
As the main example, $\bm{A}$ might be the kernel matrix for a high-dimensional machine learning data set \cite{scholkopf2001learning}. 
Since a kernel matrix can be very large (e.g., $n \geq 10^5$), kernel computations require an approximation $\hat{\bm{A}} \approx \bm{A}$ that is generated in \emph{linear} or \emph{sublinear} time.
A linear-time algorithm runs in $\mathcal{O}(n^2)$ arithmetic operations, comparable to the cost of looking at each entry of $\bm{A}$ once.
A sublinear-time algorithm runs in $o(n^2)$ operations, less than the cost of a single pass over $\bm{A}$'s entries.

Two approximations that can be generated in linear or sublinear time are the partial pivoted Cholesky \cite{chen2025randomly} and Vecchia \cite{vecchia1988estimation} methods.
They are traditionally regarded as accurate approximations for different types of matrices.
\begin{itemize}
\item Partial pivoted Cholesky provides an accurate approximation when the target matrix is close to being low-rank \cite[Thm.~2.3]{chen2025randomly}.
\item Vecchia provides an accurate approximation when the inverse Cholesky factor is close to being sparse \cite[Sec.~B.2]{schafer2021sparse}.
\end{itemize}
This paper unifies the two approaches.
It shows that the partial Cholesky approximation followed by a Vecchia approximation of the residual is exactly equivalent to a Vecchia approximation of the original matrix with an augmented sparsity pattern (\cref{thm:everything_vecchia}).
As a computational benefit, the hybrid method generates Vecchia approximations with $r$ nonzeros per row in $\mathcal{O}(rn)$ entry accesses rather than $\mathcal{O}(r^2 n)$, making them more practical for large kernel matrices.

The partial Cholesky + Vecchia approach, which appeared in past work \cite{zhao2024adaptive,cai2025posterior}, is both theoretically elegant and practically effective.
The next subsections present highlights of this paper's theory (\cref{sec:theory}) and experiments (\cref{sec:experiments}).

\subsection{Theoretical optimality} \label{sec:theory}

This paper asks and answers, ``In what way is the Vecchia approximation optimal?''
Optimality theory for the Vecchia approximation was developed in past work \cite{vecchia1988estimation,kaporin1994new,axelsson2000sublinear,yeremin2000factorized,schafer2021sparse}, but this paper presents a new extension to positive-semidefinite matrices and new error bounds for linear solves and determinant calculations.

The optimality analysis is based on the following Kaporin condition number \cite{kaporin1994new,axelsson2000sublinear}, which measures the accuracy of a matrix approximation.
Here we slightly extend the traditional definition to handle positive-semidefinite matrices.
\begin{definition}[Kaporin condition number] \label{def:kaporin}
    For any positive-semidefinite matrix $\bm{A} \in \mathbb{C}^{n \times n}$ and any positive-semidefinite approximation $\hat{\bm{A}} \in \mathbb{C}^{n \times n}$, the Kaporin condition number is
    \begin{equation*}
        \kappa_{\rm Kap} = \frac{\bigl(\frac{1}{r} \operatorname{tr} ( \bm{A} \hat{\bm{A}}^+)\bigr)^{r}}{\operatorname{vol} (\bm{A} \hat{\bm{A}}^+)},
        \qquad \text{where } r = \operatorname{rank}(\bm{A}),
    \end{equation*}
    if $\bm{A}$ and $\hat{\bm{A}}$ have the same range.
    The Kaporin condition number is $\kappa_{\rm Kap} = \infty$ if $\bm{A}$ and $\hat{\bm{A}}$ have different ranges.
    Here, the volume is the product of the positive eigenvalues.
\end{definition}
The Kaporin condition number is the arithmetic mean of the positive eigenvalues of $\bm{A} \hat{\bm{A}}^+$ raised to the $\operatorname{rank}(\bm{A})$ power, divided by the product of the eigenvalues.
Intuitively, $\kappa_{\rm Kap}$ measures how well the approximation preserves the spectrum of $\bm{A}$; $\kappa_{\rm Kap} = 1$ corresponds to exact recovery up to a constant multiple, while large values indicate distortion of the eigenvalues.

Kaporin \cite[App.~A.3]{kaporin1994new} established that the Vecchia approximation achieves the smallest possible Kaporin condition number for any strictly positive-definite target matrix and any sparsity pattern.
See \cref{thm:optimality} for an extension of this result to positive-semidefinite matrices.

The Kaporin optimality result is important because the Kaporin condition number controls the error of several linear algebra calculations.
\Cref{tab:results} shows that a smaller Kaporin condition number directly improves two common tasks: solving linear systems and estimating determinants.
Full descriptions of the linear algebra calculations and Kaporin condition number bounds are in \cref{sec:optimality}.

\begin{table}[t]
\centering
\caption{Summary of error bounds in terms of the Kaporin condition number $\kappa_{\rm Kap}$.
The direct solver bounds require the normalization $\operatorname{tr}\bigl(\bm{A} \hat{\bm{A}}^+\bigr) = \operatorname{rank}(\bm{A})$.
The determinant bounds require that $\bm{A}$ and $\hat{\bm{A}}$ are strictly positive-definite, and the iterative determinant bound requires $\log(\kappa_{\rm Kap}) \leq n$.} \label{tab:results}
\begin{tabular}{
    >{\raggedright\arraybackslash}m{0.18\linewidth}  
    >{\raggedright\arraybackslash}m{0.46\linewidth}  
    >{\raggedright\arraybackslash}m{0.24\linewidth}  
}
\toprule
\textbf{Method} & \textbf{Error bound} & \textbf{Reference} \\
\midrule
Linear system, direct solver &
$\displaystyle
\frac{\lVert \hat{\bm{x}} - \bm{x}_{\star} \rVert_{\bm{A}}^2}
     {\lVert \bm{x}_0 - \bm{x}_{\star} \rVert_{\bm{A}}^2}
 \leq 
 2 \operatorname{rank}(\bm{A})
 \log(\kappa_{\mathrm{Kap}})$
 & \cref{prop:direct}
\\[10pt]
Linear system, iterative solver &
$\displaystyle
\frac{\lVert \bm{x}_t - \bm{x}_{\star} \rVert_{\bm{A}}^2}
     {\lVert \bm{x}_0 - \bm{x}_{\star} \rVert_{\bm{A}}^2}
 \leq 
 \biggl[\frac{3\log(\kappa_{\mathrm{Kap}})}{t}\biggr]^t$
 & \cite{axelsson2000sublinear}, \cref{prop:pcg}
\\[10pt]
Determinant, direct solver &
$\displaystyle
\log\biggl(\frac{\det \hat{\bm{A}}}{\det \bm{A}}\biggr)
= \log(\kappa_{\mathrm{Kap}})$
 & \cref{prop:det}
\\[10pt]
Determinant, iterative solver &
$\displaystyle
\mathbb{E}\biggl|\log\biggl(
\frac{{\rm e}^{s_t}\det \hat{\bm{A}}}{\det \bm{A}}
\biggr)
\biggr|^2
\leq
\frac{4\log(\kappa_{\mathrm{Kap}})}{t}$
 & \cref{prop:stochastic_det}
\\[10pt]
\bottomrule
\end{tabular}
\end{table}

\subsection{Empirical performance}
\label{sec:experiments}

To test the performance of partial Cholesky + Vecchia, we downloaded $22$ machine learning data sets and subsampled each data set to $n = 20{,}000$ data points containing $d \in [4, 784]$ normalized predictors; see \cref{tab:datasets} for a list of data sets.
Using the data points $\bm{z}_1, \ldots, \bm{z}_n \in \mathbb{R}^d$, we defined the strictly positive-definite $n \times n$ kernel matrix with entries
\begin{equation*}
    \bm{A}(i,j) = \exp\biggl(-\frac{\lVert \bm{z}_i - \bm{z}_j \rVert^2}{2d}\biggr) + \mu\, \delta(i,j),
    \qquad \text{for } \mu \in \{10^{-3}, 10^{-6}, 10^{-10}\}.
\end{equation*}
Then we applied conjugate gradient to iteratively solve linear systems $\bm{A} \bm{x} = \bm{b}$ using various  matrix approximations $\hat{\bm{A}}$ as preconditioners.
We declared each problem ``solved'' as soon as the relative error reached a tolerance $\lVert \bm{A} \hat{\bm{x}} - \bm{b} \rVert / \lVert \bm{b} \rVert \leq 10^{-3}$.
Full details are in \cref{sec:empirical}.

\begin{figure}[t]
    \centering
    \includegraphics[width=\linewidth]{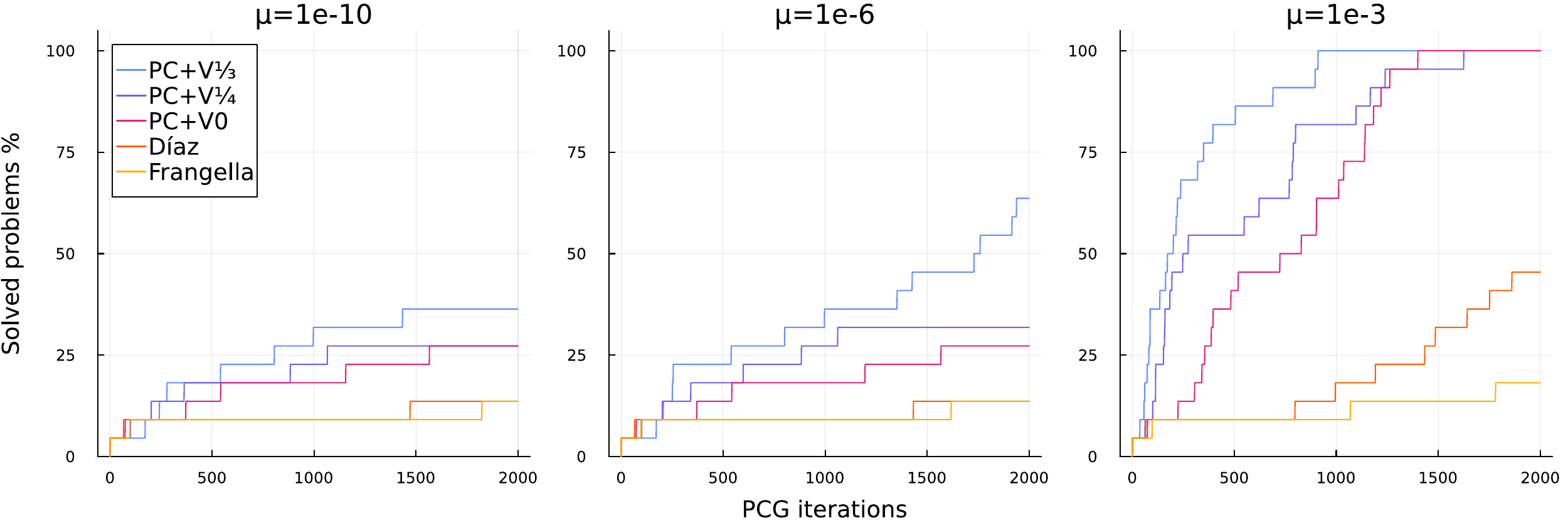}
    \caption{Comparison of five preconditioners
    based on randomly pivoted Cholesky \cite{chen2025randomly} with rank $r = \lfloor n^{1/2} \rfloor = 141$.
    Partial Cholesky + Vecchia preconditioners use either $q = 0$ (\textbf{PC+V0}), $q = \lfloor n^{1/4} \rfloor = 11$ (\textbf{PC+V1/4}), or $q = \lfloor n^{1/3} \rfloor = 27$ (\textbf{PC+V1/3}) nonzers per row in the Vecchia component.
    Alternative preconditioners (\textbf{D{\'i}az} \cite{diaz2024robust} and \textbf{Frangella} \cite{frangella2023randomized}) modify randomly pivoted Cholesky by adding a multiple of the identity to the approximation and/or its nullspace.}
    \label{fig:intro}
\end{figure}

\Cref{fig:intro} shows that partial Cholesky + Vecchia preconditioners consistently outperform existing Cholesky-based methods \cite{frangella2023randomized,diaz2024robust}, solving up to $11\times$ as many problems within $t = 1000$ iterations.
Among partial Cholesky + Vecchia preconditioners, raising the number of off-diagonal nonzeros per row in the Vecchia component from $q = 0$ to $q = \lfloor n^{1/3}\rfloor = 27$ increases the number of solved problems by $1.6$--$2.0\times$.
Despite these gains, forming effective preconditioners for near-singular matrices remains an open problem because \emph{no} available preconditioner can solve half the target problems given the smallest regularization parameter $\mu = 10^{-10}$.

In conclusion, partial Cholesky + Vecchia provides satisfactory approximations to many but not all positive-semidefinite kernel matrices that were previously inaccessible.
In the future, we are optimistic that we can continue to improve this approximation by creative efforts to optimize the sparsity pattern.

\subsection{Organization}

The rest of the paper is organized as follows.
\Cref{sec:notation} establishes notation.
\Cref{sec:background} proves that partial Cholesky + Vecchia = Vecchia.
\Cref{sec:optimality} presents optimality theory for the Vecchia approximation.
\Cref{sec:optimization} introduces optimization strategies for choosing the sparsity pattern in the Vecchia approximation,
and \cref{sec:empirical} presents numerical experiments.

\subsection{Notation} \label{sec:notation}

Scalars are written in lower case italics, e.g., $m,n,r \in \mathbb{N}$. Vectors are in lower case boldface, e.g., $\bm{u}, \bm{v} \in \mathbb{C}^n$.
Matrices are in boldface capital letters, e.g., $\bm{A}, \bm{B} \in \mathbb{C}^{n \times n}$.
Index sets are in sans serif font, e.g., $\mathsf{R}$, $\mathsf{S} \subseteq \{1, \ldots, n\}$.
We use $\bm{u}(i)$ to refer to to the $i$th entry of a vector $\bm{u}$,
and we use $\bm{A}(i,j)$ to refer to the $(i,j)$ entry of a matrix $\bm{A}$.
Given index sets $\mathsf{R},\mathsf{S}$, we use $\bm{u}(\mathsf{R})$ to refer to the subvector $(\bm{u}(i))_{i \in \mathsf{R}}$, and we use $\bm{A}(\mathsf{R},\mathsf{S})$ to refer to the submatrix $(\bm{A}(i,j))_{i \in \mathsf{R},\, j \in \mathsf{S}}$.
Additionally, $\bm{A}(i, \cdot)$ and $\bm{A}(\cdot, i)$ indicate the $i$th row and column of $\bm{A}$.

Following the standard conventions, $\bm{0}$ denotes a column vector, row vector, or matrix of all zeros; $\mathbf{I}$ is an identity matrix; and
$\bm{e}_i$ denotes a standard basis vector that which is all zeros except for a $1$ in the $i$th entry.
The complex conjugate of a scalar $u$ is $\overline{u}$. 
The conjugate transpose of a vector $\bm{v}$ or matrix $\bm{A}$ is $\bm{v}^*$ or $\bm{A}^*$. The inverse of $\bm{A}$ is $\bm{A}^{-1}$, the conjugate transpose inverse is $\bm{A}^{-*}$, and the Moore-Penrose pseudoinverse is $\bm{A}^+$.

Given any positive-semidefinite matrix $\bm{A} \in \mathbb{C}^{n \times n}$, the $\bm{A}$-weighted distance between two vectors $\bm{u}$ and $\bm{v}$ is
\begin{equation*}
    d_{\bm{A}}(\bm{u}, \bm{v}) = \bigl[(\bm{u} - \bm{v})^* \bm{A} (\bm{u} - \bm{v})\bigr]^{1/2}.
\end{equation*}
The $\bm{A}$-weighted distance between a vector $\bm{u}$ and a set of vectors $\mathsf{V}$ is
\begin{equation*}
    d_{\bm{A}}(\bm{u}, \mathsf{V}) = \inf_{\bm{v} \in \mathsf{V}} d_{\bm{A}}(\bm{u}, \bm{v}).
\end{equation*}
Last, the volume is the product of the positive eigenvalues of $\bm{A}$,
\begin{equation*}
    \operatorname{vol}(\bm{A}) = \prod_{\lambda_i(\bm{A}) > 0} \lambda_i(\bm{A}).
\end{equation*}
The volume is the same as the determinant if $\bm{A}$ is strictly positive-definite.

\section{Partial Cholesky + Vecchia = Vecchia} \label{sec:background}

This section introduces the framework of sparse Cholesky approximation (\cref{sec:cholesky_background}).
Then it proves that partial Cholesky + Vecchia = Vecchia (\cref{sec:main_result}) and discusses broader implications (\cref{sec:implications}).

\subsection{Factored approximations based on the Cholesky decomposition} \label{sec:cholesky_background}

Any positive-semidefinite matrix can be exactly represented through a Cholesky or inverse Cholesky decomposition.
See the below definitions and see \cref{fig:cholesky_ill} for illustrations.
\begin{definition}[Pivoted Cholesky and inverse Cholesky decompositions]
For any positive-semidefinite matrix $\bm{A} \in \mathbb{C}^{n \times n}$ and permutation matrix $\bm{P} \in \{0, 1\}^{n \times n}$, the pivoted Cholesky and pivoted inverse Cholesky decompositions are defined as
\begin{equation}
\label{eq:exact_cholesky}
	\bm{A} = \bm{P} \bm{L} \bm{D} \bm{L}^* \bm{P}^*
	\quad \text{and} \quad
	\bm{A} = \bm{P} \bm{C}^{-1} \bm{D} \bm{C}^{-*} \bm{P}^*.
\end{equation}
Here, $\bm{D} \in \mathbb{R}_+^{n \times n}$ is a nonnegative-valued diagonal matrix while $\bm{L} \in \mathbb{C}^{n \times n}$ and $\bm{C} \in \mathbb{C}^{n \times n}$ are lower triangular matrices with ones on the diagonal.
\end{definition}
The Cholesky decomposition can be manipulated into inverse Cholesky form and vice versa through a matrix inversion $\bm{C} = \bm{L}^{-1}$.

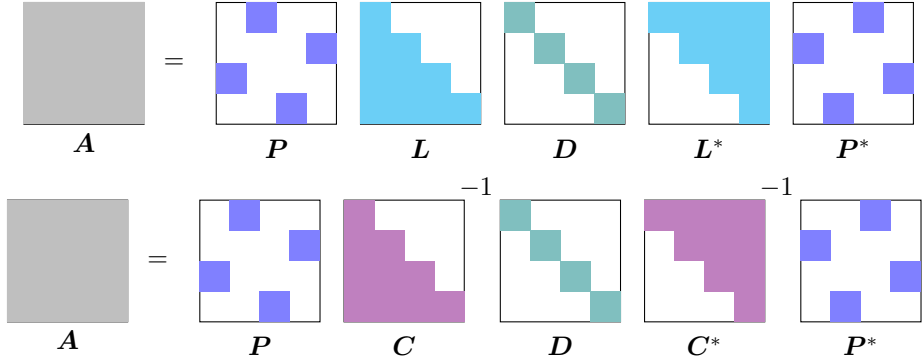
\begin{figure}[t]
\centering
\begin{tikzpicture}[scale=0.8]

\tikzset{
  matrix box/.style={draw=black, thin},
  nz/.style={fill opacity=1.0},
}

\begin{scope}[shift={(0,0)}]
  \draw[matrix box] (0,0) rectangle (2,2);
  \foreach \i in {0,1,2,3} {
    \foreach \j in {0,1,2,3} {
      \fill[nz, fill=gray!50] (0.5*\j,0.5*\i) rectangle (0.5*\j+0.5,0.5*\i+0.5);
    }
  }
  \node at (1,-0.3) {$\bm{A}$};
\end{scope}

\node at (2.5,1) {$=$};

\begin{scope}[shift={(3.2,0)}]
  \draw[matrix box] (0,0) rectangle (2,2);
  \fill[nz, fill=blue!50] (0.5,1.5) rectangle (1.0,2.0); 
  \fill[nz, fill=blue!50] (1.5,1.0) rectangle (2.0,1.5); 
  \fill[nz, fill=blue!50] (0.0,0.5) rectangle (0.5,1.0); 
  \fill[nz, fill=blue!50] (1.0,0.0) rectangle (1.5,0.5); 
  \node at (1,-0.4) {$\bm{P}$};
\end{scope}

\begin{scope}[shift={(5.6,0)}]
  \draw[matrix box] (0,0) rectangle (2,2);
  \fill[nz, fill=cyan!50] (0.0,1.5) rectangle (0.5,2.0); 
  \fill[nz, fill=cyan!50] (0.0,1.0) rectangle (0.5,1.5); 
  \fill[nz, fill=cyan!50] (0.5,1.0) rectangle (1.0,1.5); 
  \fill[nz, fill=cyan!50] (0.0,0.5) rectangle (0.5,1.0); 
  \fill[nz, fill=cyan!50] (0.5,0.5) rectangle (1.0,1.0); 
  \fill[nz, fill=cyan!50] (1.0,0.5) rectangle (1.5,1.0); 
  \fill[nz, fill=cyan!50] (0.0,0.0) rectangle (0.5,0.5); 
  \fill[nz, fill=cyan!50] (0.5,0.0) rectangle (1.0,0.5); 
  \fill[nz, fill=cyan!50] (1.0,0.0) rectangle (1.5,0.5); 
  \fill[nz, fill=cyan!50] (1.5,0.0) rectangle (2.0,0.5); 
  \node at (1,-0.4) {$\bm{L}$};
\end{scope}

\begin{scope}[shift={(8.0,0)}]
  \draw[matrix box] (0,0) rectangle (2,2);
  \fill[nz, fill=teal!50] (0.0,1.5) rectangle (0.5,2.0); 
  \fill[nz, fill=teal!50] (0.5,1.0) rectangle (1.0,1.5); 
  \fill[nz, fill=teal!50] (1.0,0.5) rectangle (1.5,1.0); 
  \fill[nz, fill=teal!50] (1.5,0.0) rectangle (2.0,0.5); 
  \node at (1,-0.4) {$\bm{D}$};
\end{scope}

\begin{scope}[shift={(10.4,0)}]
  \draw[matrix box] (0,0) rectangle (2,2);
  \fill[nz, fill=cyan!50] (0.0,1.5) rectangle (0.5,2.0); 
  \fill[nz, fill=cyan!50] (0.5,1.5) rectangle (1.0,2.0); 
  \fill[nz, fill=cyan!50] (1.0,1.5) rectangle (1.5,2.0); 
  \fill[nz, fill=cyan!50] (1.5,1.5) rectangle (2.0,2.0); 
  \fill[nz, fill=cyan!50] (0.5,1.0) rectangle (1.0,1.5); 
  \fill[nz, fill=cyan!50] (1.0,1.0) rectangle (1.5,1.5); 
  \fill[nz, fill=cyan!50] (1.5,1.0) rectangle (2.0,1.5); 
  \fill[nz, fill=cyan!50] (1.0,0.5) rectangle (1.5,1.0); 
  \fill[nz, fill=cyan!50] (1.5,0.5) rectangle (2.0,1.0); 
  \fill[nz, fill=cyan!50] (1.5,0.0) rectangle (2.0,0.5); 
  \node at (1,-0.4) {$\bm{L}^*$};
\end{scope}

\begin{scope}[shift={(12.8,0)}]
  \draw[matrix box] (0,0) rectangle (2,2);
  \fill[nz, fill=blue!50] (1.0,1.5) rectangle (1.5,2.0); 
  \fill[nz, fill=blue!50] (0.0,1.0) rectangle (0.5,1.5); 
  \fill[nz, fill=blue!50] (1.5,0.5) rectangle (2.0,1.0); 
  \fill[nz, fill=blue!50] (0.5,0.0) rectangle (1.0,0.5); 
  \node at (1,-0.4) {$\bm{P}^*$};
\end{scope}

\end{tikzpicture}
\begin{tikzpicture}[scale=0.8]

\tikzset{
  matrix box/.style={draw=black, thin},
  nz/.style={fill opacity=1.0},
}

\begin{scope}[shift={(0,0)}]
  \draw[matrix box] (0,0) rectangle (2,2);
  \foreach \i in {0,1,2,3} {
    \foreach \j in {0,1,2,3} {
      \fill[nz, fill=gray!50] (0.5*\j,0.5*\i) rectangle (0.5*\j+0.5,0.5*\i+0.5);
    }
  }
  \node at (1,-0.3) {$\bm{A}$};
\end{scope}

\node at (2.5,1) {$=$};

\begin{scope}[shift={(3.2,0)}]
  \draw[matrix box] (0,0) rectangle (2,2);
  \fill[nz, fill=blue!50] (0.5,1.5) rectangle (1.0,2.0); 
  \fill[nz, fill=blue!50] (1.5,1.0) rectangle (2.0,1.5); 
  \fill[nz, fill=blue!50] (0.0,0.5) rectangle (0.5,1.0); 
  \fill[nz, fill=blue!50] (1.0,0.0) rectangle (1.5,0.5); 
  \node at (1,-0.4) {$\bm{P}$};
\end{scope}

\begin{scope}[shift={(5.6,0)}]
  \draw[matrix box] (0,0) rectangle (2,2);
  \fill[nz, fill=violet!50] (0.0,1.5) rectangle (0.5,2.0); 
  \fill[nz, fill=violet!50] (0.0,1.0) rectangle (0.5,1.5); 
  \fill[nz, fill=violet!50] (0.5,1.0) rectangle (1.0,1.5); 
  \fill[nz, fill=violet!50] (0.0,0.5) rectangle (0.5,1.0); 
  \fill[nz, fill=violet!50] (0.5,0.5) rectangle (1.0,1.0); 
  \fill[nz, fill=violet!50] (1.0,0.5) rectangle (1.5,1.0); 
  \fill[nz, fill=violet!50] (0.0,0.0) rectangle (0.5,0.5); 
  \fill[nz, fill=violet!50] (0.5,0.0) rectangle (1.0,0.5); 
  \fill[nz, fill=violet!50] (1.0,0.0) rectangle (1.5,0.5); 
  \fill[nz, fill=violet!50] (1.5,0.0) rectangle (2.0,0.5); 
  \node at (1,-0.4) {$\bm{C}$};
  \node at (2.2,2.2) {${-1}$};
\end{scope}

\begin{scope}[shift={(8.2,0)}]
  \draw[matrix box] (0,0) rectangle (2,2);
  \fill[nz, fill=teal!50] (0.0,1.5) rectangle (0.5,2.0); 
  \fill[nz, fill=teal!50] (0.5,1.0) rectangle (1.0,1.5); 
  \fill[nz, fill=teal!50] (1.0,0.5) rectangle (1.5,1.0); 
  \fill[nz, fill=teal!50] (1.5,0.0) rectangle (2.0,0.5); 
  \node at (1,-0.4) {$\bm{D}$};
\end{scope}

\begin{scope}[shift={(10.6,0)}]
  \draw[matrix box] (0,0) rectangle (2,2);
  \fill[nz, fill=violet!50] (0.0,1.5) rectangle (0.5,2.0); 
  \fill[nz, fill=violet!50] (0.5,1.5) rectangle (1.0,2.0); 
  \fill[nz, fill=violet!50] (1.0,1.5) rectangle (1.5,2.0); 
  \fill[nz, fill=violet!50] (1.5,1.5) rectangle (2.0,2.0); 
  \fill[nz, fill=violet!50] (0.5,1.0) rectangle (1.0,1.5); 
  \fill[nz, fill=violet!50] (1.0,1.0) rectangle (1.5,1.5); 
  \fill[nz, fill=violet!50] (1.5,1.0) rectangle (2.0,1.5); 
  \fill[nz, fill=violet!50] (1.0,0.5) rectangle (1.5,1.0); 
  \fill[nz, fill=violet!50] (1.5,0.5) rectangle (2.0,1.0); 
  \fill[nz, fill=violet!50] (1.5,0.0) rectangle (2.0,0.5); 
  \node at (1,-0.4) {$\bm{C}^*$};
  \node at (2.2,2.2) {${-1}$};
\end{scope}

\begin{scope}[shift={(13.2,0)}]
  \draw[matrix box] (0,0) rectangle (2,2);
  \fill[nz, fill=blue!50] (1.0,1.5) rectangle (1.5,2.0); 
  \fill[nz, fill=blue!50] (0.0,1.0) rectangle (0.5,1.5); 
  \fill[nz, fill=blue!50] (1.5,0.5) rectangle (2.0,1.0); 
  \fill[nz, fill=blue!50] (0.5,0.0) rectangle (1.0,0.5); 
  \node at (1,-0.4) {$\bm{P}^*$};
\end{scope}

\end{tikzpicture}
\caption{Cholesky and inverse Cholesky decompositions of a dense matrix $\bm{A}$.
Filled boxes show entries that are allowed to be nonzero.
}
\label{fig:cholesky_ill}
\end{figure}

The Cholesky and inverse Cholesky decompositions facilitate fast matrix computations.
Once the factors $\bm{P}, \bm{D}, \bm{L}$ or $\bm{P}, \bm{D}, \bm{C}$ have been generated and stored, the matrix $\bm{A}$ does not need to be accessed again.
Any matrix--vector product $\bm{v} \mapsto \bm{A} \bm{v}$ can be computed by sequentially multiplying the vector with each matrix in the factorization \cref{eq:exact_cholesky}, and the multiplications with $\bm{C}^{-1}$ or $\bm{C}^{-*}$ can be carried out via efficient triangular solves, without computing the inverse explicitly.
Additionally, any consistent linear system $\bm{A} \bm{x} = \bm{b}$ can be solved by multiplying the output vector $\bm{b}$ by a sequence of matrices,
\begin{equation*}
    \bm{x} = \bm{P} \bm{L}^{-*} \bm{D}^+ \bm{L}^{-1} \bm{P}^* \bm{b}
    \quad \text{or} \quad
    \bm{x} = \bm{P} \bm{C}^* \bm{D}^+ \bm{C} \bm{P}^* \bm{b}.
\end{equation*}
Therefore, the cost of a matrix-vector product or linear solve is $\mathcal{O}(n^2)$ arithmetic operations.

Motivated by the Cholesky and inverse Cholesky decompositions, we can generate a sparse Cholesky or sparse inverse Cholesky approximation
\begin{equation}
\label{eq:inexact_cholesky}
	\hat{\bm{A}} = \bm{P} \hat{\bm{L}} \hat{\bm{D}} \hat{\bm{L}}^* \bm{P}^*
	\quad \text{or} \quad
	\hat{\bm{A}} = \bm{P} \hat{\bm{C}}^{-1} \hat{\bm{D}} \hat{\bm{C}}^{-*} \bm{P}^*.
\end{equation}
Here $\bm{P}$ is a permutation matrix, $\hat{\bm{D}}$ is a nonnegative-valued diagonal matrix, and $\hat{\bm{L}}$ or $\hat{\bm{C}}$ is a sparse lower triangular matrix with ones on the diagonal.
The \emph{sparsity pattern} $(\mathsf{S}_i)_{i=1}^n$ is a sequence of \emph{sparsity index sets} $\mathsf{S}_i \subseteq \{1, \ldots, i-1\}$ describing which off-diagonal entries in the rows of $\hat{\bm{L}}$ or $\hat{\bm{C}}$ are allowed to be nonzero.

The imposition of sparsity leads to three benefits.
First, the sparse approximation factors can be stored in $\mathcal{O}(s n)$ memory.
Second, matrix--vector products and linear solves can be computed in $\mathcal{O}(sn)$ arithmetic operations, where $s$ is an upper bound on the cardinality of the sparsity pattern: $|\mathsf{S}_i| \leq s$ for each $i = 1, \ldots, n$.
Third, in many cases generating the approximation $\hat{\bm{A}}$ is relatively cheap, as it only requires examining $\mathcal{O}(s n)$ or $\mathcal{O}(s^2 n)$ entries of $\bm{A}$ and performing $\mathcal{O}(s^2 n)$ or $\mathcal{O}(s^3 n)$ additional arithmetic operations.

\Cref{sec:partial,sec:vecchia} describe two types of sparse Cholesky approximations that can be generated in sublinear or linear time.

\subsubsection{Partial pivoted Cholesky} \label{sec:partial}

The partial pivoted Cholesky approximation is a common rank-revealing factorization for positive-semidefinite matrices \cite{golub1965numerical}.
See the following definition, and see \cref{fig:partial} for an illustration.

\begin{figure}[t]
\centering
\begin{tikzpicture}[scale=0.8]

\tikzset{
  matrix box/.style={draw=black, thin},
  nz/.style={fill opacity=1.0},
}

\begin{scope}[shift={(0,0)}]
  \draw[matrix box] (0,0) rectangle (2,2);
  \fill[nz, fill=gray!50] (0.0,1.5) rectangle (0.5,2.0); 
  \fill[nz, fill=gray!50] (0.5,1.5) rectangle (1.0,2.0); 
  \fill[nz, fill=gray!50] (1.0,1.5) rectangle (1.5,2.0); 
  \fill[nz, fill=gray!50] (1.5,1.5) rectangle (2.0,2.0); 
  \fill[nz, fill=gray!50] (0.0,1.0) rectangle (0.5,1.5); 
  \fill[nz, fill=gray!50] (1.0,1.0) rectangle (1.5,1.5); 
  \fill[nz, fill=gray!50] (0.0,0.5) rectangle (0.5,1.0); 
  \fill[nz, fill=gray!50] (0.5,0.5) rectangle (1.0,1.0); 
  \fill[nz, fill=gray!50] (1.0,0.5) rectangle (1.5,1.0); 
  \fill[nz, fill=gray!50] (1.5,0.5) rectangle (2.0,1.0); 
  \fill[nz, fill=gray!50] (0.0,0.0) rectangle (0.5,0.5); 
  \fill[nz, fill=gray!50] (1.0,0.0) rectangle (1.5,0.5); 
  \node at (1,-0.4) {$\bm{A}$};
\end{scope}

\node at (2.5,1) {$\rightarrow$};

\begin{scope}[shift={(3.2,0)}]
  \draw[matrix box] (0,0) rectangle (2,2);
  \fill[nz, fill=blue!50] (0.5,1.5) rectangle (1.0,2.0); 
  \fill[nz, fill=blue!50] (1.5,1.0) rectangle (2.0,1.5); 
  \fill[nz, fill=blue!50] (0.0,0.5) rectangle (0.5,1.0); 
  \fill[nz, fill=blue!50] (1.0,0.0) rectangle (1.5,0.5); 
  \node at (1,-0.4) {$\bm{P}$};
\end{scope}

\begin{scope}[shift={(5.6,0)}]
  \draw[matrix box] (0,0) rectangle (2,2);
  \fill[nz, fill=cyan!50] (0.0,1.5) rectangle (0.5,2.0); 
  \fill[nz, fill=cyan!50] (0.0,1.0) rectangle (0.5,1.5); 
  \fill[nz, fill=cyan!50] (0.5,1.0) rectangle (1.0,1.5); 
  \fill[nz, fill=cyan!50] (0.0,0.5) rectangle (0.5,1.0); 
  \fill[nz, fill=cyan!50] (0.5,0.5) rectangle (1.0,1.0); 
  \fill[nz, fill=cyan!50] (1.0,0.5) rectangle (1.5,1.0); 
  \fill[nz, fill=cyan!50] (0.0,0.0) rectangle (0.5,0.5); 
  \fill[nz, fill=cyan!50] (0.5,0.0) rectangle (1.0,0.5); 
  \fill[nz, fill=cyan!50] (1.5,0.0) rectangle (2.0,0.5); 
  \node at (1,-0.4) {$\hat{\bm{L}}$};
\end{scope}

\begin{scope}[shift={(8.0,0)}]
  \draw[matrix box] (0,0) rectangle (2,2);
  \fill[nz, fill=teal!50] (0.0,1.5) rectangle (0.5,2.0); 
  \fill[nz, fill=teal!50] (0.5,1.0) rectangle (1.0,1.5); 
  \node at (1,-0.4) {$\hat{\bm{D}}$};
\end{scope}

\begin{scope}[shift={(10.4,0)}]
  \draw[matrix box] (0,0) rectangle (2,2);
  \fill[nz, fill=cyan!50] (0.0,1.5) rectangle (0.5,2.0); 
  \fill[nz, fill=cyan!50] (1.0,1.5) rectangle (1.5,2.0); 
  \fill[nz, fill=cyan!50] (1.5,1.5) rectangle (2.0,2.0); 
  \fill[nz, fill=cyan!50] (0.5,1.0) rectangle (1.0,1.5); 
  \fill[nz, fill=cyan!50] (1.0,1.0) rectangle (1.5,1.5); 
  \fill[nz, fill=cyan!50] (1.5,1.0) rectangle (2.0,1.5); 
  \fill[nz, fill=cyan!50] (1.0,0.5) rectangle (1.5,1.0); 
  \fill[nz, fill=cyan!50] (1.5,0.5) rectangle (2.0,1.0); 
  \fill[nz, fill=cyan!50] (1.5,0.0) rectangle (2.0,0.5); 
  \node at (1,-0.4) {$\hat{\bm{L}}^*$};
\end{scope}

\begin{scope}[shift={(12.8,0)}]
  \draw[matrix box] (0,0) rectangle (2,2);
  \fill[nz, fill=blue!50] (1.0,1.5) rectangle (1.5,2.0); 
  \fill[nz, fill=blue!50] (0.0,1.0) rectangle (0.5,1.5); 
  \fill[nz, fill=blue!50] (1.5,0.5) rectangle (2.0,1.0); 
  \fill[nz, fill=blue!50] (0.5,0.0) rectangle (1.0,0.5); 
  \node at (1,-0.4) {$\bm{P}^*$};
\end{scope}

\end{tikzpicture}
\caption{Partial pivoted Cholesky accesses the gray-colored entries of $\bm{A}$.
Here, the approximation rank is $r = 2$ and the columns $u_1 = 3$ and $u_2 = 1$ are perfectly replicated.}
\label{fig:partial}
\end{figure}
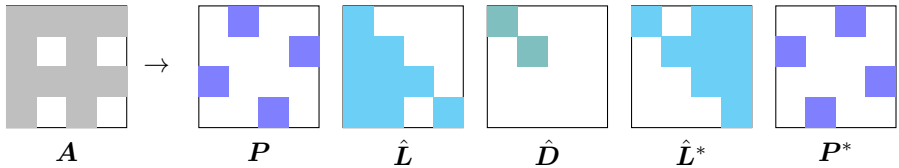

\begin{definition}[Partial pivoted Cholesky]
\label{def:cholesky_approximation}
    Given a positive-semidefinite matrix $\bm{A} \in \mathbb{C}^{n \times n}$, the partial pivoted Cholesky approximation with permutation $\bm{P}$ and approximation rank $r$ is a sparse Cholesky approximation
    $\hat{\bm{A}} = \bm{P} \hat{\bm{L}} \hat{\bm{D}} \hat{\bm{L}}^* \bm{P}^*$ where each row $\hat{\bm{L}}(i, \cdot)$ has sparsity pattern 
    \begin{equation*}
    \mathsf{S}_i = \{1, \ldots, r\} \cap \{1, \ldots i-1\}.
    \end{equation*}
    In this approximation, the first $r$ rows and columns of $\hat{\bm{L}} \hat{\bm{D}} \hat{\bm{L}}^*$ match the first $r$ rows and columns of the permuted matrix $\tilde{\bm{A}} = \bm{P}^* \bm{A} \bm{P}$, and the last $n - r$ diagonal entries of $\hat{\bm{D}}$ are all zeros.
\end{definition}

\begin{algorithm}[t]
    \caption{Partial pivoted Cholesky approximation} \label{alg:partial}
    \begin{algorithmic}
	\Require{Positive-semidefinite matrix $\bm{A} \in \mathbb{C}^{n \times n}$ with entry-wise access; permutation matrix $\bm{P} \in \{0, 1\}^{n \times n}$; approximation rank $r$}
	\Ensure{Sparse Cholesky approximation $\hat{\bm{A}} = \bm{P} \hat{\bm{L}} \hat{\bm{D}} \hat{\bm{L}}^* \bm{P}^*$ in factored form}
	\State{Initialize sparse matrices $\hat{\bm{F}} = \bm{P}$ and $\hat{\bm{D}} = \bm{0} \in \mathbb{C}^{n \times n}$}
	\Comment{$\hat{\bm{F}} = \bm{P} \hat{\bm{L}}$}
	\For{$i = 1, \ldots, r$}
	\State{Identify pivot $u_i \in \{1, \ldots, n\}$ with $\bm{P}(u_i, i) = 1$}
	\State{$\bm{v} \leftarrow \bm{A}(\cdot, u_i) - \hat{\bm{F}} \hat{\bm{D}} \hat{\bm{F}}(u_i, \cdot)^*$}
    \If{$\bm{v}(u_i) > 0$}
	\State{$\hat{\bm{F}}(\cdot, i) \leftarrow \bm{v} / \bm{v}(u_i)$}
	\State{$\hat{\bm{D}}(i, i) \leftarrow \bm{v}(u_i)$}
    \EndIf
	\EndFor
	\State{$\hat{\bm{L}} = \bm{P}^* \hat{\bm{F}}$}
    \end{algorithmic}
\end{algorithm}

\Cref{alg:partial} generates a partial Cholesky approximation using $\mathcal{O}(rn)$ entry lookups and $\mathcal{O}(r^2 n)$ additional arithmetic operations.
The algorithm extracts a sequence of \emph{pivots} $u_1, \ldots, u_r$, which are indicated by the nonzero entries in the permutation matrix: $\bm{P}(u_i, i) = 1$ for each $i = 1, \ldots, r$.
Then it generates a rank-$r$ positive-semidefinite matrix that exactly replicates the chosen columns $u_1, \ldots, u_r$.
The method requires $\mathcal{O}(r^2 n)$ operations because each step $i$ requires forming linear combinations of the first $i$ selected columns.

A long line of research has investigated pivot selection strategies for the partial Cholesky approximation.
For example, ``randomly pivoted Cholesky'' is a randomized selection rule that guarantees near-optimal approximation error in the expected trace norm \cite{chen2025randomly}, and column-pivoted Cholesky is an alternative pivot selection rule based on greedy selection \cite{golub1965numerical}. See \cref{sec:partial_plus} for more analysis and discussion.

\subsubsection{Vecchia approximation} \label{sec:vecchia}

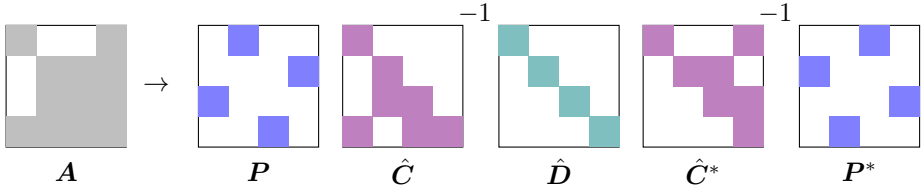
\begin{figure}[t]
\centering
\begin{tikzpicture}[scale=0.8]

\tikzset{
  matrix box/.style={draw=black, thin},
  nz/.style={fill opacity=1.0},
}

\begin{scope}[shift={(0,0)}]
  \draw[matrix box] (0,0) rectangle (2,2);
  \fill[nz, fill=gray!50] (0.0,1.5) rectangle (0.5,2.0); 
  \fill[nz, fill=gray!50] (1.5,1.5) rectangle (2.0,2.0); 
  \fill[nz, fill=gray!50] (0.5,1.0) rectangle (1.0,1.5); 
  \fill[nz, fill=gray!50] (1.0,1.0) rectangle (1.5,1.5); 
  \fill[nz, fill=gray!50] (1.5,1.0) rectangle (2.0,1.5); 
  \fill[nz, fill=gray!50] (0.5,0.5) rectangle (1.0,1.0); 
  \fill[nz, fill=gray!50] (1.0,0.5) rectangle (1.5,1.0); 
  \fill[nz, fill=gray!50] (1.5,0.5) rectangle (2.0,1.0); 
  \fill[nz, fill=gray!50] (0.0,0.0) rectangle (0.5,0.5); 
  \fill[nz, fill=gray!50] (0.5,0.0) rectangle (1.0,0.5); 
  \fill[nz, fill=gray!50] (1.0,0.0) rectangle (1.5,0.5); 
  \fill[nz, fill=gray!50] (1.5,0.0) rectangle (2.0,0.5); 
  \node at (1,-0.4) {$\bm{A}$};
\end{scope}

\node at (2.5,1) {$\rightarrow$};

\begin{scope}[shift={(3.2,0)}]
  \draw[matrix box] (0,0) rectangle (2,2);
  \fill[nz, fill=blue!50] (0.5,1.5) rectangle (1.0,2.0); 
  \fill[nz, fill=blue!50] (1.5,1.0) rectangle (2.0,1.5); 
  \fill[nz, fill=blue!50] (0.0,0.5) rectangle (0.5,1.0); 
  \fill[nz, fill=blue!50] (1.0,0.0) rectangle (1.5,0.5); 
  \node at (1,-0.4) {$\bm{P}$};
\end{scope}

\begin{scope}[shift={(5.6,0)}]
  \draw[matrix box] (0,0) rectangle (2,2);
  \fill[nz, fill=violet!50] (0.0,1.5) rectangle (0.5,2.0); 
  \fill[nz, fill=violet!50] (0.5,1.0) rectangle (1.0,1.5); 
  \fill[nz, fill=violet!50] (0.5,0.5) rectangle (1.0,1.0); 
  \fill[nz, fill=violet!50] (1.0,0.5) rectangle (1.5,1.0); 
  \fill[nz, fill=violet!50] (0.0,0.0) rectangle (0.5,0.5); 
  \fill[nz, fill=violet!50] (1.0,0.0) rectangle (1.5,0.5); 
  \fill[nz, fill=violet!50] (1.5,0.0) rectangle (2.0,0.5); 
  \node at (1,-0.4) {$\hat{\bm{C}}$};
  \node at (2.2,2.2) {${-1}$};
\end{scope}

\begin{scope}[shift={(8.2,0)}]
  \draw[matrix box] (0,0) rectangle (2,2);
  \fill[nz, fill=teal!50] (0.0,1.5) rectangle (0.5,2.0); 
  \fill[nz, fill=teal!50] (0.5,1.0) rectangle (1.0,1.5); 
  \fill[nz, fill=teal!50] (1.0,0.5) rectangle (1.5,1.0); 
  \fill[nz, fill=teal!50] (1.5,0.0) rectangle (2.0,0.5); 
  \node at (1,-0.4) {$\hat{\bm{D}}$};
\end{scope}

\begin{scope}[shift={(10.6,0)}]
  \draw[matrix box] (0,0) rectangle (2,2);
  \fill[nz, fill=violet!50] (0.0,1.5) rectangle (0.5,2.0); 
  \fill[nz, fill=violet!50] (1.5,1.5) rectangle (2.0,2.0); 
  \fill[nz, fill=violet!50] (0.5,1.0) rectangle (1.0,1.5); 
  \fill[nz, fill=violet!50] (1.0,1.0) rectangle (1.5,1.5); 
  \fill[nz, fill=violet!50] (1.0,0.5) rectangle (1.5,1.0); 
  \fill[nz, fill=violet!50] (1.5,0.5) rectangle (2.0,1.0); 
  \fill[nz, fill=violet!50] (1.5,0.0) rectangle (2.0,0.5); 
  \node at (1,-0.4) {$\hat{\bm{C}}^{*}$};
  \node at (2.2,2.2) {${-1}$};
\end{scope}

\begin{scope}[shift={(13.2,0)}]
  \draw[matrix box] (0,0) rectangle (2,2);
  \fill[nz, fill=blue!50] (1.0,1.5) rectangle (1.5,2.0); 
  \fill[nz, fill=blue!50] (0.0,1.0) rectangle (0.5,1.5); 
  \fill[nz, fill=blue!50] (1.5,0.5) rectangle (2.0,1.0); 
  \fill[nz, fill=blue!50] (0.5,0.0) rectangle (1.0,0.5); 
  \node at (1,-0.4) {$\bm{P}^*$};
\end{scope}

\end{tikzpicture}
\caption{Vecchia approximation accesses the gray-colored entries in $\bm{A}$.
The sparsity pattern is $\mathsf{S}_1 = \emptyset$, $\mathsf{S}_2 = \emptyset$, $\mathsf{S}_3 = \{2\}$, $\mathsf{S}_4 = \{1, 3\}$.}
\label{fig:vecchia}
\end{figure}

The Vecchia approximation, named after the statistician Aldo Vecchia \cite{vecchia1988estimation}, is a common approximation strictly positive-definite covariance matrices in the Gaussian process literature.
The Vecchia approximation does not necessarily replicate any entries of $\bm{A}$, but it guarantees optimal approximation accuracy in terms of the Kaporin condition number (\cref{thm:optimality}).
See below for a mathematical definition, which extends to all positive-semidefinite matrices, and see \cref{fig:vecchia} for an illustration.

\begin{definition}[Vecchia approximation]
\label{def:vecchia_approximation}
Given a positive-semidefinite matrix $\bm{A} \in \mathbb{C}^{n \times n}$, 
the Vecchia approximation with permutation $\bm{P}$ and sparsity pattern $(\mathsf{S}_i)_{i=1}^n$ is a sparse inverse Cholesky approximation $\hat{\bm{A}} = \bm{P} \hat{\bm{C}}^{-1} \hat{\bm{D}} \hat{\bm{C}}^{-*} \bm{P}^*$ where each row $\hat{\bm{C}}(i, \cdot)$ has sparsity pattern $\mathsf{S}_i$, and it satisfies
\begin{equation}
\label{eq:formula}
\begin{bmatrix} \hat{\bm{C}}(i, \mathsf{S}_i) & 1 \end{bmatrix}
\begin{bmatrix}
    \tilde{\bm{A}}(\mathsf{S}_i, \mathsf{S}_i) & \tilde{\bm{A}}(\mathsf{S}_i, i) \\
    \tilde{\bm{A}}(i, \mathsf{S}_i) & \tilde{\bm{A}}(i, i)
\end{bmatrix}
=
\begin{bmatrix}
    \bm{0} & \hat{\bm{D}}(i, i)
\end{bmatrix}
\quad \text{for } \tilde{\bm{A}} = \bm{P}^* \bm{A} \bm{P}.
\end{equation}
\end{definition}

\begin{algorithm}[t]
\caption{Conventional Vecchia algorithm} \label{alg:vecchia}
\begin{algorithmic}
\Require{Positive-semidefinite matrix $\bm{A} \in \mathbb{C}^{n \times n}$ with entry-wise access; permutation matrix $\bm{P} \in \{0, 1\}^{n \times n}$; sparsity pattern $(\mathsf{S}_i)_{i=1}^n$}
\Ensure{Vecchia approximation $\hat{\bm{A}} = \bm{P} \hat{\bm{C}}^{-1} \hat{\bm{D}} \hat{\bm{C}}^{-*} \bm{P}^*$ in factored form}
\State{Initialize sparse matrices $\hat{\bm{C}} = \mathbf{I} \in \mathbb{C}^{n \times n}$ and $\hat{\bm{D}} = \bm{0} \in \mathbb{C}^{n \times n}$}
\For{$i = 1, \ldots, n$}
\Comment{Can be executed in parallel}
\State{$\begin{bmatrix}
\bm{M} & \bm{v} \\
\bm{v}^* & \alpha
\end{bmatrix}
\leftarrow \begin{bmatrix} \bm{P}(\cdot, \mathsf{S}_i) & \bm{P}(\cdot, i) \end{bmatrix}^* \bm{A} \begin{bmatrix} \bm{P}(\cdot, \mathsf{S}_i) & \bm{P}(\cdot, i) \end{bmatrix}$}
\State{Solve positive-semidefinite system $\bm{M}\bm{x} = -\bm{v}$}
\State{$\hat{\bm{C}}(i, \mathsf{S}_i) \leftarrow \bm{x}^*$}
\State{$\hat{\bm{D}}(i, i) \leftarrow \alpha + \bm{x}^*\bm{v}$}
\EndFor
\end{algorithmic}
\end{algorithm}

\Cref{alg:vecchia} presents a conventional Vecchia approach that iteratively solves a linear system to determine each row vector $\hat{\bm{C}}(i, \mathsf{S}_i)$ and then takes an inner product to determine $\hat{\bm{D}}(i,i)$.
This algorithm requires $\mathcal{O}(s^2 n)$ entry lookups and $\mathcal{O}(s^3 n)$ additional arithmetic operations, where $s$ is an upper bound on the sparsity pattern: $|\mathsf{S}_i| \leq s$ for each $i = 1, \ldots, n$.
However, the Vecchia approximation can be generated more quickly when the sparsity index sets $\mathsf{S}_i$ are highly overlapping.
Indeed, \cref{sec:main_result} will describe a specialized Vecchia construction with a smaller cost of $\mathcal{O}(s n)$ entry lookups and $\mathcal{O}(s^2 n)$ arithmetic operations.

The design choices in the Vecchia approximation are the permutation $\bm{P}$ and the sparsity pattern $(\mathsf{S}_i)_{i=1}^n$.
Historically, researchers chose the sparsity pattern using nearest neighbor basis vectors in the $\bm{A}$-weighted distance \cite{sun2016statistically},
and they generated the permutation matrix by recursively choosing the farthest-away basis vector in the same distance \cite{guinness2018permutation}.
More recently, Huan et al. \cite{huan2023sparse} proposed optimizing each sparsity index set using the orthogonal matching pursuit algorithm \cite{tropp2004greed}.
See \cref{sec:adding} for more discussion.

\subsection{Partial Cholesky + Vecchia = Vecchia} \label{sec:main_result}

\begin{algorithm}[t]
\caption{Partial Cholesky + Vecchia approximation \cite{zhao2024adaptive,cai2025posterior}} \label{alg:hybrid}
\begin{algorithmic}[1]
\Require{Permuation matrix $\bm{P} \in \{0, 1\}^{n \times n}$; Cholesky rank $r$; Vecchia sparsity pattern $(\mathsf{Q}_i)_{i=1}^n$; positive-semidefinite matrix $\bm{A} \in \mathbb{C}^{n \times n}$ with entry-wise access}
\Ensure{Vecchia approximation $\hat{\bm{A}} = \bm{P} \hat{\bm{C}}^{-1} \hat{\bm{D}} \hat{\bm{C}}^{-*} \bm{P}^*$ in factored form}
\State{Generate partial Cholesky approximation with permutation $\bm{P}$ and approximation rank $r$.
\begin{equation*}
    \hat{\bm{A}}_{\rm part}
    = \bm{P} 
    \begin{bmatrix}
    \hat{\bm{L}}_{11} & \\
    \hat{\bm{L}}_{21} & \mathbf{I}
    \end{bmatrix} 
    \begin{bmatrix} \hat{\bm{D}}_{11} & \\
    & \mathbf{0} \end{bmatrix}
    \begin{bmatrix}
    \hat{\bm{L}}_{11} & \\
    \hat{\bm{L}}_{21} & \mathbf{I}
    \end{bmatrix}^*
    \bm{P}^*,
\end{equation*}
where the matrices are partitioned into the first $r$ and last $n-r$ entries.}
\State{Let $\bm{R} =  \bm{A} - \hat{\bm{A}}_{\rm part}$ be the residual from the partial Cholesky approximation.
Generate Vecchia approximation with permutation $\bm{P}$ and sparsity pattern $(\mathsf{Q}_i)_{i=1}^n$.
\begin{equation*}
    \hat{\bm{A}}_{\rm res} = \bm{P} \begin{bmatrix}
        \mathbf{I} & \\
        \bm{0} & \hat{\bm{C}}_{22}
    \end{bmatrix}^{-1} 
    \begin{bmatrix}
        \bm{0} & \\
        & \hat{\bm{D}}_{22}
    \end{bmatrix}
    \begin{bmatrix}
        \mathbf{I} & \\
        \bm{0} & \hat{\bm{C}}_{22}
    \end{bmatrix}^{-*} \bm{P}^*.
\end{equation*}
}
\State{Define $\hat{\bm{C}}$ and $\hat{\bm{D}}$ according to
\begin{equation*}
    \hat{\bm{C}} = \begin{bmatrix}
        \hat{\bm{L}}_{11}^{-1} & \\
        -\hat{\bm{C}}_{22} \hat{\bm{L}}_{21} \hat{\bm{L}}_{11}^{-1} & \hat{\bm{C}}_{22}
    \end{bmatrix}
    \quad \text{and} \quad
    \hat{\bm{D}} = \begin{bmatrix}
        \hat{\bm{D}}_{11} & \\
        & \hat{\bm{D}}_{22}
    \end{bmatrix}.
\end{equation*}
}
\end{algorithmic}
\end{algorithm}

This paper analyzes a hybrid approximation \cite{zhao2024adaptive,cai2025posterior} combining the partial pivoted Cholesky and Vecchia approximations.
See \cref{alg:hybrid} for pseudocode.
We will prove that this approximation is equivalent to a Vecchia approximation with an augmented sparsity pattern.

\begin{theorem}[Partial Cholesky $+$ Vecchia = Vecchia] \label{thm:everything_vecchia}
    Given a target positive-semidefinite matrix $\bm{A} \in \mathbb{C}^{n \times n}$, consider the following two-part approximation.
    \begin{enumerate}
        \item Generate a partial Cholesky approximation of $\bm{A}$ with permutation $\bm{P}$ and approximation rank $r$. Call it $\hat{\bm{A}}_{\rm part}$.
        \item Generate a Vecchia approximation of the residual $\bm{R} = \bm{A} - \hat{\bm{A}}_{\rm part}$ with permutation $\bm{P}$ and sparsity pattern $(\mathsf{Q}_i)_{i=1}^n$. Call it $\hat{\bm{A}}_{\rm res}$.
    \end{enumerate}
    Then $\hat{\bm{A}}_{\rm part} + \hat{\bm{A}}_{\rm res}$ can be rewritten as a Vecchia approximation of $\bm{A}$ with permutation $\bm{P}$ and an augmented sparsity pattern $\mathsf{S}_i = \bigl(\{1, \ldots, r\} \cup \mathsf{Q}_i\bigr) \cap \{1, \ldots, i-1\}$.
\end{theorem}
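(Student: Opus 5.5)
Throughout I work in permuted coordinates: replacing $\bm{A}$ by $\tilde{\bm{A}} = \bm{P}^*\bm{A}\bm{P}$ I may assume $\bm{P} = \mathbf{I}$, since both the partial Cholesky approximation (\cref{def:cholesky_approximation}) and the Vecchia conditions \cref{eq:formula} are stated for $\tilde{\bm{A}}$. I partition every matrix into its first $r$ and last $n-r$ indices. The plan has three steps. First, show that the matrices $\hat{\bm{C}}, \hat{\bm{D}}$ of \cref{alg:hybrid} reproduce the sum $\hat{\bm{A}}_{\rm part} + \hat{\bm{A}}_{\rm res}$. Second, check that $\hat{\bm{C}}$ has the augmented sparsity pattern $\mathsf{S}_i$. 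Third, verify the Vecchia equations \cref{eq:formula} for $\bm{A}$ row by row.

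\emph{Factorization and sparsity.} Block inversion gives
\begin{equation*}
\hat{\bm{C}}^{-1} = \begin{bmatrix} \hat{\bm{L}}_{11} & \\ \hat{\bm{L}}_{21} & \hat{\bm{C}}_{22}^{-1} \end{bmatrix}.
\end{equation*}
Multiplying out $\hat{\bm{C}}^{-1}\hat{\bm{D}}\hat{\bm{C}}^{-*}$ yields the block matrix of $\hat{\bm{A}}_{\rm part}$, plus the term $\hat{\bm{C}}_{22}^{-1}\hat{\bm{D}}_{22}\hat{\bm{C}}_{22}^{-*}$ in the $(2,2)$ block, which is exactly $\hat{\bm{A}}_{\rm part} + \hat{\bm{A}}_{\rm res}$.

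For sparsity, the rows $i \le r$ of $\hat{\bm{C}}$ are rows of the unit lower triangular matrix $\hat{\bm{L}}_{11}^{-1}$, so they are supported in $\{1,\dots,i-1\}\cup\{i\}$. A row $i > r$ has off-diagonal support in the columns $\{1,\dots,r\}$ (from the block $-\hat{\bm{C}}_{22}\hat{\bm{L}}_{21}\hat{\bm{L}}_{11}^{-1}$) together with $\mathsf{Q}_i \cap \{1,\dots,i-1\}$ (from $\hat{\bm{C}}_{22}$). In both cases the support lies in $\mathsf{S}_i \cup \{i\}$.

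\emph{Vecchia equations.} Because $\hat{\bm{C}}(i,\cdot)$ is supported on $\mathsf{S}_i\cup\{i\}$, condition \cref{eq:formula} is equivalent to
\begin{equation*}
(\hat{\bm{C}}\bm{A})(i,\mathsf{S}_i) = \bm{0}
\quad\text{and}\quad
(\hat{\bm{C}}\bm{A})(i,i) = \hat{\bm{D}}(i,i).
\end{equation*}
So I compute $\hat{\bm{C}}\bm{A}$ blockwise. The key inputs are the replication identities from \cref{def:cholesky_approximation}:
\begin{equation*}
\bm{A}_{11} = \hat{\bm{L}}_{11}\hat{\bm{D}}_{11}\hat{\bm{L}}_{11}^*,
\qquad
\bm{A}_{21} = \hat{\bm{L}}_{21}\hat{\bm{D}}_{11}\hat{\bm{L}}_{11}^*.
\end{equation*}
They give $\hat{\bm{L}}_{11}^{-1}\bm{A}_{11} = \hat{\bm{D}}_{11}\hat{\bm{L}}_{11}^*$ and $\hat{\bm{L}}_{21}\hat{\bm{L}}_{11}^{-1}\bm{A}_{12} = \hat{\bm{L}}_{21}\hat{\bm{D}}_{11}\hat{\bm{L}}_{21}^*$, and hence
\begin{equation*}
\hat{\bm{C}}\bm{A} = \begin{bmatrix} \hat{\bm{D}}_{11}\hat{\bm{L}}_{11}^* & \ast \\ \bm{0} & \hat{\bm{C}}_{22}\bm{R}_{22}\end{bmatrix},
\qquad
\bm{R}_{22} = \bm{A}_{22} - \hat{\bm{L}}_{21}\hat{\bm{D}}_{11}\hat{\bm{L}}_{21}^*.
\end{equation*}

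For $i \le r$, the row $i$ of the upper triangular matrix $\hat{\bm{D}}_{11}\hat{\bm{L}}_{11}^*$ vanishes in the columns $1,\dots,i-1$ and equals $\hat{\bm{D}}(i,i)$ on the diagonal. This is exactly \cref{eq:formula}.

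For $i > r$, the entries in the columns $1,\dots,r$ vanish by the zero block. In the remaining columns, the row is the corresponding row of $\hat{\bm{C}}_{22}\bm{R}_{22}$. Here I use that $\bm{R}$ has zero first $r$ rows and columns (again by replication), so $\bm{R}_{22}$ is all of $\bm{R}$. The Vecchia conditions defining $\hat{\bm{A}}_{\rm res}$ therefore say precisely that this row vanishes on $\mathsf{Q}_i$ and equals $\hat{\bm{D}}_{22}$ on the diagonal. This establishes \cref{eq:formula} for every $i$.

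\emph{Main obstacle.} The step needing care is justifying the replication identities in degenerate cases, namely when some pivot residual $\bm{v}(u_i)$ equals $0$. In that case \cref{alg:partial} leaves the $i$th column of $\hat{\bm{F}}$ as a column of $\bm{P}$ and sets $\hat{\bm{D}}(i,i)=0$. I would argue as follows: the current residual is positive-semidefinite, so a zero diagonal entry forces its whole column to vanish. Thus the chosen column is still exactly replicated, and both $\bm{A}_{11}$ and $\bm{A}_{21}$ are reproduced, consistent with \cref{def:cholesky_approximation}. The same fact gives that $\bm{R}$ is positive-semidefinite with vanishing first $r$ rows and columns. This is what lets the residual Vecchia step decouple as in \cref{alg:hybrid}, with trivial rows $i \le r$.
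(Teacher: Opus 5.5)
Your proposal is correct and follows the paper's proof essentially step for step: reduce to $\bm{P}=\mathbf{I}$, confirm $\hat{\bm{C}}^{-1}\hat{\bm{D}}\hat{\bm{C}}^{-*}=\hat{\bm{A}}_{\rm part}+\hat{\bm{A}}_{\rm res}$ by block inversion, then verify $(\hat{\bm{C}}\bm{A})(i,\mathsf{S}_i)=\bm{0}$ and $(\hat{\bm{C}}\bm{A})(i,i)=\hat{\bm{D}}(i,i)$ separately for $i\le r$ (via the upper triangular block $\hat{\bm{D}}_{11}\hat{\bm{L}}_{11}^*$) and for $i>r$ (via reduction to the Vecchia equations for $\bm{R}$). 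The paper makes explicit the point you only gesture at with ``trivial rows $i\le r$'': a Vecchia factor of $\bm{R}$ need not have identity rows for $i\le r$ or a vanishing block $\hat{\bm{C}}_{21}$, but since $\hat{\bm{D}}_{\rm res}(i,i)=0$ for $i\le r$ the product depends only on $\hat{\bm{C}}_{22}$ and $\hat{\bm{D}}_{22}$, which is what justifies the simplified form of $\hat{\bm{A}}_{\rm res}$ you use; your treatment of zero pivots in \cref{alg:partial} is a correct extra detail that the paper simply takes from \cref{def:cholesky_approximation}.
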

\begin{proof}
    For notational simplicity, we assume the permutation is $\bm{P} = \mathbf{I}$.
    Otherwise, we can permute the indices of $\bm{A}$ and apply the proof to the permuted matrix.
    We carry out the proof in two steps.

    \underline{Step 1: $\hat{\bm{A}}$ is a sparse inverse Cholesky approximation.}
    We start by writing
    \begin{equation*}
        \hat{\bm{A}}_{\rm part} 
        = \begin{bmatrix}
        \hat{\bm{L}}_{11} & \\
        \hat{\bm{L}}_{21} & \mathbf{I}
        \end{bmatrix} 
        \begin{bmatrix} \hat{\bm{D}}_{11} & \\
        & 
        & \mathbf{0} \end{bmatrix}
        \begin{bmatrix}
        \hat{\bm{L}}_{11} & \\
        \hat{\bm{L}}_{21} & \mathbf{I}
        \end{bmatrix}^*
        \quad \text{and} \quad
        \bm{R}
        = \begin{bmatrix} \bm{0} & \bm{0} \\
        \bm{0} & \bm{R}_{22}
        \end{bmatrix},
    \end{equation*}
    where we have partitioned the matrices into the first $r$ and last $n-r$ entries.
    By \cref{def:vecchia_approximation}, the first $r$ entries in the Vecchia diagonal factor are zero, and it follows
    \begin{align*}
        \hat{\bm{A}}_{\rm res} &= \begin{bmatrix}
            \hat{\bm{C}}_{11} & \\
            \hat{\bm{C}}_{21} & \hat{\bm{C}}_{22}
        \end{bmatrix}^{-1} 
        \begin{bmatrix}
            \bm{0} & \\
            & \hat{\bm{D}}_{22}
        \end{bmatrix}
        \begin{bmatrix}
            \hat{\bm{C}}_{11} & \\
            \hat{\bm{C}}_{21} & \hat{\bm{C}}_{22}
        \end{bmatrix}^{-*} \\
        &= \begin{bmatrix}
            \hat{\bm{C}}_{11}^{-1} & \\
            -\hat{\bm{C}}_{22}^{-1} \hat{\bm{C}}_{21} \hat{\bm{C}}_{11}^{-1} & \hat{\bm{C}}_{22}^{-1}
        \end{bmatrix}
        \begin{bmatrix}
            \bm{0} & \\
            & \hat{\bm{D}}_{22}
        \end{bmatrix}
        \begin{bmatrix}
            \hat{\bm{C}}_{11}^{-1} & \\
            -\hat{\bm{C}}_{22}^{-1} \hat{\bm{C}}_{21} \hat{\bm{C}}_{11}^{-1} & \hat{\bm{C}}_{22}^{-1}
        \end{bmatrix}^* \\
        &= \begin{bmatrix}
            \mathbf{I} & \\
            \bm{0} & \hat{\bm{C}}_{22}
        \end{bmatrix}^{-1} 
        \begin{bmatrix}
            \bm{0} & \\
            & \hat{\bm{D}}_{22}
        \end{bmatrix}
        \begin{bmatrix}
            \mathbf{I} & \\
            \bm{0} & \hat{\bm{C}}_{22}
        \end{bmatrix}^{-*}.
    \end{align*}
    We have simplified the formula for the residual Vecchia approximation by making the first $r$ columns in the lower triangular factor standard basis vectors.
    Consequently, we can write
    \begin{align*}
    \hat{\bm{A}} &= \hat{\bm{A}}_{\rm part} + \hat{\bm{A}}_{\rm res} \\
    &= \begin{bmatrix}
    \hat{\bm{L}}_{11} & \\
    \hat{\bm{L}}_{21} & \hat{\bm{C}}_{22}^{-1}
    \end{bmatrix} 
    \begin{bmatrix} \hat{\bm{D}}_{11} & \\
    & 
    & \hat{\bm{D}}_{22} \end{bmatrix}
    \begin{bmatrix}
    \hat{\bm{L}}_{11} & \\
    \hat{\bm{L}}_{21} & \hat{\bm{C}}_{22}^{-1}
    \end{bmatrix}^* \\
    &= {\underbrace{\begin{bmatrix}
    \hat{\bm{L}}_{11}^{-1} & \\
    -\hat{\bm{C}}_{22} \hat{\bm{L}}_{21} \hat{\bm{L}}_{11}^{-1} & \hat{\bm{C}}_{22}
    \end{bmatrix}}_{\hat{\bm{C}}}}^{-1}
    \underbrace{\begin{bmatrix} \hat{\bm{D}}_{11} & \\
    & 
    & \hat{\bm{D}}_{22} \end{bmatrix}}_{\hat{\bm{D}}}
    {\underbrace{\begin{bmatrix}
    \hat{\bm{L}}_{11}^{-1} & \\
    -\hat{\bm{C}}_{22} \hat{\bm{L}}_{21} \hat{\bm{L}}_{11}^{-1} & \hat{\bm{C}}_{22}
    \end{bmatrix}}_{\hat{\bm{C}}}}^{-*}.
    \end{align*}
    This derivation shows that
    \textbf{the sparsity pattern of $\hat{\bm{A}}$ is the union of the sparsity patterns of $\hat{\bm{A}}_{\rm part}$ and $\hat{\bm{A}}_{\rm res}$.}
    See \cref{fig:crazy} for an illustration.

    \begin{figure}[t]
    \centering
    \begin{tikzpicture}[scale=0.55]
    
    \tikzset{
      matrix box/.style={draw=black, thin},
      nz/.style={fill opacity=1.0},
    }
    
    \begin{scope}[shift={(0,0)}]
      \draw[matrix box] (0,0) rectangle (3,3);
      \fill[nz, fill=gray!50] (1.0,2.5) rectangle (1.5,3.0); 
      \fill[nz, fill=gray!50] (2.5,2.5) rectangle (3.0,3.0); 
      \fill[nz, fill=gray!50] (1.0,2.0) rectangle (1.5,2.5); 
      \fill[nz, fill=gray!50] (2.5,2.0) rectangle (3.0,2.5); 
      \fill[nz, fill=gray!50] (0.0,1.5) rectangle (0.5,2.0); 
      \fill[nz, fill=gray!50] (0.5,1.5) rectangle (1.0,2.0); 
      \fill[nz, fill=gray!50] (1.0,1.5) rectangle (1.5,2.0); 
      \fill[nz, fill=gray!50] (1.5,1.5) rectangle (2.0,2.0); 
      \fill[nz, fill=gray!50] (2.0,1.5) rectangle (2.5,2.0); 
      \fill[nz, fill=gray!50] (2.5,1.5) rectangle (3.0,2.0); 
      \fill[nz, fill=gray!50] (1.0,1.0) rectangle (1.5,1.5); 
      \fill[nz, fill=gray!50] (2.5,1.0) rectangle (3.0,1.5); 
      \fill[nz, fill=gray!50] (1.0,0.5) rectangle (1.5,1.0); 
      \fill[nz, fill=gray!50] (2.5,0.5) rectangle (3.0,1.0); 
      \fill[nz, fill=gray!50] (0.0,0.0) rectangle (0.5,0.5); 
      \fill[nz, fill=gray!50] (0.5,0.0) rectangle (1.0,0.5); 
      \fill[nz, fill=gray!50] (1.0,0.0) rectangle (1.5,0.5); 
      \fill[nz, fill=gray!50] (1.5,0.0) rectangle (2.0,0.5); 
      \fill[nz, fill=gray!50] (2.0,0.0) rectangle (2.5,0.5); 
      \fill[nz, fill=gray!50] (2.5,0.0) rectangle (3.0,0.5); 
      \node at (1.5,-0.6) {$\bm{A}$};
    \end{scope}
    
    \node at (3.75,1.5) {$\rightarrow$};
    
    \begin{scope}[shift={(4.5,0)}]
      \draw[matrix box] (0,0) rectangle (3,3);
      \fill[nz, fill=cyan!50] (0.0,2.5) rectangle (0.5,3.0); 
      \fill[nz, fill=cyan!50] (0.0,2.0) rectangle (0.5,2.5); 
      \fill[nz, fill=cyan!50] (0.5,2.0) rectangle (1.0,2.5); 
      \fill[nz, fill=cyan!50] (0.0,1.5) rectangle (0.5,2.0); 
      \fill[nz, fill=cyan!50] (0.5,1.5) rectangle (1.0,2.0); 
      \fill[nz, fill=cyan!50] (1.0,1.5) rectangle (1.5,2.0); 
      \fill[nz, fill=cyan!50] (0.0,1.0) rectangle (0.5,1.5); 
      \fill[nz, fill=cyan!50] (0.5,1.0) rectangle (1.0,1.5); 
      \fill[nz, fill=cyan!50] (1.5,1.0) rectangle (2.0,1.5); 
      \fill[nz, fill=cyan!50] (0.0,0.5) rectangle (0.5,1.0); 
      \fill[nz, fill=cyan!50] (0.5,0.5) rectangle (1.0,1.0); 
      \fill[nz, fill=cyan!50] (2.0,0.5) rectangle (2.5,1.0); 
      \fill[nz, fill=cyan!50] (0.0,0.0) rectangle (0.5,0.5); 
      \fill[nz, fill=cyan!50] (0.5,0.0) rectangle (1.0,0.5); 
      \fill[nz, fill=cyan!50] (2.5,0.0) rectangle (3.0,0.5); 
      \node at (1.5,-0.6) {$\hat{\bm{L}}_{\rm part}$};
      \node at (3.3,3.3) {$\phantom{{-1}}$};
    \end{scope}
    
    \begin{scope}[shift={(8.5,0)}]
      \draw[matrix box] (0,0) rectangle (3,3);
      \fill[nz, fill=teal!50] (0.0,2.5) rectangle (0.5,3.0); 
      \fill[nz, fill=teal!50] (0.5,2.0) rectangle (1.0,2.5); 
      \node at (1.5,-0.6) {$\hat{\bm{D}}_{\rm part}$};
    \end{scope}
    
    \begin{scope}[shift={(12.0,0)}]
      \draw[matrix box] (0,0) rectangle (3,3);
      \fill[nz, fill=cyan!50] (0.0,2.5) rectangle (0.5,3.0); 
      \fill[nz, fill=cyan!50] (0.5,2.5) rectangle (1.0,3.0); 
      \fill[nz, fill=cyan!50] (1.0,2.5) rectangle (1.5,3.0); 
      \fill[nz, fill=cyan!50] (1.5,2.5) rectangle (2.0,3.0); 
      \fill[nz, fill=cyan!50] (2.0,2.5) rectangle (2.5,3.0); 
      \fill[nz, fill=cyan!50] (2.5,2.5) rectangle (3.0,3.0); 
      \fill[nz, fill=cyan!50] (0.5,2.0) rectangle (1.0,2.5); 
      \fill[nz, fill=cyan!50] (1.0,2.0) rectangle (1.5,2.5); 
      \fill[nz, fill=cyan!50] (1.5,2.0) rectangle (2.0,2.5); 
      \fill[nz, fill=cyan!50] (2.0,2.0) rectangle (2.5,2.5); 
      \fill[nz, fill=cyan!50] (2.5,2.0) rectangle (3.0,2.5); 
      \fill[nz, fill=cyan!50] (1.0,1.5) rectangle (1.5,2.0); 
      \fill[nz, fill=cyan!50] (1.5,1.0) rectangle (2.0,1.5); 
      \fill[nz, fill=cyan!50] (2.0,0.5) rectangle (2.5,1.0); 
      \fill[nz, fill=cyan!50] (2.5,0.0) rectangle (3.0,0.5); 
      \node at (1.5,-0.6) {$\hat{\bm{L}}_{\rm part}^*$};
      \node at (3.3,3.3) {$\phantom{{-1}}$};
    \end{scope}
    
    \end{tikzpicture}
    
    \begin{tikzpicture}[scale=0.55]
    
    \tikzset{
      matrix box/.style={draw=black, thin},
      nz/.style={fill opacity=1.0},
    }
    
    \begin{scope}[shift={(0,0)}]
      \draw[matrix box] (0,0) rectangle (3,3);
      \fill[nz, fill=gray!50] (0.0,2.5) rectangle (0.5,3.0); 
      \fill[nz, fill=gray!50] (0.5,2.5) rectangle (1.0,3.0); 
      \fill[nz, fill=gray!50] (0.0,2.0) rectangle (0.5,2.5); 
      \fill[nz, fill=gray!50] (0.5,2.0) rectangle (1.0,2.5); 
      \fill[nz, fill=gray!50] (1.5,1.0) rectangle (2.0,1.5); 
      \fill[nz, fill=gray!50] (2.0,1.0) rectangle (2.5,1.5); 
      \fill[nz, fill=gray!50] (1.5,0.5) rectangle (2.0,1.0); 
      \fill[nz, fill=gray!50] (2.0,0.5) rectangle (2.5,1.0); 
      \node at (1.5,-0.6) {$\bm{R}$};
    \end{scope}
    
    \node at (3.75,1.5) {$\rightarrow$};
    
    \begin{scope}[shift={(4.5,0)}]
      \draw[matrix box] (0,0) rectangle (3,3);
      \fill[nz, fill=violet!50] (0.0,2.5) rectangle (0.5,3.0); 
      \fill[nz, fill=violet!50] (0.5,2.0) rectangle (1.0,2.5); 
      \fill[nz, fill=violet!50] (1.0,1.5) rectangle (1.5,2.0); 
      \fill[nz, fill=violet!50] (1.0,1.0) rectangle (1.5,1.5); 
      \fill[nz, fill=violet!50] (1.5,1.0) rectangle (2.0,1.5); 
      \fill[nz, fill=violet!50] (2.0,0.5) rectangle (2.5,1.0); 
      \fill[nz, fill=violet!50] (2.0,0.0) rectangle (2.5,0.5); 
      \fill[nz, fill=violet!50] (2.5,0.0) rectangle (3.0,0.5); 
      \node at (1.5,-0.6) {$\hat{\bm{C}}_{\rm res}$};
      \node at (3.3,3.3) {${-1}$};
    \end{scope}
    
    \begin{scope}[shift={(8.5,0)}]
      \draw[matrix box] (0,0) rectangle (3,3);
      \fill[nz, fill=teal!50] (1.0,1.5) rectangle (1.5,2.0); 
      \fill[nz, fill=teal!50] (1.5,1.0) rectangle (2.0,1.5); 
      \fill[nz, fill=teal!50] (2.0,0.5) rectangle (2.5,1.0); 
      \fill[nz, fill=teal!50] (2.5,0.0) rectangle (3.0,0.5); 
      \node at (1.5,-0.6) {$\hat{\bm{D}}_{\rm res}$};
    \end{scope}
    
    \begin{scope}[shift={(12.0,0)}]
      \draw[matrix box] (0,0) rectangle (3,3);
      \fill[nz, fill=violet!50] (0.0,2.5) rectangle (0.5,3.0); 
      \fill[nz, fill=violet!50] (0.5,2.0) rectangle (1.0,2.5); 
      \fill[nz, fill=violet!50] (1.0,1.5) rectangle (1.5,2.0); 
      \fill[nz, fill=violet!50] (1.5,1.5) rectangle (2.0,2.0); 
      \fill[nz, fill=violet!50] (1.5,1.0) rectangle (2.0,1.5); 
      \fill[nz, fill=violet!50] (2.0,0.5) rectangle (2.5,1.0); 
      \fill[nz, fill=violet!50] (2.5,0.5) rectangle (3.0,1.0); 
      \fill[nz, fill=violet!50] (2.5,0.0) rectangle (3.0,0.5); 
      \node at (1.5,-0.6) {$\hat{\bm{C}}_{\rm res}^*$};
      \node at (3.3,3.3) {${-1}$};
    \end{scope}
    
    \end{tikzpicture}
    
    \begin{tikzpicture}[scale=0.55]
    
    \tikzset{
      matrix box/.style={draw=black, thin},
      nz/.style={fill opacity=1.0},
    }
    
    \begin{scope}[shift={(0,0)}]
        \node at (1.2,2.3) {Partial};
        \node at (1.5,1.5) {Cholesky};
        \node at (1.6,0.8) {+ Vecchia};
    \end{scope}
    
    \node at (3.75,1.5) {$\rightarrow$};
    
    \begin{scope}[shift={(4.5,0)}]
      \draw[matrix box] (0,0) rectangle (3,3);
      \fill[nz, fill=violet!50] (0.0,2.5) rectangle (0.5,3.0); 
      \fill[nz, fill=violet!50] (0.0,2.0) rectangle (0.5,2.5); 
      \fill[nz, fill=violet!50] (0.5,2.0) rectangle (1.0,2.5); 
      \fill[nz, fill=violet!50] (0.0,1.5) rectangle (0.5,2.0); 
      \fill[nz, fill=violet!50] (0.5,1.5) rectangle (1.0,2.0); 
      \fill[nz, fill=violet!50] (1.0,1.5) rectangle (1.5,2.0); 
      \fill[nz, fill=violet!50] (0.0,1.0) rectangle (0.5,1.5); 
      \fill[nz, fill=violet!50] (0.5,1.0) rectangle (1.0,1.5); 
      \fill[nz, fill=violet!50] (1.0,1.0) rectangle (1.5,1.5); 
      \fill[nz, fill=violet!50] (1.5,1.0) rectangle (2.0,1.5); 
      \fill[nz, fill=violet!50] (0.0,0.5) rectangle (0.5,1.0); 
      \fill[nz, fill=violet!50] (0.5,0.5) rectangle (1.0,1.0); 
      \fill[nz, fill=violet!50] (2.0,0.5) rectangle (2.5,1.0); 
      \fill[nz, fill=violet!50] (0.0,0.0) rectangle (0.5,0.5); 
      \fill[nz, fill=violet!50] (0.5,0.0) rectangle (1.0,0.5); 
      \fill[nz, fill=violet!50] (2.0,0.0) rectangle (2.5,0.5); 
      \fill[nz, fill=violet!50] (2.5,0.0) rectangle (3.0,0.5); 
      \node at (1.5,-0.6) {$\hat{\bm{C}}$};
      \node at (3.3,3.3) {${-1}$};
    \end{scope}
    
    \begin{scope}[shift={(8.5,0)}]
      \draw[matrix box] (0,0) rectangle (3,3);
      \fill[nz, fill=teal!50] (0.0,2.5) rectangle (0.5,3.0); 
      \fill[nz, fill=teal!50] (0.5,2.0) rectangle (1.0,2.5); 
      \fill[nz, fill=teal!50] (1.0,1.5) rectangle (1.5,2.0); 
      \fill[nz, fill=teal!50] (1.5,1.0) rectangle (2.0,1.5); 
      \fill[nz, fill=teal!50] (2.0,0.5) rectangle (2.5,1.0); 
      \fill[nz, fill=teal!50] (2.5,0.0) rectangle (3.0,0.5); 
      \node at (1.5,-0.6) {$\hat{\bm{D}}$};
    \end{scope}
    
    \begin{scope}[shift={(12.0,0)}]
      \draw[matrix box] (0,0) rectangle (3,3);
      \fill[nz, fill=violet!50] (0.0,2.5) rectangle (0.5,3.0); 
      \fill[nz, fill=violet!50] (0.5,2.5) rectangle (1.0,3.0); 
      \fill[nz, fill=violet!50] (1.0,2.5) rectangle (1.5,3.0); 
      \fill[nz, fill=violet!50] (1.5,2.5) rectangle (2.0,3.0); 
      \fill[nz, fill=violet!50] (2.0,2.5) rectangle (2.5,3.0); 
      \fill[nz, fill=violet!50] (2.5,2.5) rectangle (3.0,3.0); 
      \fill[nz, fill=violet!50] (0.5,2.0) rectangle (1.0,2.5); 
      \fill[nz, fill=violet!50] (1.0,2.0) rectangle (1.5,2.5); 
      \fill[nz, fill=violet!50] (1.5,2.0) rectangle (2.0,2.5); 
      \fill[nz, fill=violet!50] (2.0,2.0) rectangle (2.5,2.5); 
      \fill[nz, fill=violet!50] (2.5,2.0) rectangle (3.0,2.5); 
      \fill[nz, fill=violet!50] (1.0,1.5) rectangle (1.5,2.0); 
      \fill[nz, fill=violet!50] (1.5,1.5) rectangle (2.0,2.0); 
      \fill[nz, fill=violet!50] (1.5,1.0) rectangle (2.0,1.5); 
      \fill[nz, fill=violet!50] (2.0,0.5) rectangle (2.5,1.0); 
      \fill[nz, fill=violet!50] (2.5,0.5) rectangle (3.0,1.0); 
      \fill[nz, fill=violet!50] (2.5,0.0) rectangle (3.0,0.5); 
      \node at (1.5,-0.6) {$\hat{\bm{C}}^*$};
      \node at (3.3,3.3) {${-1}$};
    \end{scope}
    
    \end{tikzpicture}
    
    \caption{First row: partial Cholesky accesses the gray entries of $\bm{A}$ to generate an approximation $\hat{\bm{A}}_{\rm part} = \hat{\bm{L}}_{\rm part} \hat{\bm{D}}_{\rm part} \hat{\bm{L}}_{\rm part}^*$.
    Second row: Vecchia accesses the gray entries of $\bm{R} = \bm{A} - \hat{\bm{A}}_{\rm part}$ to generate an approximation $\hat{\bm{A}}_{\rm res} = \hat{\bm{C}}_{\rm res}^{-1} \hat{\bm{D}}_{\rm res} \hat{\bm{C}}_{\rm res}^{-*}$. 
    Third row: partial Cholesky + Vecchia  yields an improved approximation $\hat{\bm{A}} = \hat{\bm{C}}^{-1} \hat{\bm{D}} \hat{\bm{C}}^{-*}$.
    \textbf{The sparsity pattern in row 3 is the union of the sparsity patterns in rows 1 and 2.}}
    \label{fig:crazy}
    \end{figure}

    \underline{Step 2: $\hat{\bm{A}}$ is a Vecchia approximation.}
    To show $\hat{\bm{A}}$ is a Vecchia approximation, we need to verify the following equalities for $i = 1, \ldots, n$:
    \begin{equation}
    \label{eq:to_confirm}
        (\hat{\bm{C}} \bm{A})(i, \mathsf{S}_i) = 0
        \quad \text{and}
        \quad (\hat{\bm{C}} \bm{A})(i, i) = \hat{\bm{D}}(i,i).
    \end{equation}
    First consider the indices $i \leq r$.
    Since $\bm{A}$ and $\hat{\bm{A}}_{\rm part}$ have the same first $r$ rows, we can write
    \begin{align*}
        (\hat{\bm{C}} \bm{A})(i, \cdot)
        &= (\hat{\bm{C}} \hat{\bm{A}}_{\rm part})(i, \cdot) \\
        &= (\hat{\bm{L}}_{11}^{-1} \hat{\bm{L}}_{11} \hat{\bm{D}}_{11})(i, \cdot) \begin{bmatrix}
            \hat{\bm{L}}_{11}^* & \hat{\bm{L}}_{21}^*
        \end{bmatrix} \\
        &= \hat{\bm{D}}(i,i) \begin{bmatrix}
            \hat{\bm{L}}_{11}(\cdot, i)^* & \hat{\bm{L}}_{21}(\cdot, i)^*
        \end{bmatrix}.
    \end{align*}
    Since the first $i - 1$ entries of $\hat{\bm{L}}_{11}(\cdot, i)$ are zero and the $i$th entry is one, the first $i - 1$ entries of $(\hat{\bm{C}} \bm{A})(i, \cdot)$ are zero and the $i$th entry is $\hat{\bm{D}}(i,i)$, confirming \cref{eq:to_confirm}.
    Next consider the indices $i \geq r + 1$.
    We make the calculation
    \begin{align*}
        (\hat{\bm{C}} \bm{A})(i, \cdot) 
        &= \hat{\bm{C}}(i, \cdot) (\hat{\bm{A}}_{\rm part} + \bm{R})
        \\
        &= \hat{\bm{C}}_{22}(i-r, \cdot) \begin{bmatrix} -\hat{\bm{L}}_{21} \hat{\bm{L}}_{11}^{-1} & \mathbf{I} \end{bmatrix} \Biggl(\begin{bmatrix} \hat{\bm{L}}_{11} \\ \hat{\bm{L}}_{21} \end{bmatrix} \hat{\bm{D}}_{11} \begin{bmatrix} \hat{\bm{L}}_{11}^* \hat{\bm{L}}_{21}^* \end{bmatrix} + \begin{bmatrix} \bm{0} & \bm{0} \\
        \bm{0} & \bm{R}_{22} \end{bmatrix}\Biggr) \\
        &= \hat{\bm{C}}_{22}(i-r, \cdot) \begin{bmatrix} \bm{0} & \bm{R}_{22} \end{bmatrix} \\
        &= \begin{bmatrix} \hat{\bm{C}}_{21}(i-r, \cdot) & \hat{\bm{C}}_{22}(i-r, \cdot) \end{bmatrix} \bm{R}.
    \end{align*}
    Since $\begin{bmatrix} \hat{\bm{C}}_{21}(i-r, \cdot) & \hat{\bm{C}}_{22}(i-r, \cdot) \end{bmatrix}$ is the $i$th row vector generated by the Vecchia approximation of $\bm{R}$, we confirm \cref{eq:to_confirm} and complete the proof.
\end{proof}

\subsection{Implications}
\label{sec:implications}

\Cref{thm:everything_vecchia} demonstrates that the partial Cholesky + Vecchia approach, which appeared in the previous papers \cite{zhao2024adaptive,cai2025posterior}, is secretly constructing a Vecchia approximation with the first $r$ indices included in the sparsity pattern.
We list two significant implications below.

First, because of the optimality of the Vecchia approximation (\cref{thm:optimality}), the partial Cholesky + Vecchia approach is \emph{theoretically optimal} in the sense of the Kaporin condition number.
Moreover, the partial Cholesky + Vecchia approach can be understood as a toolbox for applying Vecchia approximations in practice.
In the simplest case, we can build a partial Cholesky + Vecchia approximation with a trivial sparsity pattern: $\mathsf{Q}_i = \emptyset$ for $i = 1, \ldots, n$.
This approach reduces to a \emph{partial Cholesky + diagonal} approximation.
\begin{equation*}
    \hat{\bm{A}}
    = \hat{\bm{A}}_{\rm part} + \hat{\bm{A}}_{\rm res} 
    = \hat{\bm{A}}_{\rm part} + \operatorname{diag}(\bm{A} - \hat{\bm{A}}_{\rm part}).
\end{equation*}
We can also build an approximation with more nonzero entries included in the Vecchia residual component, thus improving the Kaporin condition number and improving the accuracy of linear algebra calculations.

Second, the partial Cholesky + Vecchia approach is a \emph{computationally efficient} way to construct a Vecchia approximation.
Suppose the partial Cholesky component has rank $r$, and the Vecchia residual component has bounded sparsity 
\begin{equation*}
\max_{1 \leq i \leq n} |\mathsf{Q}_i| = \mathcal{O}(r^{1/2}).
\end{equation*}
Then the partial Cholesky + Vecchia approximation can be constructed with $\mathcal{O}(r n)$ entry lookups and $\mathcal{O}(r^2 n)$ operations, which is much smaller than the conventional Vecchia construction cost of $\mathcal{O}(r^2 n)$ entry lookups and $\mathcal{O}(r^3 n)$ operations using \cref{alg:vecchia}.

\section{Kaporin optimality theory} \label{sec:optimality}

In this section, we ask and answer, ``In what way is the Vecchia approximation optimal?''
Vecchia optimality theory was previously developed in the papers \cite{vecchia1988estimation,kaporin1994new,axelsson2000sublinear,yeremin2000factorized,schafer2021sparse}.
Here, we review this optimality theory and push it in new directions while focusing specifically on the Kaporin condition number (\cref{def:kaporin}).
\Cref{vecchia_optimality} shows that the Vecchia approximation optimizes the Kaporin condition number.
Then \cref{sec:linear,sec:determinant} explore the implications for linear solves and determinant calculations, respectively.

\subsection{Vecchia optimality theorem} \label{vecchia_optimality}

Kaporin \cite[App.~A.3]{kaporin1994new} proved that the Vecchia approximation optimizes $\kappa_{\rm Kap}$ for any given sparsity pattern, assuming the target matrix $\bm{A}$ is strictly positive-definite.
The next result extends Kaporin's optimality theorem to any positive-semidefinite target matrix $\bm{A}$.
The proof is slightly long (three pages) and is presented in \cref{app:vecchia_proof}.

\begin{theorem}[Optimality of Vecchia]
\label{thm:optimality}
    For any positive-semidefinite matrix $\bm{A} \in \mathbb{C}^{n \times n}$, the Vecchia approximation $\hat{\bm{A}} = \bm{P} \hat{\bm{C}}^{-1} \hat{\bm{D}} \hat{\bm{C}}^{-*} \bm{P}^*$ is the inverse Cholesky approximation with permutation $\bm{P}$ and sparsity pattern $(\mathsf{S}_i)_{i=1}^n$ that achieves the smallest possible Kaporin condition number.
    If the Vecchia approximation has the same range as $\bm{A}$, the Kaporin condition number is
    \begin{equation}
    \label{eq:kaporin}
        \kappa_{\rm Kap} = \prod_{d_{\tilde{\bm{A}}}(\bm{e}_i, \operatorname{span}\{\bm{e}_j\}_{j < i}) > 0}
        \frac{d_{\tilde{\bm{A}}}\bigl(\bm{e}_i, \operatorname{span}\{\bm{e}_j\}_{j \in \mathsf{S}_i} \bigr)^2}{d_{\tilde{\bm{A}}}\bigl(\bm{e}_i, \operatorname{span}\{\bm{e}_j\}_{j < i} \bigr)^2},
        \qquad \text{for } \tilde{\bm{A}} = \bm{P}^* \bm{A} \bm{P}.
    \end{equation}
\end{theorem}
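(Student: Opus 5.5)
Take $\bm{P} = \mathbf{I}$ without loss of generality, and write any inverse Cholesky approximation as $\hat{\bm{A}} = \bm{C}^{-1}\bm{D}\bm{C}^{-*}$, where $\bm{C}$ is unit lower triangular with pattern $(\mathsf{S}_i)$ and $\bm{D}\geq 0$ is diagonal. The proof has three steps: reduce $\kappa_{\rm Kap}$ to a quantity that separates across rows, minimize row by row, and identify the minimizer with \cref{def:vecchia_approximation}.

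\textbf{Step 1: the range condition.} First characterize when $\hat{\bm{A}}$ and $\bm{A}$ have the same range, since otherwise $\kappa_{\rm Kap}=\infty$. Let
\[
\mathsf{I} = \bigl\{ i : d_{\bm{A}}(\bm{e}_i, \operatorname{span}\{\bm{e}_j\}_{j<i}) > 0 \bigr\}
\]
be the set of pivot indices of the exact (degenerate) Cholesky factorization, so that $|\mathsf{I}| = \operatorname{rank}(\bm{A}) = r$. I expect range equality to force $\bm{D}(i,i)=0$ for $i\notin\mathsf{I}$ and $\bm{D}(i,i)>0$ for $i\in\mathsf{I}$. In addition, each row $\bm{c}_i^* = \bm{C}(i,\cdot)$ with $i\notin\mathsf{I}$ must satisfy $\bm{A}\bm{c}_i = \bm{0}$ in an appropriate sense, or at least cannot affect the range.

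\textbf{Step 2: separation across rows.} With $\hat{\bm{A}}^+$ in hand, compute
\[
\operatorname{tr}(\bm{A}\hat{\bm{A}}^+) = \sum_{i\in\mathsf{I}} \frac{\bm{c}_i^*\bm{A}\bm{c}_i}{\bm{D}(i,i)}
\]
(a trace identity analogous to the invertible case $\hat{\bm{A}}^{-1} = \bm{C}^*\bm{D}^{-1}\bm{C}$). I also expect $\operatorname{vol}(\bm{A}\hat{\bm{A}}^+) = \operatorname{vol}(\bm{A}) / \prod_{i\in\mathsf{I}}\bm{D}(i,i)$. Here I would use that $\operatorname{vol}(\bm{A}) = \prod_{i\in\mathsf{I}} d_{\bm{A}}(\bm{e}_i,\operatorname{span}\{\bm{e}_j\}_{j<i})^2$, which is the product of the nonzero pivots of the exact Cholesky factorization. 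Set $x_i = \bm{c}_i^*\bm{A}\bm{c}_i / \bm{D}(i,i)$. Then
\[
\kappa_{\rm Kap} = \frac{\bigl(\tfrac1r\sum_{i\in\mathsf{I}} x_i\bigr)^r}{\prod_{i\in\mathsf{I}} x_i}\cdot\frac{\prod_{i\in\mathsf{I}}\bm{c}_i^*\bm{A}\bm{c}_i}{\operatorname{vol}(\bm{A})}.
\]
By AM--GM the first factor is at least $1$, with equality when all the $x_i$ are equal. That factor can always be made equal to $1$ by rescaling $\bm{D}$, and rescaling $\bm{D}$ does not change the second factor.

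\textbf{Step 3: row-by-row minimization.} The remaining task is to minimize each $\bm{c}_i^*\bm{A}\bm{c}_i$ separately, subject to $\bm{c}_i(i)=1$ and support in $\mathsf{S}_i\cup\{i\}$. The minimum value is $d_{\bm{A}}(\bm{e}_i,\operatorname{span}\{\bm{e}_j\}_{j\in\mathsf{S}_i})^2$. The first-order conditions of this least-squares problem are exactly \cref{eq:formula}, and the minimal value equals $\hat{\bm{D}}(i,i)$, so the Vecchia approximation achieves the minimum. Its $x_i$ are then all equal to $1$, which gives \cref{eq:kaporin}. The argument also has to cover the case where the Vecchia approximation itself fails the range condition. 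In that case I would argue that every admissible approximation fails it too, so every candidate has $\kappa_{\rm Kap}=\infty$ and Vecchia is trivially optimal.

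\textbf{Main obstacle.} The hard part is the semidefinite bookkeeping in Steps 1–2: computing the pseudoinverse and the volume when $\bm{D}$ has zeros. I would handle it by choosing an orthonormal basis $\bm{U}$ of $\operatorname{range}(\bm{A})$ and reducing to the $r\times r$ positive-definite compressions $\bm{U}^*\bm{A}\bm{U}$ and $\bm{U}^*\hat{\bm{A}}\bm{U}$. Alternatively, I would use a continuity argument with $\bm{A}+\varepsilon\mathbf{I}$ and take $\varepsilon\to0$, tracking which pivots vanish.
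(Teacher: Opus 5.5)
Your plan follows the paper's proof closely. Like the paper, you restrict to approximations with the same range as $\bm A$, express $\operatorname{tr}$ and $\operatorname{vol}$ of $\bm A\hat{\bm A}^+$ row by row, optimize the diagonal, and then solve a least-squares problem in each row. Your AM--GM factorization is a cleaner treatment of the diagonal than the paper's logarithmic-derivative computation, because it gives a global inequality rather than a stationary point, and your trace identity is correct.

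The gap is the volume bookkeeping in Step 2, which is exactly where the semidefinite case departs from Kaporin's. Both volume identities you state hold for invertible matrices but fail for rank-deficient $\bm A$. Take $\bm A=(\bm e_1+\bm e_2)(\bm e_1+\bm e_2)^*$ with $\mathsf S_2=\{1\}$ and $\hat{\bm A}=\bm A$, so that $\bm C(2,1)=-1$, $\bm D=\operatorname{diag}(1,0)$ and $\mathsf I=\{1\}$.
\begin{itemize}
\item $\operatorname{vol}(\bm A)=2$, but the only nonzero pivot is $1$.
\item $\operatorname{vol}(\bm A\hat{\bm A}^+)=1$, since $\bm A\hat{\bm A}^+$ is an orthogonal projector, but your formula gives $2$.
\item Inserting the true $\operatorname{vol}(\bm A)$ into your Step 2 expression would give $\kappa_{\rm Kap}=1/2<1$.
\end{itemize}
The two identities are off by the same factor, so the errors cancel and your final formula is right. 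Neither step can be proved as written, however.

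The missing idea is that the volume of a singular matrix carries a factor determined by its range, and this factor cancels between $\bm A$ and $\hat{\bm A}$ precisely because their ranges coincide. Write $\bm A=\bm C_{\bm A}^{-1}\bm D_{\bm A}\bm C_{\bm A}^{-*}$ for the exact factorization, let $\bm Q$ be an orthonormal basis of the range, and set $\bm R=\mathbf I(\cdot,\mathsf I)$. The paper's volume lemma gives $\operatorname{vol}(\bm A)=\operatorname{vol}(\bm D_{\bm A})/|\det(\bm R^*\bm Q)|^2$, and the same holds for $\hat{\bm A}$ with the same $\bm Q$ and $\bm R$. Hence
$\operatorname{vol}(\bm A\hat{\bm A}^+)=\prod_{i\in\mathsf I}d_{\bm A}(\bm e_i,\operatorname{span}\{\bm e_j\}_{j<i})^2\big/\prod_{i\in\mathsf I}\bm D(i,i)$, which is the identity you actually need.

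An equivalent route: let $\bm V$ and $\hat{\bm V}$ be the columns indexed by $\mathsf I$ of $\bm C_{\bm A}^{-1}$ and $\bm C^{-1}$. Equal ranges give $\bm V=\hat{\bm V}\bm T$, and comparing rows in $\mathsf I$, where both blocks are unit lower triangular, gives $\det\bm T=1$. Your compression via $\bm U^*\bm A\bm U$ and $\bm U^*\hat{\bm A}\bm U$ works once you keep the common factor $|\det(\bm U^*\hat{\bm V})|^2$ instead of equating $\det(\bm U^*\hat{\bm A}\bm U)$ with $\prod_{i\in\mathsf I}\bm D(i,i)$.

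Do not fall back on $\bm A+\varepsilon\mathbf I$. Take $\bm A=\bm v\bm v^*$ with $\bm v=\bm e_1+\bm e_2+\bm e_3$, $\mathsf S_2=\{1\}$ and $\mathsf S_3=\{2\}$. The Vecchia approximation reproduces $\bm A$ exactly, so $\kappa_{\rm Kap}=1$. Yet the Kaporin numbers of the Vecchia approximations of $\bm A+\varepsilon\mathbf I$ tend to $4/3$, because ratios of vanishing pivots at indices outside $\mathsf I$ need not tend to $1$.

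Finally, the Step 1 facts you leave as expectations do hold and have short proofs. Unit triangularity of $\bm C_{\bm A}\bm C^{-1}$ gives $\bm D(i,i)=0$ for $i\notin\mathsf I$. Then row $i$ of $\bm C\hat{\bm A}=\bm D\bm C^{-*}$ vanishes, so $\bm c_i\in\operatorname{null}(\bm A)$. This is what your closing argument about the Vecchia approximation failing the range condition requires.
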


\Cref{thm:optimality} has three important implications.
First, suppose there is an \emph{exact} inverse Cholesky decomposition with the given sparsity pattern $(\mathsf{S}_i)_{i=1}^n$.
Then, the Vecchia approximation achieves the best possible Kaporin condition number $\kappa_{\rm Kap} = 1$.
In this case, the Vecchia definition ensures that $\hat{\bm{A}} = \bm{A}$ and perfect recovery occurs.

Second, when perfect recovery is not possible, \cref{thm:optimality} provides an explicit expression for the Kaporin condition number \cref{eq:kaporin}.
The proof shows how can write \cref{eq:kaporin} in several ways.
\begin{equation*}
    \kappa_{\rm Kap} = \frac{\operatorname{vol} (\hat{\bm{A}})}{\operatorname{vol} (\bm{A})}
    = \frac{\operatorname{vol} (\hat{\bm{D}})}{\operatorname{vol} (\bm{D})}
    = \prod_{\bm{D}(i,i) > 0} \frac{\hat{\bm{D}}(i,i)}{\bm{D}(i,i)},
\end{equation*}
where $\bm{A} = \bm{P} \bm{C}^{-1} \bm{D} \bm{C}^{-*} \bm{P}^*$ is an exact inverse Cholesky decomposition of the target matrix.
Further, the diagonal entries $\bm{D}(i,i)$ and $\hat{\bm{D}}(i,i)$ equal the square $\tilde{\bm{A}}$-weighted distances
\begin{equation*}
    \hat{\bm{D}}(i,i) = d_{\tilde{\bm{A}}}(\bm{e}_i, \operatorname{span}\{\bm{e}_j\}_{j \in \mathsf{S}_i})^2
    \quad \text{and} \quad
    \bm{D}(i,i) = d_{\tilde{\bm{A}}}(\bm{e}_i, \operatorname{span}\{\bm{e}_j\}_{j < i})^2.
\end{equation*}
The square $\tilde{\bm{A}}$-weighted distances are important because they motivate several optimization strategies for minimizing the Kaporin condition number that we will pursue in \cref{sec:optimization}.

Last, the proof of \cref{thm:optimality} shows how to avoid the bad case where $\kappa_{\rm Kap} = \infty$.
Specifically, for each ``bad'' index $i$ that satisfies
\begin{equation*}
    d_{\tilde{\bm{A}}}(\bm{e}_i, \operatorname{span}\{\bm{e}_j\}_{j < i}) = 0,
\end{equation*}
we must choose a sparsity index set $\mathsf{S}_i$ large enough to satisfy
\begin{equation}
\label{eq:bad}
    d_{\tilde{\bm{A}}}(\bm{e}_i, \operatorname{span}\{\bm{e}_j\}_{j \in \mathsf{S}_i}) = 0.
\end{equation}
If \cref{eq:bad} is violated,
then it becomes impossible to generate a sparse inverse Cholesky approximation with the same range as $\bm{A}$.
Thankfully, these bad indices only exist when $\bm{A}$ is rank-deficient --- in the full-rank case the Vecchia approximation always results in $\kappa_{\rm Kap} < \infty$.

\subsection{Linear solves} \label{sec:linear}
Now we show how to use a factored matrix approximation $\hat{\bm{A}}$ to accelerate linear algebra calculations, and the performance is bounded in terms of $\kappa_{\rm Kap}$.

In order to solve a linear system $\bm{A} \bm{x} = \bm{b}$ or a more general least-squares problem $\min_{\bm{x}} \lVert \bm{A} \bm{x} - \bm{b} \rVert^2$, we can make an initial guess $\bm{x}_0$ and refine our guess by calculating
\begin{equation*}
    \hat{\bm{x}} = \bm{x}_0 + \hat{\bm{A}}^+[\bm{b} - \bm{A} \bm{x}_0].
\end{equation*}
This calculation can be very fast.
In the case $\bm{x}_0 = \bm{0}$, we do not even need to examine the entries of $\bm{A}$ once; we only need to perform a linear solve with the factored approximation $\hat{\bm{A}}$.
We believe the following proposition gives a new error bound for the approximate direct solve, and the proof appears in \cref{sec:direct}.

\begin{proposition}[Approximate direct solver for linear systems] \label{prop:direct}
    If $\hat{\bm{A}}$ is normalized so that $\operatorname{tr}\bigl(\bm{A} \hat{\bm{A}}^+\bigr) = \operatorname{rank}(\bm{A})$, then $\hat{\bm{x}} = \bm{x}_0 + \hat{\bm{A}}^+[\bm{b} - \bm{A} \bm{x}_0]$ satisfies
    \begin{equation*}
        \frac{\lVert \hat{\bm{x}} - \bm{x}_{\star} \rVert_{\bm{A}}^2}{ \lVert \bm{x}_0 - \bm{x}_{\star} \rVert_{\bm{A}}^2}
        \leq 2 \operatorname{rank}(\bm{A}) \log(\kappa_{\rm Kap}).
    \end{equation*}
    Here, $\bm{x}_{\star} = \bm{A}^+ \bm{b}$ is the minimum--norm solution to $\min_{\bm{x}} \lVert \bm{A} \bm{x} - \bm{b} \rVert^2$.
\end{proposition}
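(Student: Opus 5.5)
The plan is to reduce the error ratio to a scalar inequality about the positive eigenvalues of $\bm{A}\hat{\bm{A}}^+$, and then control that inequality with $\log(\kappa_{\rm Kap})$.

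\emph{Reduction to an eigenvalue problem.} If $\bm{A}$ and $\hat{\bm{A}}$ have different ranges, then $\kappa_{\rm Kap}=\infty$ and there is nothing to prove. So assume the ranges agree, and write $r=\operatorname{rank}(\bm{A})$. Decompose $\bm{b}-\bm{A}\bm{x}_0 = \bm{A}(\bm{x}_\star-\bm{x}_0) + (\bm{b}-\bm{A}\bm{x}_\star)$. The second term is orthogonal to $\operatorname{range}(\bm{A})=\operatorname{range}(\hat{\bm{A}})$, so $\hat{\bm{A}}^+$ annihilates it. Hence
\begin{equation*}
\hat{\bm{x}}-\bm{x}_\star = (\mathbf{I}-\hat{\bm{A}}^+\bm{A})(\bm{x}_0-\bm{x}_\star).
\end{equation*}
Set $\bm{y}=\bm{A}^{1/2}(\bm{x}_0-\bm{x}_\star)\in\operatorname{range}(\bm{A})$ and $\bm{M}=\bm{A}^{1/2}\hat{\bm{A}}^+\bm{A}^{1/2}$. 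Then $\bm{A}^{1/2}(\hat{\bm{x}}-\bm{x}_\star)=(\mathbf{I}-\bm{M})\bm{y}$. The matrix $\bm{M}$ is positive-semidefinite with range $\operatorname{range}(\bm{A})$. Its positive eigenvalues $\lambda_1,\ldots,\lambda_r$ coincide with those of $\bm{A}\hat{\bm{A}}^+$. Since $\bm{y}$ lies in the span of the corresponding eigenvectors, the error ratio is at most $\max_i(1-\lambda_i)^2$.

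\emph{Rewriting the Kaporin number.} The normalization gives $\sum_i\lambda_i=r$, so the arithmetic mean of the $\lambda_i$ equals $1$. Therefore
\begin{equation*}
\log(\kappa_{\rm Kap}) = -\sum_{i=1}^r\log\lambda_i=\sum_{i=1}^r\varphi(\lambda_i),
\qquad \varphi(\lambda)=\lambda-1-\log\lambda\ge 0,
\end{equation*}
where the second equality uses $\sum_i(\lambda_i-1)=0$. It remains to show $(1-\lambda_k)^2\le 2r\sum_i\varphi(\lambda_i)$ for every $k$.

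\emph{Case $\lambda_k\le 1$.} Expand $-\log\lambda_k=\sum_{m\ge1}(1-\lambda_k)^m/m$, in which every term is nonnegative. Keeping the first two terms gives $\varphi(\lambda_k)\ge(1-\lambda_k)^2/2$, which already suffices.

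\emph{Case $\lambda_k=1+t$ with $t>0$.} This is the main obstacle, because $\varphi$ grows only linearly for large $\lambda$, so the single term $\varphi(\lambda_k)$ cannot control $t^2$. I would use the trace constraint instead. If $r=1$, then $\lambda_1=1$ and there is nothing to prove. Otherwise the other $r-1$ eigenvalues have mean $1-t/(r-1)$, which lies in $[0,1)$. By convexity of $\varphi$ (Jensen) followed by the bound from the previous case,
\begin{equation*}
\sum_{i\ne k}\varphi(\lambda_i)\ \ge\ (r-1)\,\varphi\bigl(1-\tfrac{t}{r-1}\bigr)\ \ge\ \frac{t^2}{2(r-1)}.
\end{equation*}
Hence $2r\sum_i\varphi(\lambda_i)\ge \frac{r}{r-1}t^2\ge t^2$.

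Combining the two cases gives $\max_k(1-\lambda_k)^2\le 2r\log(\kappa_{\rm Kap})$, which proves \cref{prop:direct}.
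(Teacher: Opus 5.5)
Your proof is correct. It shares the paper's skeleton: reduce to the positive eigenvalues $\lambda_i$ of $\bm{A}\hat{\bm{A}}^+$ with $\sum_i \lambda_i = r$, then use the trace constraint through convexity on the remaining $r-1$ eigenvalues. The final scalar inequality, however, is handled differently.

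The paper treats every $\lambda_i$ uniformly. Jensen gives $\sum_j \log\lambda_j \le \log\lambda_i + (r-1)\log\frac{r-\lambda_i}{r-1}$, and the second term is bounded only linearly via $\log(1+x)\le x$. All of the quadratic gain comes from a tailored concave majorant $\log x \le x-1-\frac{1}{2r}(1-x)^2$, valid on $(0,r]$. That majorant needs its own verification, and it is where the factor $r$ enters.

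You instead write $\log\kappa_{\rm Kap}=\sum_i\varphi(\lambda_i)$ as a sum of nonnegative divergences, which lets you discard terms freely and split into cases. When $\lambda_k\le 1$, the single term $\varphi(\lambda_k)$ suffices. When $\lambda_k>1$, you drop $\varphi(\lambda_k)$ and take the quadratic gain from the other eigenvalues, whose mean is pushed below $1$.

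Your route needs only the elementary bound $\varphi(1-s)\ge s^2/2$. It actually yields a sharper, asymmetric estimate: $(1-\lambda_k)^2\le 2\log\kappa_{\rm Kap}$ when $\lambda_k\le 1$, and $(\lambda_k-1)^2\le 2(r-1)\log\kappa_{\rm Kap}$ when $\lambda_k>1$. The paper's route buys a single case-free chain of inequalities at the cost of the uniform constant $2r$.

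Your reduction step is also slightly more careful than the paper's. The paper bounds by the full operator norm $\lVert \mathbf{I}-\bm{A}^{1/2}\hat{\bm{A}}^+\bm{A}^{1/2}\rVert$ and equates it with $\max_i|1-\lambda_i|$. That overlooks the eigenvalue $1$ on $\operatorname{null}(\bm{A})$ when $\bm{A}$ is rank-deficient. Your observation that $\bm{y}=\bm{A}^{1/2}(\bm{x}_0-\bm{x}_\star)$ lies in $\operatorname{range}(\bm{A})$ is exactly what makes that step valid.
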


The normalization assumption in \cref{prop:direct} always holds for the Vecchia approximation when $\kappa_{\rm Kap} < \infty$.
Yet unfortunately, \cref{prop:direct} does not guarantee any improvement over the initial guess $\bm{x}_0$ unless the Kaporin condition number is extremely small, $\log(\kappa_{\rm Kap}) < 1/\operatorname{rank}(\bm{A})$.

When the approximate direct solver fails to deliver an accurate solution to a linear system, we can instead solve the linear system iteratively using preconditioned conjugate gradient (PCG, \cite[Sec.~11.5]{golub2013matrix}).
Starting with an initial guess $\bm{x}_0$, PCG calculates the initial residual $\bm{r}_0 = \bm{b} - \bm{A} \bm{x}_0$ and initial search direction $\bm{d}_1 = \hat{\bm{A}}^+ \bm{r}_0$.
At each step $t = 1, 2, \ldots$, PCG updates the iterate and residual according to
\begin{equation*}
    \begin{cases}
        \bm{x}_t = \bm{x}_{t-1} + \alpha_t \bm{d}_t, \\
        \bm{r}_t = \bm{r}_{t-1} - \alpha_t \bm{A} \bm{d}_t.
    \end{cases}
\end{equation*}
The step size and search direction are selected according to
\begin{equation*}
    \alpha_t = \frac{\bm{r}_{t-1}^* \hat{\bm{A}}^+ \bm{r}_{t-1}}{\bm{d}_i^* \bm{A} \bm{d}_i}
    \quad \text{and} \quad
    \bm{d}_{t+1} = \hat{\bm{A}}^+ \bm{r}_t + \frac{\bm{r}_t^* \hat{\bm{A}}^+ \bm{r}_t}{\bm{r}_{t-1}^* \hat{\bm{A}}^+ \bm{r}_{t-1}} \bm{d}_t.
\end{equation*}
We can run PCG for any number of iterations, producing better and better estimates of $\bm{x}_{\star} = \bm{A}^+ \bm{b}$ with each iteration. 
PCG requires just one multiplication with $\bm{A}$ and one linear solve with $\hat{\bm{A}}$ per iteration.

Axelsson and Kaporin \cite[Thm.~4.3]{axelsson2000sublinear} bounded the error of PCG in terms of $\kappa_{\rm Kap}$, assuming an even number of iterations, and \cref{sec:pcg_proof} extends their proof to handle any even or odd number of iterations.
\begin{proposition}[Convergence of PCG \cite{axelsson2000sublinear}] \label{prop:pcg}
At each iteration $t \geq 0$, the iterates produced by preconditioned conjugate gradient satisfy
\begin{equation}
\label{eq:superlinear}
    \frac{\lVert \bm{x}_t - \bm{x}_{\star} \rVert_{\bm{A}}^2}{\lVert \bm{x}_0 - \bm{x}_{\star} \rVert_{\bm{A}}^2} \leq \biggl[\frac{3 \log(\kappa_{\rm Kap})}{t}\biggr]^t.
\end{equation}
Here, $\bm{x}_{\star} = \bm{A}^+ \bm{b}$ is the minimum--norm solution to $\min_{\bm{x}} \lVert \bm{A} \bm{x} - \bm{b} \rVert^2$.
\end{proposition}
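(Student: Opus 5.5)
The plan is to reduce the bound to a statement about the eigenvalues of the preconditioned operator, and then to feed a carefully chosen polynomial into the standard polynomial optimality property of PCG. First the trivial cases. If $\kappa_{\rm Kap} = \infty$ there is nothing to prove. Since PCG never increases the $\bm{A}$-norm error, the ratio in \cref{eq:superlinear} is always at most $1$, so we may also assume $3\log(\kappa_{\rm Kap}) < t$.

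Otherwise $\bm{A}$ and $\hat{\bm{A}}$ share a range $\mathcal{R}$ of dimension $r = \operatorname{rank}(\bm{A})$. Let $0 < \lambda_1 \leq \cdots \leq \lambda_r$ be the positive eigenvalues of $\bm{A}\hat{\bm{A}}^+$, equivalently of $\bm{B} = (\hat{\bm{A}}^+)^{1/2} \bm{A} (\hat{\bm{A}}^+)^{1/2}$ restricted to $\mathcal{R}$. On $\mathcal{R}$, PCG is ordinary CG applied to $\bm{B}$. The component of $\bm{x}_0$ outside $\mathcal{R}$ is never updated, and it does not affect $\bm{A}$-norms, so it is harmless for the minimum-norm target $\bm{x}_\star$.

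The standard Krylov optimality of CG then gives
\[
\frac{\lVert \bm{x}_t - \bm{x}_\star\rVert_{\bm{A}}^2}{\lVert \bm{x}_0 - \bm{x}_\star\rVert_{\bm{A}}^2} \leq \min_{\deg p \leq t,\; p(0) = 1} \; \max_{1 \leq i \leq r} p(\lambda_i)^2 .
\]
Both sides of \cref{eq:superlinear} are invariant under rescaling $\hat{\bm{A}} \mapsto c\hat{\bm{A}}$, since PCG iterates and $\kappa_{\rm Kap}$ are unchanged. We may therefore normalize $\sum_i \lambda_i = r$. With this normalization,
\[
L := \log(\kappa_{\rm Kap}) = \sum_{i=1}^r \bigl(\lambda_i - 1 - \log \lambda_i\bigr),
\]
a sum of nonnegative terms $f(\lambda_i)$ with $f(\lambda) = \lambda - 1 - \log\lambda$.

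Next we choose the polynomial. Let $J$ be the index set of the $t$ eigenvalues with the largest values of $f$ (all of them if $t \geq r$, which gives error zero). Take
\[
p(\lambda) = \prod_{j \in J} \bigl(1 - \lambda/\lambda_j\bigr).
\]
This kills the error in the $J$ directions, and for $i \notin J$ we have $f(\lambda_i) \leq L/t$ because $\sum_{j \in J} f(\lambda_j) \leq L$. It remains to bound $\prod_{j\in J} (1-\lambda_i/\lambda_j)^2$ for the remaining $\lambda_i$. Following Axelsson and Kaporin, split $J$ into roots above and below the remaining spectrum and apply AM--GM to the logarithms. Roughly, one bounds $|1-\lambda_i/\lambda_j|$ by a function of $f(\lambda_j)$ and $f(\lambda_i)$ and sums via concavity of $\log$, aiming for the product bound
\[
\prod_{j\in J}\Bigl(1-\frac{\lambda_i}{\lambda_j}\Bigr)^2 \leq \Bigl(\frac{c\sum_{j\in J}f(\lambda_j) + c' t f(\lambda_i)}{t}\Bigr)^{t} \leq \Bigl(\frac{3L}{t}\Bigr)^t .
\]

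I expect the main obstacle to be this per-factor inequality for arbitrary $t$. The original argument pairs a large root with a small root, which is exactly why it needs $t$ even. For odd $t$ I would first try keeping the even-case construction for $t-1$ roots, so that $2s = t-1$, and adding one extra root at the single remaining eigenvalue with largest $f$. The odd leftover factor is then controlled by a one-sided estimate $(1-\lambda/\mu)^2 \leq 3 \max(f(\lambda), f(\mu))$, valid in the relevant regime $f \leq 1$. The constants then need to be checked so that they combine through AM--GM into $3L/t$ rather than $3L/(t-1)$. If that direct route is too lossy, the fallback is to interpolate between the even bounds at $t-1$ and $t+1$ using log-convexity of the CG error sequence, which is a consequence of the Lanczos/Gauss quadrature representation.
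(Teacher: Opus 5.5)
Your reduction is sound: restricting to the common range, bounding the error by $\min_{p(0)=1}\max_i p(\lambda_i)^2$, normalizing $\sum_i\lambda_i=r$, and writing $L=\sum_i f(\lambda_i)$ are all correct. The argument breaks at the choice of polynomial. Placing the roots at the $t$ eigenvalues with the largest $f$ does not give $(3L/t)^t$ for either parity of $t$, because nothing forces those roots to straddle the remaining spectrum. Take $r=2t$, with $t$ eigenvalues in a tight cluster at $1-\delta$ and $t$ in a tight cluster at $1+\delta$, where $0<\delta\le 1/2$. Then $\sum_i\lambda_i=r$ and $L=-t\log(1-\delta^2)$, so $3L<t$ and the bound is nonvacuous. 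Since $f(1-\delta)>f(1+\delta)$, your $J$ is the lower cluster. At a remaining eigenvalue $\lambda\approx 1+\delta$, your polynomial gives $p(\lambda)^2\approx\bigl(2\delta/(1-\delta)\bigr)^{2t}$, whereas the target is $(3L/t)^t=\bigl(-3\log(1-\delta^2)\bigr)^t$. For small $\delta$ this is roughly $(4\delta^2)^t$ versus $(3\delta^2)^t$; at $\delta=1/2$ it is $4^t$ versus about $0.86^t$. So this is not a matter of tuning constants: roots lying below the remaining spectrum produce factors $(\lambda_i/\lambda_j-1)^2$ that can exceed $1$.

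The same example with $t=1$ refutes your odd-case estimate. Taking $\mu=1-\delta$ and $\lambda=1+\delta$ gives $(1-\lambda/\mu)^2\approx 4\delta^2$, but $3\max(f(\lambda),f(\mu))\approx 3\delta^2/2$. The fallback is also unavailable, because CG errors are not log-convex in $t$. With $r$ distinct eigenvalues and a generic start, $\lVert\bm{x}_{r-1}-\bm{x}_\star\rVert_{\bm{A}}>0=\lVert\bm{x}_{r}-\bm{x}_\star\rVert_{\bm{A}}$, and superlinear convergence is exactly the opposite of log-convexity.

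The missing idea is a balanced, paired choice of roots. Order the eigenvalues as $\lambda_1\ge\cdots\ge\lambda_r$. The paper places roots at $\lambda_k$ and $\lambda_{r+1-k}$ for $k\le\lfloor t/2\rfloor$, so each pair $a_k<b_k$ brackets every remaining eigenvalue. Each pair then contributes at most $\max_{a_k\le\lambda\le b_k}\bigl|(1-\lambda/a_k)(1-\lambda/b_k)\bigr|=(b_k-a_k)^2/(4a_kb_k)$. For odd $t$, the extra root goes at the midpoint $(\lambda_{t_\star}+\lambda_{r+1-t_\star})/2$ of the remaining interval, not at an eigenvalue, and contributes at most $(b-a)/(2\sqrt{ab})$.

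Since $4ab/(a+b)^2+(b-a)^2/(a+b)^2=1$, weighted AM--GM (weights $2/t$, and $1/t$ for the midpoint factor) gives $(1+\rho^{2/t})^{t/2}\le\prod_k (a_k+b_k)^2/(4a_kb_k)$ for the unsquared error ratio $\rho$, with the midpoint pair entering with exponent $1/2$. Appending the remaining pairs (each factor is at least $1$) and applying AM--GM to the whole spectrum bounds this product by $\kappa_{\rm Kap}$. Hence $\rho\le(\kappa_{\rm Kap}^{2/t}-1)^{t/2}\le\bigl(3\log(\kappa_{\rm Kap})/t\bigr)^{t/2}$ once $\log(\kappa_{\rm Kap})\le t/3$. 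Pairing a root below the remaining spectrum with one above is exactly what ties each factor to that pair's own contribution $(a+b)^2/(4ab)$ to $\kappa_{\rm Kap}$; one-sided roots have no such control.
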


\Cref{prop:pcg} guarantees \emph{superlinear} convergence, which is even stronger than the standard conjugate gradient linear convergence bounds, e.g., \cite[Eq.~11.3.27]{golub2013matrix}.
In large-scale applications, the superlinear convergence is mainly observed in the latter iterations when PCG has already achieved high accuracy \cite{axelsson2000sublinear}.

\subsection{Determinant calculations} \label{sec:determinant}

When $\bm{A}$ and $\hat{\bm{A}}$ are strictly positive-definite, we can use $\det (\hat{\bm{A}})$ as an estimator for $\det (\bm{A})$.
This computation is very fast for the Vecchia approximation $\hat{\bm{A}} = \bm{P} \hat{\bm{C}}^{-1} \hat{\bm{D}} \hat{\bm{C}}^{-*} \bm{P}^*$, because
\begin{equation*}
\det (\hat{\bm{A}}) = \det (\hat{\bm{D
}} )= \prod_{i=1}^n \hat{\bm{D}}(i,i).
\end{equation*}
The next proposition exactly describes the error in the determinant estimate.
\begin{proposition}[Approximate direct solver for determinants] \label{prop:det}
    If $\bm{A}$ and $\hat{\bm{A}}$ are strictly positive-definite and $\hat{\bm{A}}$ is normalized so that $\operatorname{tr}\bigl(\bm{A} \hat{\bm{A}}^{-1}\bigr) = n$, then
    \begin{equation*}
        \log\biggl(\frac{\det \hat{\bm{A}}}{\det \bm{A}}\biggr)
        = \log(\kappa_{\mathrm{Kap}}).
    \end{equation*}
\end{proposition}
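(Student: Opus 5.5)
The proof is a direct computation with the eigenvalues of $\bm{A}\hat{\bm{A}}^{-1}$. Let $\lambda_1, \ldots, \lambda_n > 0$ denote these eigenvalues; they are positive because $\bm{A}\hat{\bm{A}}^{-1}$ is similar to the positive-definite matrix $\hat{\bm{A}}^{-1/2}\bm{A}\hat{\bm{A}}^{-1/2}$. Both matrices are strictly positive-definite, so they share the range $\mathbb{C}^n$, $\operatorname{rank}(\bm{A}) = n$, and $\hat{\bm{A}}^+ = \hat{\bm{A}}^{-1}$. Hence \cref{def:kaporin} gives a finite value,
\begin{equation*}
    \kappa_{\rm Kap} = \frac{\bigl(\frac{1}{n}\operatorname{tr}(\bm{A}\hat{\bm{A}}^{-1})\bigr)^n}{\operatorname{vol}(\bm{A}\hat{\bm{A}}^{-1})},
\end{equation*}
where the volume is just $\prod_i \lambda_i = \det(\bm{A}\hat{\bm{A}}^{-1})$ because every eigenvalue is positive.

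Next we apply the normalization $\operatorname{tr}(\bm{A}\hat{\bm{A}}^{-1}) = n$. It makes the numerator $(n/n)^n = 1$, so $\kappa_{\rm Kap} = 1/\det(\bm{A}\hat{\bm{A}}^{-1})$. Multiplicativity of the determinant then gives
\begin{equation*}
    \kappa_{\rm Kap} = \frac{\det \hat{\bm{A}}}{\det \bm{A}},
\end{equation*}
and taking logarithms yields the claim. Since both determinants are positive, the logarithm is well defined.

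No step here is a genuine obstacle. The only points to state carefully are that the range condition in \cref{def:kaporin} holds automatically for strictly positive-definite matrices, and that the volume coincides with the determinant. It is also worth remarking that the normalization is harmless. Rescaling $\hat{\bm{A}} \mapsto c\hat{\bm{A}}$ leaves $\kappa_{\rm Kap}$ unchanged, since it is homogeneous of degree zero in $\hat{\bm{A}}$, and for the Vecchia approximation the normalization already holds, as noted after \cref{prop:direct}. So the identity applies directly to Vecchia, giving $\log(\kappa_{\rm Kap}) = \log\bigl(\prod_i \hat{\bm{D}}(i,i)/\det\bm{A}\bigr)$, consistent with the volume-ratio formula following \cref{thm:optimality}.
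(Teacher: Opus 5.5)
Your proof is correct and is the paper's own argument: under the normalization $\operatorname{tr}(\bm{A}\hat{\bm{A}}^{-1}) = n$, the numerator of $\kappa_{\rm Kap}$ equals one, so $\kappa_{\rm Kap} = 1/\det(\bm{A}\hat{\bm{A}}^{-1}) = \det(\hat{\bm{A}})/\det(\bm{A})$. Your extra checks are accurate and make explicit what the paper leaves implicit: the range condition holds automatically, the volume equals the determinant, and $\kappa_{\rm Kap}$ is invariant under rescaling $\hat{\bm{A}}$.
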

\begin{proof}
    We rewrite the Kaporin condition number as
    \begin{equation*}
        \kappa_{\rm Kap} = \frac{\bigl(\frac{1}{n} \operatorname{tr} ( \bm{A} \hat{\bm{A}}^{-1})\bigr)^{n}}{\det (\bm{A} \hat{\bm{A}}^{-1})} 
        = \frac{1}{\det(\bm{A} \hat{\bm{A}}^{-1})} = \frac{\det(\hat{\bm{A}})}{\det(\bm{A})},
    \end{equation*}
    using the trace normalization $\operatorname{tr}\bigl(\bm{A} \hat{\bm{A}}^{-1}\bigr) = n$.
\end{proof}

The normalization assumption in \cref{prop:det} always holds for the Vecchia approximation if $\bm{A}$ is strictly positive-definite.
Therefore, the Vecchia determinant upper bounds the true determinant of $\bm{A}$.
Note, however, the Vecchia approximation does not always provide the sharpest upper bound for the determinant of $\bm{A}$ given the revealed entries:
the sharpest upper bound is given by the solution to the \emph{maximum entropy} problem; see \cite[pg.~161]{dempster1972}.

When the approximation $\det \bm{A} \approx \det \hat{\bm{A}}$ is not sufficiently accurate, we can refine this approximation with a multiplicative correction term.
To that end, we write
\begin{equation*}
    \log\biggl(\frac{\det \bm{A}}{\det\hat{\bm{A}}}\biggr)
    = \log\bigl(\det \bigl(\hat{\bm{A}}^{-1/2} \bm{A}\hat{\bm{A}}^{-1/2} \bigr) \bigr)
    = \operatorname{tr}\bigl(\log \bigl(\hat{\bm{A}}^{-1/2} \bm{A} \hat{\bm{A}}^{-1/2} \bigr) \bigr).
\end{equation*}
Here, the matrix logarithm is defined by taking the logarithm of a matrix's eigenvalues.
We can approximate the trace of the matrix logarithm by sampling independent length-$\sqrt{n}$ vectors with uniformly random directions, $\bm{u}_1, \ldots, \bm{u}_t$, and forming a \emph{stochastic trace estimator} \cite{ubaru2017fast}:
\begin{equation*}
    s_t = \frac{1}{t}\sum_{i=1}^t \bm{u}_i^* \log \bigl(\hat{\bm{A}}^{-1/2} \bm{A} \hat{\bm{A}}^{-1/2}\bigr) \bm{u}_i.
\end{equation*}
Assuming the matrix logarithm is calculated exactly, this estimator is unbiased and converges as $t \rightarrow \infty$.
The next new proposition bounds the mean square error, and the proof is in \cref{sec:det_proof}.
\begin{proposition}[Stochastic determinant estimation] \label{prop:stochastic_det}
    Suppose $\bm{A}$ and $\hat{\bm{A}}$ are strictly positive-definite $n \times n$ matrices with Kaporin condition number $\kappa_{\rm Kap} \leq {\rm e}^n$.
    Suppose $\bm{u}_1, \ldots, \bm{u}_t$ are independent vectors with length $\sqrt{n}$ and uniformly random directions, and set $s_t = \frac{1}{t}\sum_{i=1}^t \bm{u}_i^* \log \bigl(\hat{\bm{A}}^{-1/2} \bm{A} \hat{\bm{A}}^{-1/2}\bigr) \bm{u}_i$.
    Then
    \begin{equation}
    \label{eq:stochastic}
        \mathbb{E}\biggl|\log\biggl(
        \frac{{\rm e}^{s_t}\det \hat{\bm{A}}}{\det \bm{A}}
        \biggr)\biggr|^2
        \leq
        \frac{4 \log(\kappa_{\mathrm{Kap}})}{t}.
    \end{equation}
    if $\bm{u}_1, \ldots, \bm{u}_n$ are complex-valued. The same bound holds with $4$ replaced by $8$ if $\bm{u}_1, \ldots, \bm{u}_n$ are real-valued.
\end{proposition}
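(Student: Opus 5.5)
The plan is to reduce the bound to a deterministic inequality between the log-eigenvalues of $\bm{B} = \hat{\bm{A}}^{-1/2}\bm{A}\hat{\bm{A}}^{-1/2}$ and $\log(\kappa_{\rm Kap})$. A check on a simple example suggests that this inequality, and the stated bound \cref{eq:stochastic}, can fail, so the last step is where the plan stalls.

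\textbf{Step 1 (reduction to a single-sample variance).} Write the eigenvalues of $\bm{B}$ as $\lambda_1,\ldots,\lambda_n>0$ and set $\ell_i=\log\lambda_i$. We have $\log(\det\bm{A}/\det\hat{\bm{A}})=\operatorname{tr}\log\bm{B}=\sum_i\ell_i$. Hence the quantity inside the absolute value in \cref{eq:stochastic} is $s_t-\operatorname{tr}\log\bm{B}$. Since $\mathbb{E}[\bm{u}\bm{u}^*]=\mathbf{I}$, each summand $\bm{u}_i^*\log(\bm{B})\bm{u}_i$ is unbiased for $\operatorname{tr}\log\bm{B}$. The summands are independent, so the left-hand side equals $\operatorname{Var}(\bm{u}^*\log(\bm{B})\bm{u})/t$.

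\textbf{Step 2 (exact variance).} By unitary invariance we may diagonalize $\log\bm{B}$. Then $\bm{u}^*\log(\bm{B})\bm{u}=n\sum_i\ell_i|w_i|^2$, where $\bm{w}$ is uniform on the unit sphere.
\begin{itemize}
\item In the complex case $(|w_i|^2)_i$ is Dirichlet$(1,\ldots,1)$. This gives $\operatorname{Var}=\frac{n}{n+1}\sum_i(\ell_i-\bar\ell)^2$, where $\bar\ell$ is the mean of the $\ell_i$.
\item In the real case $(|w_i|^2)_i$ is Dirichlet$(\tfrac12,\ldots,\tfrac12)$. This gives $\operatorname{Var}=\frac{2n}{n+2}\sum_i(\ell_i-\bar\ell)^2$.
\end{itemize}
This accounts for the factor $2$ between the real and complex constants. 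It remains to show
\begin{equation*}
\sum_i(\ell_i-\bar\ell)^2\le 4\log(\kappa_{\rm Kap}).
\end{equation*}

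\textbf{Step 3 (deterministic inequality; the main obstacle).} Both sides are invariant under scaling $\bm{B}\mapsto c\bm{B}$, so we may normalize $\frac1n\sum_i\lambda_i=1$. Then
\begin{equation*}
\log\kappa_{\rm Kap}=\sum_i f(\ell_i), \qquad f(\ell)={\rm e}^{\ell}-1-\ell .
\end{equation*}
Using $\sum_i(\ell_i-\bar\ell)^2\le\sum_i\ell_i^2$, I would try a termwise comparison $\ell^2\le 4f(\ell)$. This holds for $\ell\ge0$, and on a bounded negative range.

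Here is the difficulty. For $\ell\to-\infty$, $f$ grows only linearly, so $\ell^2/f(\ell)\sim|\ell|$ is unbounded. The hypothesis $\kappa_{\rm Kap}\le{\rm e}^n$ only caps each $|\ell_i|$ at roughly $n$, which does not restore a uniform constant.

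A direct check indicates the target inequality itself can fail. Take $\lambda_1={\rm e}^{-m}$ and $\lambda_i=1$ otherwise, with $1\ll m\ll n$. Then $\log\kappa_{\rm Kap}\approx m-1$, which is well within $\kappa_{\rm Kap}\le{\rm e}^n$. Yet $\sum_i(\ell_i-\bar\ell)^2\approx m^2$. So the variance is about $m^2/t$, exceeding $4(m-1)/t$ once $m\gtrsim 3$.

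Before investing more effort I would therefore re-examine the statement. Two plausible corrected targets are:
\begin{itemize}
\item a bound of the form $C\log(\kappa_{\rm Kap})\max(1,\log\kappa_{\rm Kap})/t$, obtained from $\ell^2\le C f(\ell)\max(1,|\ell|)$ together with $|\ell_i|\lesssim\log\kappa_{\rm Kap}$;
\item a version of \cref{eq:stochastic} under an additional spectral lower bound on $\bm{B}$, for example $\lambda_i\ge c\,\bar\lambda$, which makes $\ell^2\le C_c f(\ell)$ hold uniformly.
\end{itemize}
Steps 1 and 2 would carry over unchanged to either version.
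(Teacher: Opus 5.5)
Your Steps 1 and 2 follow the paper's own route. The paper normalizes to geometric mean one, $\lambda_i' = \lambda_i/(\prod_j \lambda_j)^{1/n}$, and obtains the single-sample variance $\frac{n}{n+1}\sum_i (\log\lambda_i')^2$, which is your $\frac{n}{n+1}\sum_i(\ell_i-\bar\ell)^2$.

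At the point where you stall, the paper invokes $(\log u)^2 \le 2(u-1-\log u)$, justified by $1+x+\frac12 x^2\le {\rm e}^x$. It sums this to get $\frac{2n^2}{n+1}(\kappa_{\rm Kap}^{1/n}-1)$, and then uses $\log(\kappa_{\rm Kap})\le n$ to reach $4\log(\kappa_{\rm Kap})$. However, $1+x+\frac12 x^2\le{\rm e}^x$ holds only for $x\ge 0$; at $x=-1$ it reads $\frac12\le{\rm e}^{-1}$. So the termwise inequality is false for every $u<1$, which is exactly the regime of eigenvalues far below the geometric mean that your example uses. The paper's proof therefore has a genuine error at this step, and it is the obstruction you identified: as $\ell\to-\infty$, $\ell^2$ cannot be dominated by a constant times your $f(\ell)={\rm e}^\ell-1-\ell$.

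Your counterexample is correct, so the statement is false as written, not merely unproved. Take $\hat{\bm{A}}=\mathbf{I}$, $\bm{A}=\operatorname{diag}({\rm e}^{-10},1,\ldots,1)$ and $n=100$.
\begin{itemize}
\item $\log(\kappa_{\rm Kap})\approx 9.0\le n$, so the hypothesis holds.
\item The $t=1$ mean square error in the complex case is $\frac{100}{101}\cdot 99\approx 98$, against the claimed $4\log(\kappa_{\rm Kap})\approx 36$.
\item The real case also fails: about $194$ versus about $72$.
\item The paper's intermediate bound $\frac{2n^2}{n+1}(\kappa_{\rm Kap}^{1/n}-1)\approx 18.6$ already fails on this example.
\end{itemize}

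Your first proposed repair is sound and of the right order. Under arithmetic-mean normalization, $\ell^2\le 2{\rm e}\max(1,|\ell|)\,f(\ell)$ for all real $\ell$. Also each $f(\ell_i)\le\log(\kappa_{\rm Kap})$, which gives $|\ell_i|\le 1+\log(\kappa_{\rm Kap})$. Together these yield $2{\rm e}\log(\kappa_{\rm Kap})\bigl(1+\log(\kappa_{\rm Kap})\bigr)/t$ in the complex case. Your example shows that the quadratic dependence on $\log(\kappa_{\rm Kap})$ cannot be removed.

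A minor additional slip in the paper: $({\rm e}^x-1)/x\le 2$ holds only for $x\le 1.256$, not $1.42$. This is harmless under the stated hypothesis $\log(\kappa_{\rm Kap})\le n$.
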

\Cref{prop:stochastic_det} shows how the stochastic determinant estimator achieves a smaller mean square error than the approximate direct determinant estimator as soon as the number of iterations is $t \geq 4/\log(\kappa_{\rm Kap})$.

In stochastic determinant estimation, we need to evaluate terms of the form 
\begin{equation*}
    \bm{u}_i^* \log(\bm{B}) \bm{u}_i,
    \qquad \text{where } \bm{B} = \hat{\bm{A}}^{-1/2} \bm{A} \hat{\bm{A}}^{-1/2}.
\end{equation*}
To evaluate each term, we can first generate an orthonormal basis $\bm{Q} \in \mathbb{C}^{n \times m}$ for the Krylov subspace
\begin{equation*}
    K_m(\bm{B}, \bm{u}_i)
    = \operatorname{span}\{\bm{u}_i, \bm{B} \bm{u}_i, \ldots, \bm{B}^m \bm{u}_i\}.
\end{equation*}
We can then return the Krylov-Ritz approximation \cite{frommer2016error}
\begin{equation*}
    \bm{u}_i^* \log(\bm{B}) \bm{u}_i
    \approx \bm{u}_i^* \bm{Q} \log(\bm{Q}^* \bm{B} \bm{Q}) \bm{Q}^* \bm{u}_i.
\end{equation*}
We calculate $\log(\bm{Q}^* \bm{B} \bm{Q})$ by directly applying the logarithm to the matrix eigenvalues.
The cost is then dominated by $m$ matrix--vector multiplications, requiring $\mathcal{O}(m n^2)$ arithmetic operations.
See \cref{sec:log} for a comparison of Krylov-Ritz approximations with different depth parameters $m \in \{100, 1000\}$.

\section{Optimization strategies} \label{sec:optimization}

This section investigates different sparsity patterns, with the goal of minimize the Kaporin condition number.
First we consider different pivot sets that can be used in the partial Cholesky + diagonal approximation (\cref{sec:partial_plus}).
Then we add nonzero elements to the Vecchia residual component to bring down the Kaporin condition number further
(\cref{sec:adding}).

\subsection{Partial Cholesky + diagonal} \label{sec:partial_plus}

The Kaporin condition number for the partial Cholesky + diagonal approximation, when finite, can be written as
\begin{equation}
\label{eq:rewrite}
    \kappa_{\rm Kap}(\mathsf{R}) = \prod_{d_{\bm{A}}(\bm{e}_i, \operatorname{span}\{\bm{e}_j\}_{j \in \mathsf{R}}) > 0}
    \frac{d_{\bm{A}}\bigl(\bm{e}_i, \operatorname{span}\{\bm{e}_j\}_{j \in \mathsf{R}} \bigr)^2}{d_{\bm{A}}\bigl(\bm{e}_i, \operatorname{span}\bigl(\{\bm{e}_j\}_{j < i} \cup \{\bm{e}_j\}_{j \in \mathsf{R}} \bigr)^2}.
\end{equation}
$\kappa_{\rm Kap}(\mathsf{R})$ depends only on the partial Cholesky pivot set $\mathsf{R} = \{u_1, \ldots, u_r\}$.
Below we consider several strategies for choosing a pivot set to minimize the Kaporin condition number.

\subsubsection{Adaptive search} \label{sec:adaptive_search}
We first introduce a direct optimization strategy that we call \emph{adaptive search}.
Adaptive search starts with an empty pivot set $\mathsf{R} = \emptyset$.
At each stage, this algorithm tries all available pivots and inducts a new pivot causing the maximum decrease in the Kaporin condition number.
\begin{flalign*}
    \qquad \qquad \mathsf{R} \leftarrow \mathsf{R} \cup \{i\},
    \quad \text{where } i \in \operatornamewithlimits{argmin}_{1 \leq j \leq n} \kappa_{\rm Kap}(\mathsf{R} \cup \{j\})
    && \text{[adaptive search]}. \qquad
\end{flalign*}
Adaptive search can lead to high-quality approximations, but it is expensive.
Each stage requires processing \emph{all} the entries in the matrix, and the cost of producing a cardinality-$r$ pivot set is thus $\mathcal{O}(r n^2)$ arithmetic operations.

\subsubsection{Adaptive sampling} \label{sec:adaptsampalgos}
We next consider a class of cheaper optimization strategies that we call \emph{adaptive sampling}.
Adaptive sampling methods are commonly used to select pivots for column-based matrix approximations, including partial Cholesky + Vecchia approximations \cite{zhao2024adaptive,cai2025posterior}.
Each adaptive sampling algorithm is based on a selection rule that prioritizes pivots ``far away'' from the already-selected pivots.
The most common pivot selection rules are defined as follows.
\begin{itemize}
\item In ``randomly pivoted Cholesky'' (RPC, \cite{chen2025randomly}), each new pivot is randomly sampled from the following adaptive probability distribution.
\begin{flalign*}
    \quad \mathsf{R} \leftarrow \mathsf{R} \cup \{i\},
    \quad \text{with } 
    \operatorname{prob}(i) = \frac{d_{\bm{A}}\bigl(\bm{e}_i, \operatorname{span}\{\bm{e}_j\}_{j \in \mathsf{R}}\bigr)^2}{\sum_{j=1}^n d_{\bm{A}}\bigl(\bm{e}_j, \operatorname{span}\{\bm{e}_j\}_{j \in \mathsf{R}} \bigr)^2}. && \mathrm{[RPC]} \quad
\end{flalign*}
\item 
In ``square distance sampling'' (SDS, \cite{arthur2007kmeans}), each new pivot is randomly sampled with slightly different probabilities.
\begin{flalign*}
    \quad \mathsf{R} \leftarrow \mathsf{R} \cup \{i\},
    \quad \text{with } \operatorname{prob}(i) = \frac{d_{\bm{A}}\bigl(\bm{e}_i, \{\bm{e}_j\}_{j \in \mathsf{R}}\bigr)^2}{\sum_{j = 1}^n d_{\bm{A}}\bigl(\bm{e}_j, \{\bm{e}_j\}_{j \in \mathsf{R}}\bigr)^2}.
    && \mathrm{[SDS]} \quad
\end{flalign*}
\item
In ``column pivoted Cholesky'' (CPC, \cite{higham1990analysis}), each new pivot maximizes the following distance.
\begin{flalign*}
    \quad \mathsf{R} \leftarrow \mathsf{R} \cup \{i\},
    \quad \text{where } i \in \operatornamewithlimits{argmax}_{1 \leq j \leq n} d_{\bm{A}}\bigl(\bm{e}_j, \operatorname{span}\{\bm{e}_j\}_{j \in \mathsf{R}} \bigr).
    && \mathrm{[CPC]} \quad
\end{flalign*}
\item
Last, in ``farthest point sampling'' (FPS, \cite{gonzalez1985clustering}), we apply a slightly different maximum distance rule.
\begin{flalign*}
    \quad \mathsf{R} \leftarrow \mathsf{R} \cup \{i\},
    \quad \text{where } i \in \operatornamewithlimits{argmax}_{1 \leq j \leq n} d_{\bm{A}}\bigl(\bm{e}_j, \{\bm{e}_j\}_{j \in \mathsf{R}} \bigr).
    && \mathrm{[FPS]} \quad 
\end{flalign*}
\end{itemize}
\noindent If there are ties in CPC or FPS, we can apply any tie-breaking rule such as uniformly random selection.

Each of the adaptive sampling algorithms is cheap to perform.
Recall that the partial Cholesky approximation has a construction cost of $\mathcal{O}(rn)$ entry lookups and an additional $\mathcal{O}(r^2 n)$ arithmetic operations.
Implementing an adaptive sampling rule for the pivots adds an additional $\mathcal{O}(n)$ entry look-ups and $\mathcal{O}(rn)$ operations, so it is effectively free.

A long line of results \cite{chen2025randomly,deshpande2007sampling,engler1997behavior,arthur2007kmeans,makarychev2020improved,gonzalez1985clustering} shows how adaptive sampling algorithms minimize the following distance functionals.
\begin{equation}
\label{eq:objectives}
\begin{aligned}
    \eta_{\rm RPC}(\mathsf{R}) 
    &= \sum_{i=1}^n d_{\bm{A}}\bigl(\bm{e}_i, \operatorname{span}\{\bm{e}_j\}_{j \in \mathsf{R}} \bigr)^2, \\
    \eta_{\rm SDS}(\mathsf{R}) 
    &= \sum_{i = 1}^n d_{\bm{A}}\bigl(\bm{e}_i, \{\bm{e}_j\}_{j \in \mathsf{R}}\bigr)^2,
     \\
    \eta_{\rm CPC}(\mathsf{R}) 
    &= \max_{1 \leq i \leq n} d_{\bm{A}}\bigl(\bm{e}_i, \operatorname{span}\{\bm{e}_j\}_{j \in \mathsf{R}} \bigr), \text{ or} \\
    \eta_{\rm FPS}(\mathsf{R}) 
    &= \max_{1 \leq i \leq n} d_{\bm{A}}\bigl(\bm{e}_i, \{\bm{e}_j\}_{j \in \mathsf{R}}\bigr).
\end{aligned}
\end{equation}
The approximation quality is rigorously bounded as follows.
\begin{theorem}[Adaptive sampling guarantees] \label{thm:worst_case}
    For any positive-semidefinite matrix $\bm{A} \in \mathbb{C}^{n \times n}$ and any cardinality $r \leq \operatorname{rank}(\bm{A})$, adaptive sampling generates a random or deterministic pivot set $\mathsf{R} \subseteq \{1, \ldots, n\}$ with cardinality $|\mathsf{R}| = r$ that satisfies
    \begin{align*}
        \frac{\mathbb{E}\bigl[ \eta_{\rm RPC}(\mathsf{R})\bigr]}{\min_{|\mathsf{S}| \leq r} \eta_{\rm RPC}(\mathsf{S})}
        &\leq 2^r,
        && \mathrm{[RPC]} \\
        \frac{\eta_{\rm CPC}(\mathsf{R})}{\min_{|\mathsf{S}| \leq r} \eta_{\rm CPC}(\mathsf{S})}
        &\leq r!,
        && \mathrm{[CPC]} \\
        \frac{\mathbb{E}\bigl[ \eta_{\rm SDS}(\mathsf{R})\bigr]}{\min_{|\mathsf{S}| \leq r} \eta_{\rm SDS}(\mathsf{S})}
        &\leq 5 (\log r + 2),
        && \mathrm{[SDS]} \\
        \frac{\eta_{\rm FPS}(\mathsf{R})}{\min_{|\mathsf{S}| \leq r} \eta_{\rm FPS}(\mathsf{S})}
        &\leq 2.
        && \mathrm{[FPS]}
    \end{align*}
    The distance functionals $\eta_{\rm RPC}$, $\eta_{\rm CPC}$, $\eta_{\rm SDS}$, and $\eta_{\rm FPS}$ are defined in \cref{eq:objectives}.
\end{theorem}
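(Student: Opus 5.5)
The statement collects four known guarantees from the literature, so the plan is to place each adaptive sampling rule into the framework of the cited result and check that the hypotheses transfer. The common device is a Gram representation. Factor $\bm{A} = \bm{B}^*\bm{B}$ with columns $\bm{b}_1,\ldots,\bm{b}_n$. Then $d_{\bm{A}}(\bm{e}_i,\bm{e}_j) = \lVert \bm{b}_i - \bm{b}_j\rVert$, and $d_{\bm{A}}(\bm{e}_i, \operatorname{span}\{\bm{e}_j\}_{j\in\mathsf{R}})$ is the Euclidean distance from $\bm{b}_i$ to $\operatorname{span}\{\bm{b}_j\}_{j\in\mathsf{R}}$. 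Under this dictionary, $\eta_{\rm RPC}$ is the squared-projection (column subset selection) error, $\eta_{\rm SDS}$ is the $k$-means cost with centers restricted to data points, $\eta_{\rm CPC}$ is the max projection residual, and $\eta_{\rm FPS}$ is the discrete $k$-center objective. The RPC and SDS probabilities are exactly the adaptive ($D^2$ / volume-type) sampling rules, and CPC and FPS are the greedy max rules.

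I would then dispatch each line in turn. For FPS, I would use Gonzalez's argument. Let $\delta$ be the final max distance. The $r$ chosen points together with the farthest remaining point form $r+1$ points that are pairwise at distance at least $\delta$. Any $r$ centers must serve two of them with one center, so by the triangle inequality the optimum is at least $\delta/2$. For SDS, I would cite the $k$-means++ analysis. Arthur--Vassilvitskii gives $8(\ln r+2)$; Makarychev et al.\ give the sharper constant $5(\ln r + 2)$. These bounds are stated with arbitrary centers, and restricting to data centers in the denominator only enlarges the optimum, so the ratio bound holds a fortiori. For RPC, I would invoke the adaptive/volume sampling bound of Deshpande et al.\ and Chen et al.\ comparing to the best rank-$r$ approximation error. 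The best column subset error is at least the best rank-$r$ error, which again only helps, giving $(r+1)!$-type or $2^r$-type constants; I would check that the cited version gives $2^r$. For CPC, I would use the classical pivoted QR/Cholesky analysis: the greedy residual is at most $r!$ times (or $2^r$-type factors times) the optimal one, via the growth bound on the inverse of the triangular factor.

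The main obstacle is constant-matching. The cited results are phrased against the best rank-$r$ (SVD) approximation, or use different norms, so the key step is to argue that $\min_{|\mathsf{S}|\le r}\eta(\mathsf{S})$ dominates the benchmark in the cited theorem. I would also need to confirm that the CPC bound, which is usually stated for spectral or Frobenius residuals, transfers to the max-column-norm functional $\eta_{\rm CPC}$. If it does not transfer directly, I would first try an induction on $r$: at each greedy step, bound the optimal residual below by the current greedy residual divided by a factor growing at most linearly in the step index, which yields the $r!$.
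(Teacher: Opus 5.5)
\textbf{Verdict.} There is a genuine gap, in the CPC line, and it cannot be closed because that inequality is false as stated. The other three lines are fine and match the paper's proof, which consists only of citations.

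\textbf{The three lines that work.} Your Gonzalez argument is a complete proof of the FPS line. For SDS and RPC, the cited bounds compare against optima over larger classes (arbitrary centers, arbitrary rank-$r$ subspaces). The paper's column-restricted $\min_{|\mathsf{S}|\le r}$ is therefore no smaller, so the ratios only improve. This is exactly the reduction the paper leaves implicit. Two small points:
\begin{itemize}
\item $k$-means++ draws its first center uniformly. You need to adopt that as the SDS convention, since the SDS rule reads $\infty/\infty$ at $\mathsf{R}=\emptyset$.
\item The Deshpande et al.\ adaptive-sampling bound you mention gives only $(r+1)!$, which exceeds $2^r$ for $r\ge 2$. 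So the RPC line really rests on the $2^r$ estimate the paper cites from Chen et al.\ (Lem.~5.5).
\end{itemize}

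\textbf{Counterexample to the CPC line.} Take $r=1$ and the Gram matrix of $\bm{b}_1=(1,0)$, $\bm{b}_2=(0,1)$, $\bm{b}_3=(\tfrac12,\tfrac12)$:
\[
\bm{A}=\begin{bmatrix} 1 & 0 & \tfrac12 \\ 0 & 1 & \tfrac12 \\ \tfrac12 & \tfrac12 & \tfrac12 \end{bmatrix},
\qquad \operatorname{rank}(\bm{A})=2 .
\]
CPC's first pivot maximizes $\bm{A}(j,j)$, so it is index $1$ or $2$. Either choice gives $\eta_{\rm CPC}=1$, because the other unit vector is orthogonal to the pivot. But $\eta_{\rm CPC}(\{3\})=1/\sqrt2$, so the ratio is $\sqrt2>1=1!$.

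\textbf{Why your routes fail.} Your fallback induction would need a first-step factor of $1$. That means the largest-diagonal pivot would have to be an optimal single column for the max-residual objective, and this example refutes it. The pivoted-QR route through the inverse triangular factor compares $R_{22}$ with singular values and carries $n$-dependent factors. So it does not yield a dimension-free bound against a column-subset optimum either.

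\textbf{What is provable.} The bound holds with $(r+1)!$ in place of $r!$. Let $d_1\ge\cdots\ge d_{r+1}$ be the successive CPC residuals, so $d_{r+1}=\eta_{\rm CPC}(\mathsf{R})$. Suppose every column lies within $\varepsilon$ of an $r$-dimensional subspace, for example $\operatorname{span}\{\bm{b}_j\}_{j\in\mathsf{S}}$ with $\varepsilon=\eta_{\rm CPC}(\mathsf{S})$. The $r$ greedy pivots plus the next one span a parallelepiped of volume $d_1\cdots d_{r+1}$.
\begin{itemize}
\item Weyl's inequality gives $\sigma_{r+1}\le\sqrt{r+1}\,\varepsilon$ for these $r+1$ vectors.
\item Cauchy--Binet then gives $d_1\cdots d_{r+1}\le (r+1)\,\varepsilon\,V_r$, where $V_r$ is the largest volume spanned by $r$ of the $\bm{b}_j$.
\item Greedy volume maximization is within a factor $r!$ of $V_r$, so $V_r\le r!\,d_1\cdots d_r$.
\end{itemize}
Combining these gives $d_{r+1}\le (r+1)!\,\varepsilon$, which is consistent with the example ($\sqrt2\le 2$). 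The constant in the CPC line, and hence in the paper's citation-based proof of it, should be corrected to $(r+1)!$ rather than proved as stated.
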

\begin{proof}
    The RPC error bound is \cite[Lem.~5.5]{chen2025randomly}.
    The CPC error bound is implied by \cite[Thm.~1]{deshpande2007sampling} in the limit $p \rightarrow \infty$; see also \cite[Thm.~2]{engler1997behavior}.
    The SDS error bound is \cite[Thm.~3.1]{arthur2007kmeans}, with an improved approximation factor due to \cite[Lem.~4.1]{makarychev2020improved}.
    The FPS error bound is \cite[Thm.~2.2]{gonzalez1985clustering}.
\end{proof}

\Cref{thm:worst_case} establishes rigorous approximation guarantees for the adaptive sampling algorithms.
The approximation factor depends only on the number of selected pivots $r$, independent of the dimensions $n$ of the target matrix.
As a weakness of this theory, however, the adaptive sampling algorithms optimize different functionals from the Kaporin condition number.
Empirically, adaptive sampling algorithms produce less accurate approximations than adaptive search (\cref{sec:empirical}).

The main reason to use adaptive sampling in practice is because of its speed; it is much faster than adaptive search.
In the future, we would ideally develop pivot choosers that are as fast as adaptive sampling and directly target the Kaporin condition number.

\subsection{Partial Cholesky + Vecchia} \label{sec:adding}

We can improve the partial Cholesky + diagonal approximation by incorporating a Vecchia residual component with a nonzero sparsity pattern $(\mathsf{Q}_i)_{i=1}^n$.
The Kaporin condition number for the partial Cholesky + Vecchia approximation, when finite, can be written as
\begin{equation}
\label{eq:kaporin_new}
    \kappa_{\rm Kap} = \prod_{d_{\tilde{\bm{R}}}(\bm{e}_i, \operatorname{span}\{\bm{e}_j\}_{j < i}) > 0}    \frac{d_{\tilde{\bm{R}}}\bigl(\bm{e}_i, \operatorname{span}\{\bm{e}_j\}_{j \in \mathsf{Q}_i} \bigr)^2}{d_{\tilde{\bm{R}}}\bigl(\bm{e}_i, \operatorname{span}\{\bm{e}_j\}_{j < i} \bigr)^2},
    \qquad \text{where } \tilde{\bm{R}} = \bm{P}^* \bm{R} \bm{P}.
\end{equation}
$\kappa_{\rm Kap}$ depends on a product of square weighted distances, where the weighting matrix is the pivoted residual matrix $\tilde{\bm{R}}$ from the partial Cholesky approximation.

To optimize the Kaporin condition number, we would ideally select each sparsity index set $\mathsf{Q}_i$ to make the distance
\begin{equation}
\label{eq:sq_dist}
    d_{\tilde{\bm{R}}}\bigl(\bm{e}_i, \operatorname{span}\{\bm{e}_j\}_{j \in \mathsf{Q}_i} \bigr)
\end{equation}
as small as possible.
Unfortunately, finding an index set with bounded cardinality $|\mathsf{Q}_i| \leq q$ that exactly minimizes the distance is NP-hard \cite[Thm.~1]{natarajan1995sparse}.
Nevertheless, 
we can consider approximation methods that might work well for matrices $\tilde{\bm{R}}$ arising in practice.

Two sparsity choosers appearing in the recent Vecchia literature are \emph{nearest neighbor search} \cite{guinness2018permutation} and \emph{orthogonal matching pursuit} \cite{huan2023sparse}.
Both approaches start with an empty sparsity index set $\mathsf{Q}_i = \emptyset$.
Then, in our context, NN search recursively adds a new index $j$ that solves the following distance minimization problem.
\begin{flalign}
\label{eq:nn}
    \qquad \qquad \mathsf{Q}_i \leftarrow \mathsf{Q}_i \cup \{j\},
    \quad \text{where } j \in \operatornamewithlimits{argmin}_{k \notin \mathsf{Q}_i} d_{\tilde{\bm{R}}}(\bm{e}_i, \bm{e}_k)
    && \text{[NN search]}.
\end{flalign}
In contrast, orthogonal matching pursuit uses a slightly different index selection rule.
\begin{flalign}
\label{eq:omp}
    \quad \mathsf{Q}_i \leftarrow \mathsf{Q}_i \cup \{j\},
    \quad \text{where } j \in \operatornamewithlimits{argmin}_{k \notin \mathsf{Q}_i} d_{\tilde{\bm{R}}}\bigl(\bm{e}_i, \operatorname{span}\{\bm{e}_j\}_{j \in \mathsf{Q}_i \cup \{k\}} \bigr)
    && \text{[OMP]}.
\end{flalign}
Comparing these two methods, OMP has the advantage that it directly targets the distance in the Kaporin condition number.
Yet, OMP has no a priori guarantees unless the off-diagonal entries of $\tilde{\bm{R}}$ decay quickly \cite[Thms. B and C]{tropp2004greed}.

Performing a full NN search or OMP algorithm is expensive because accessing the relevant entries of $\tilde{\bm{R}}$ is expensive.
To look up the entry $\tilde{\bm{R}}(i,j)$, we must first look up the $(i, j)$ entry of $\tilde{\bm{A}} = \bm{P}^*\bm{A} \bm{P}$ and then perform $\mathcal{O}(r)$ additional arithmetic operations using the rank-$r$ partial Cholesky approximation.
As such, the fastest direct implementation of the NN search requires $\mathcal{O}(n)$ entry look-ups of $\bm{A}$ and $\mathcal{O}(rn)$ arithmetic operations, independently of the number of nearest neighbors \cite{blum1973time}.
The fastest direct implementation of OMP requires $\mathcal{O}(n)$ entry look-ups of $\bm{A}$ and $\mathcal{O}((q^2 + q r) n)$ operations \cite{rubinstein2008batch}.

To reduce the computational burden, Huan and coauthors proposed restricting the Vecchia sparsity pattern to a ``candidate'' index set $\mathcal{C}_i \subseteq \{1, \ldots, i-1\}$, corresponding to the nearest neighbors in the $\tilde{\bm{A}}$-weighted distance \cite{huan2023sparse}.
Here we adapt their approach to define the following two-step optimization procedure.
\begin{enumerate}
    \item Build a restricted-cardinality candidate set $\mathsf{C}_i$ containing indices $j < i$ that minimize $d_{\tilde{\bm{A}}}(\bm{e}_i, \bm{e}_j)$.
    \item Apply the NN search \cref{eq:nn} or OMP algorithm \cref{eq:omp}, \emph{while restricting the search to the candidate indices $j \in \mathsf{C}_i$}.
\end{enumerate}
Step 1 requires $\mathcal{O}(n)$ entry look-ups and $\mathcal{O}(n)$ extra arithmetic operations, independently of the number of candidates.
Step 2 requires just $\mathcal{O}(n)$ arithmetic operations when the number of candidates is $|\mathsf{C}_i| = \mathcal{O}(n/r)$ for the NN search or $|\mathsf{C}_i| = \mathcal{O}(n/(q^2 + qr))$ for the OMP algorithm.
The two-step optimization approach is heuristic but is often effective in applications.

\section{Experiments}
\label{sec:empirical}

Kernel machine learning is a high-performing method for prediction and interpolation, but it is currently limited to small- or moderate-sized data sets due to computational cost \cite{scholkopf2001learning}.
Kernel matrices are typically dense and nearly singular, so they are challenging to handle with traditional iterative linear solvers or determinant estimation.
This section evaluates partial Cholesky + Vecchia as an accurate factored approximation that could potentially speed up kernel machine learning computations in the future.

\subsection{Experimental setup}

\begin{table}[t]
\centering
\caption{Data sets used in our experiments.
The machine learning data sets can be downloaded using a script developed for the paper \cite{diaz2024robust}, available at \href{https://github.com/eepperly/Robust-randomized-preconditioning-for-kernel-ridge-regression}{this link}.}
\label{tab:datasets}
\begin{tabular}{lll} \toprule
\textbf{Data set} & \textbf{Dimension} $d$ & \textbf{Source} \\
\midrule
\texttt{COMET\_MC\_SAMPLE} & 4 & LIBSVM \\ 
\texttt{cod\_rna} & 8 & LIBSVM \\
\texttt{Airlines\_DepDelay\_1M} & 9 & OpenML \\ 
\texttt{diamonds} & 9 & OpenML \\
\texttt{ACSIncome} & 11 & OpenML \\
\texttt{Click\_prediction\_small} & 11 & OpenML \\
\texttt{hls4ml\_lhc\_jets\_hlf} & 16 & OpenML \\
\texttt{Medical\_Appointment} & 18 & OpenML \\
\texttt{ijcnn1} & 22 & LIBSVM \\ 
\texttt{HIGGS} & 28 & LIBSVM \\ 
\texttt{creditcard} & 29 & OpenML \\ 
\texttt{sensorless} & 48 & LIBSVM \\
\texttt{MiniBoonNE} & 50 & OpenML \\
\texttt{covtype\_binary} & 54 & LIBSVM \\ 
\texttt{jannis} & 54 & OpenML \\ 
\texttt{YearPredictionMSD} & 90 & LIBSVM \\
\texttt{sensit\_vehicle} & 100 & LIBSVM \\
\texttt{yolanda} & 100 & OpenML \\
\texttt{connect\_4} & 126 & LIBSVM \\
\texttt{volkert} & 180 & OpenML \\ 
\texttt{santander} & 200 & OpenML \\
\texttt{MNIST} & 784 & OpenML \\
\bottomrule
\end{tabular}
\end{table}

To set up the kernel machine learning tests, we first downloaded 22 data sets from LIBSVM and OpenML.
For each data set, we normalized the predictors to have mean zero and variance one.
Then we subsampled the first $n = 20{,}000$ observations, yielding data points $\bm{z}_1, \ldots, \bm{z}_n \in \mathbb{R}^d$.
The number of predictors in each data set ranges from $d = 4$ to $d = 784$; see \cref{tab:datasets} for a full list.
For each regularization parameter $\mu \in \{10^{-10}, 10^{-6}, 10^{-3}\}$, we defined the strictly positive-definite kernel matrix $\bm{A} \in \mathbb{R}^{n \times n}$ with entries
\begin{equation*}
    \bm{A}(i,j) = \exp\biggl(-\frac{\lVert \bm{z}_i - \bm{z}_j \rVert^2}{2d}\biggr) + \mu\, \delta(i,j).
\end{equation*}
Then we generated several matrix approximations $\hat{\bm{A}}$, and we used these approximations as preconditioners for solving linear systems or calculating determinants.
See \cref{sec:optimality} for detailed descriptions of the preconditioning methods for linear systems and determinant calculations.
We ran three specific tests, described below.

\paragraph{1. PCG tests with label vectors}
We first solved linear systems $\bm{A} \bm{x} = \bm{b}$ where the response vectors $\bm{b} \in \mathbb{R}^n$ contained the labels provided with each data set.
We did not normalize $\bm{b}$ and even treated qualitative responses as quantitative responses.
These label vectors led to comparatively slow PCG convergence, so we terminated the method as soon as the relative error reached a tolerance $\lVert \bm{A} \hat{\bm{x}} - \bm{b} \rVert / \lVert \bm{b} \rVert \leq 10^{-3}$.
\paragraph{2. PCG tests with kernel vectors} Next we randomly sampled test data points  $\tilde{\bm{z}}_k \in \mathbb{R}^d$ for $k = 1, \ldots, 5$ and defined kernel response vectors $\bm{b}_k \in \mathbb{R}^n$ with entries
\begin{equation}
\label{eq:kernel_vectors}
\bm{b}_k(i) = \exp\biggl(-\frac{\lVert \bm{z}_i - \tilde{\bm{z}}_k \rVert^2}{2d}\biggr),
\qquad \text{for } i = 1, \ldots, n.
\end{equation}
These kernel vectors led to $10\times$ faster PCG convergence compared to the label vectors, so we ran the PCG method until reaching a tolerance $\lVert \bm{A} \hat{\bm{x}}_k - \bm{b}_k \rVert / \lVert \bm{b}_k \rVert \leq 10^{-4}$.
\paragraph{3. Log determinant tests}
Last we applied stochastic log determinant estimation to $\bm{A}$. 
We used ten randomly sampled test vectors $\bm{u}_1, \ldots, \bm{u}_{10} \in \mathbb{R}^n$, and we evaluated matrix logarithms using a Krylov–Ritz approximation \cite{frommer2016error} with depth $m = 100$; see \cref{sec:log} for a comparison with a larger depth $m = 1000$.
In our tests, we report the error in the normalized log determinant estimate $\frac{1}{n} \log (\hat{\bm{A}}) \approx \frac{1}{n} \log(\bm{A})$.

\subsection{Comparison of pivot choosers}

We first compared the performance of the five pivot choosers that were introduced in \cref{sec:partial_plus}.
\begin{itemize}
\item AS -- adaptive search;
\item RPC -- randomly pivoted Cholesky;
\item CPC -- column pivoted Cholesky;
\item SDS -- square distance sampling; and
\item FPS -- farthest point sampling.
\end{itemize}
We set the regularization parameter to $\mu = 10^{-3}$, and we applied the five pivot choosers with approximation rank $r = \lfloor n^{1/2}\rfloor = 141$ to help construct partial Cholesky + diagonal (\textbf{PC+V0}) and partial Cholesky + Vecchia (\textbf{PC+V1/4}) approximations.
For the Vecchia component, we selected
$q = \lfloor n^{1/4} \rfloor = 11$ off-diagonal nonzeros per row using the two-step OMP algorithm with $c = 10q$ candidates (\cref{sec:adding}).

\begin{figure}[t]
    \centering
    \includegraphics[width=\linewidth]{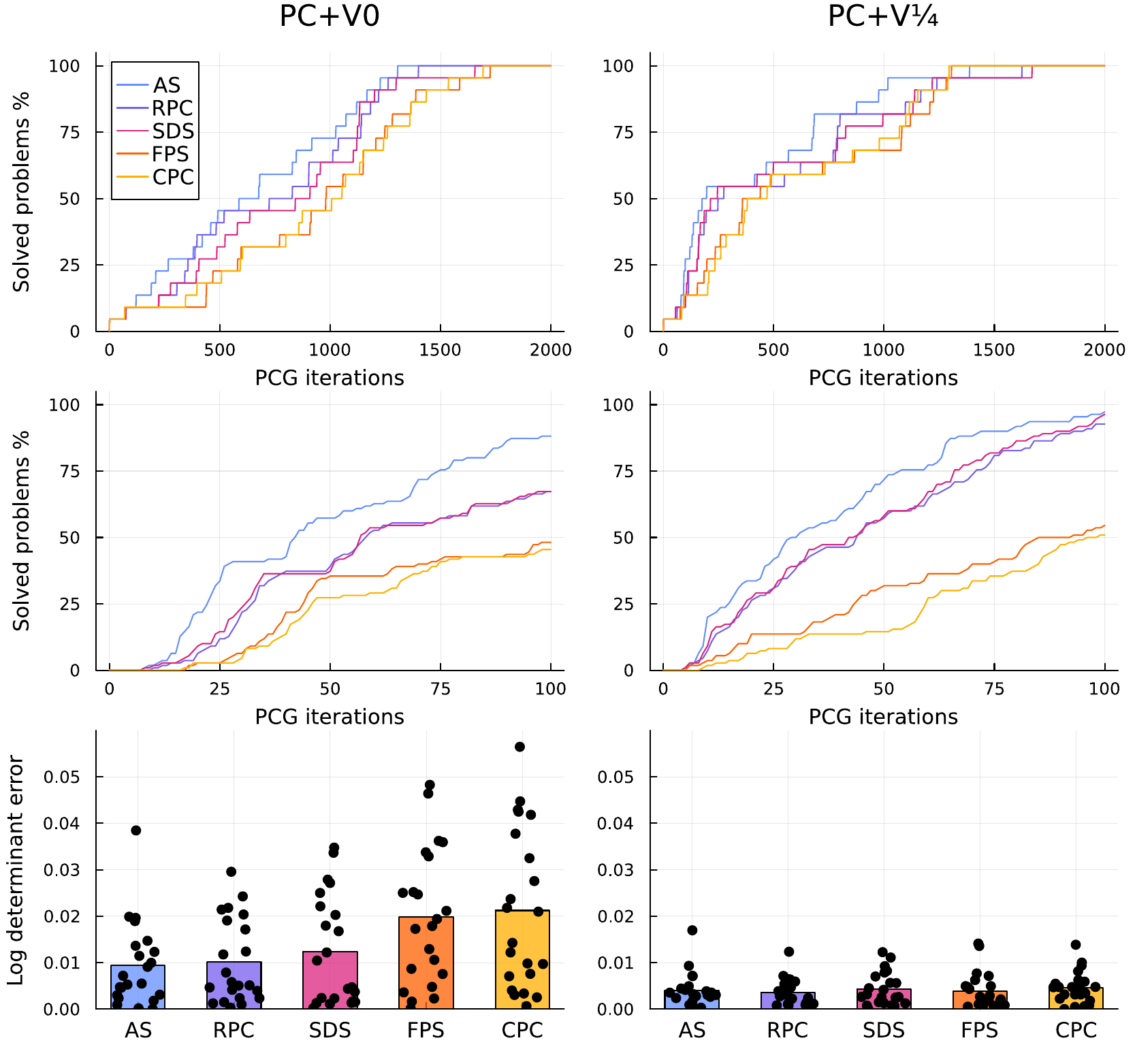}
    \caption{Comparison of five pivot choosers for building partial Cholesky + diagonal (\textbf{PC+V0}, left) and partial Cholesky + Vecchia (\textbf{PC+V1/4}, right) approximations.
    Top row: PCG tests with kernel vectors.
    Middle row: PCG tests with label vectors.
    Bottom row: log determinant tests.}
    \label{fig:PCG_pivots}
\end{figure}

\Cref{fig:PCG_pivots} shows that AS leads to the most accurate linear system solves and log determinant estimates of any available pivot chooser.
However, when using AS, the construction cost is two orders of magnitude higher than with any other method.
Therefore, AS is not a practical option for pivot selection, but this method's high accuracy suggests there might be an opportunity to improve the performance of practical pivot choosers in the future.

Among the practical pivot choosers, \cref{fig:PCG_pivots} shows that RPC and SDS lead to the most accurate linear system solves, and RPC leads to the most accurate determinant calculations.
FPS and CPC are less accurate across the tests, which is surprising since FPS was the main approach recently used to construct partial Cholesky + Vecchia approximations \cite{zhao2024adaptive,cai2025posterior}.
For example, RPC enables the solution to $1.4$--$1.7\times$ as many linear systems with kernel vectors after 100 iterations.

\subsection{Comparison of sparsity choosers}

We next compared the performance of the two sparsity choosers that were introduced in \cref{sec:adding}.
\begin{itemize}
\item NN -- nearest neighbor search with $c = 10\lfloor n^{1/2} \rfloor = 1{,}410$ candidates; and
\item OMP -- orthogonal matching pursuit with $c = 10 \lfloor n^{1/4} \rfloor = 110$ candidates.
\end{itemize}
We set the regularization parameter to $\mu = 10^{-3}$, and we applied the two sparsity choosers with $q = \lfloor n^{1/4} \rfloor = 11$ off-diagonal nonzeros per row to construct partial Cholesky + Vecchia (\textbf{PC+V1/4}) approximations.
For the partial Cholesky component, we used randomly pivoted Cholesky with an approximation rank $r = \lfloor n^{1/2} \rfloor = 141$.

\begin{figure}[t]
    \centering
    \includegraphics[width=\linewidth]{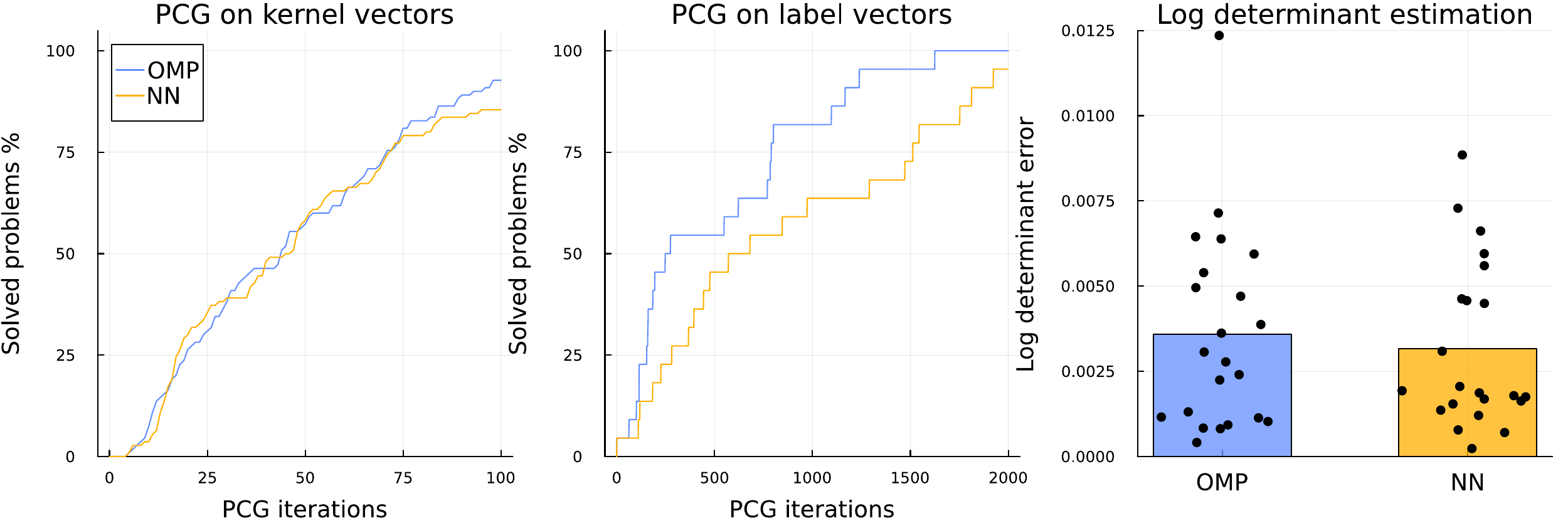}
    \caption{Comparison of two sparsity choosers for building a partial Cholesky + Vecchia (\textbf{PC+V1/4}) approximation.
    Left panel: PCG tests with label vectors.
    Middle panel: PCG tests with kernel vectors.
    Right panel: log determinant tests.}
    \label{fig:sparsity_choosers}
\end{figure}

\Cref{fig:sparsity_choosers} shows that OMP leads to the solution of $1.3\times$ as many linear systems compared to NN search, after 1000 iterations.
On the other tests, however, OMP and NN search perform similarly.

Going forward, we made the decision to use randomly pivoted Cholesky as a pivot chooser and orthogonal matching pursuit as a sparsity chooser, since this combination of methods generally yielded the highest accuracy.
As a theoretical benefit, OMP directly targets the minimization of the distances appearing inside the Kaporin condition number \cref{eq:kaporin_new}.
We found strong correspondence between the methods that directly minimize the Kaporin condition number and the methods that produce accurate linear solves and determinant estimates.

\subsection{Comparison to past preconditioners} \label{sec:vec_help}

There is a great variety of preconditioners for positive-semidefinite matrices \cite{tunnell2025empiricalstudyconjugategradient}, and a full empirical comparison is beyond the scope of the current paper.
Yet we can answer the more targeted question:
what is the best way to modify a partial Cholesky approximation to produce an effective preconditioner?

To answer this question, we compared three partial Cholesky-based preconditioners that we call \textbf{Frangella}, \textbf{D{\'i}az}, and \textbf{PC+V}.
\begin{itemize}
\item \textbf{Frangella}. Frangella et al. \cite[eq.~1.3]{frangella2023randomized} first proposed a partial Cholesky-based preconditioner for matrices of the form
\begin{equation*}
    \bm{A} = \bm{K} + \mu \mathbf{I}.
\end{equation*}
Here $\bm{K}$ is a positive-semidefinite kernel matrix and $\mu \geq 0$ is a regularization parameter.
In their work, Frangella and coauthors constructed a rank-$r$ partial Cholesky approximation $\hat{\bm{K}}$ \cite[Sec.~2.2.3]{frangella2023randomized} and then formed the preconditioner
\begin{equation}
\label{eq:bigger}
    \hat{\bm{A}} = \hat{\bm{K}} + \lambda_r(\hat{\bm{K}}) (\mathbf{I} - \bm{Q} \bm{Q}) + \mu \mathbf{I}.
\end{equation}
Here $\lambda_r(\hat{\bm{K}})$ is the $r$th largest eigenvalue of $\hat{\bm{K}}$, and $\bm{Q}$ is an orthonormal basis for $\hat{\bm{K}}$.

\item \textbf{D{\'i}az}. D{\'i}az et al. \cite[eq.~2.2]{diaz2024robust} later proposed a simplified partial Cholesky-based preconditioner
\begin{equation}
\label{eq:smaller}
    \hat{\bm{A}} = \hat{\bm{K}} + \mu \mathbf{I}.
\end{equation}
\item \textbf{PC+V}. Zhao et al. \cite{zhao2024adaptive} recently proposed the partial Cholesky + Vecchia preconditioner
\begin{equation*}
    \hat{\bm{A}} = \hat{\bm{A}}_{\rm part} + \hat{\bm{A}}_{\rm res}.
\end{equation*}
This method directly forms a rank-$r$ partial Cholesky approximation of $\bm{A} = \bm{K} + \mu \mathbf{I}$, without relying on a partial Cholesky approximation of $\bm{K}$.
\end{itemize}
We experimented with a range of regularization parameters $\mu \in \{10^{-10}, 10^{-6}, 10^{-3}\}$, and we constructed five different preconditioners based on randomly pivoted Cholesky with approximation rank $r = \lfloor n^{1/2} \rfloor = 141$.
In particular, we formed several partial Cholesky + Vecchia approximations with either $q = 0$ (\textbf{PC+V0}), $q = \lfloor n^{1/4} \rfloor = 11$ (\textbf{PC+V1/4}), or $q = \lfloor n^{1/3} \rfloor = 27$ (\textbf{PC+V1/3}) nonzero off-diagonal entries in the Vecchia component.
For the Vecchia construction, we used the two-step OMP algorithm with $c = 10q$ candidates.

\begin{figure}[t]
    \centering
    \includegraphics[width=\linewidth]{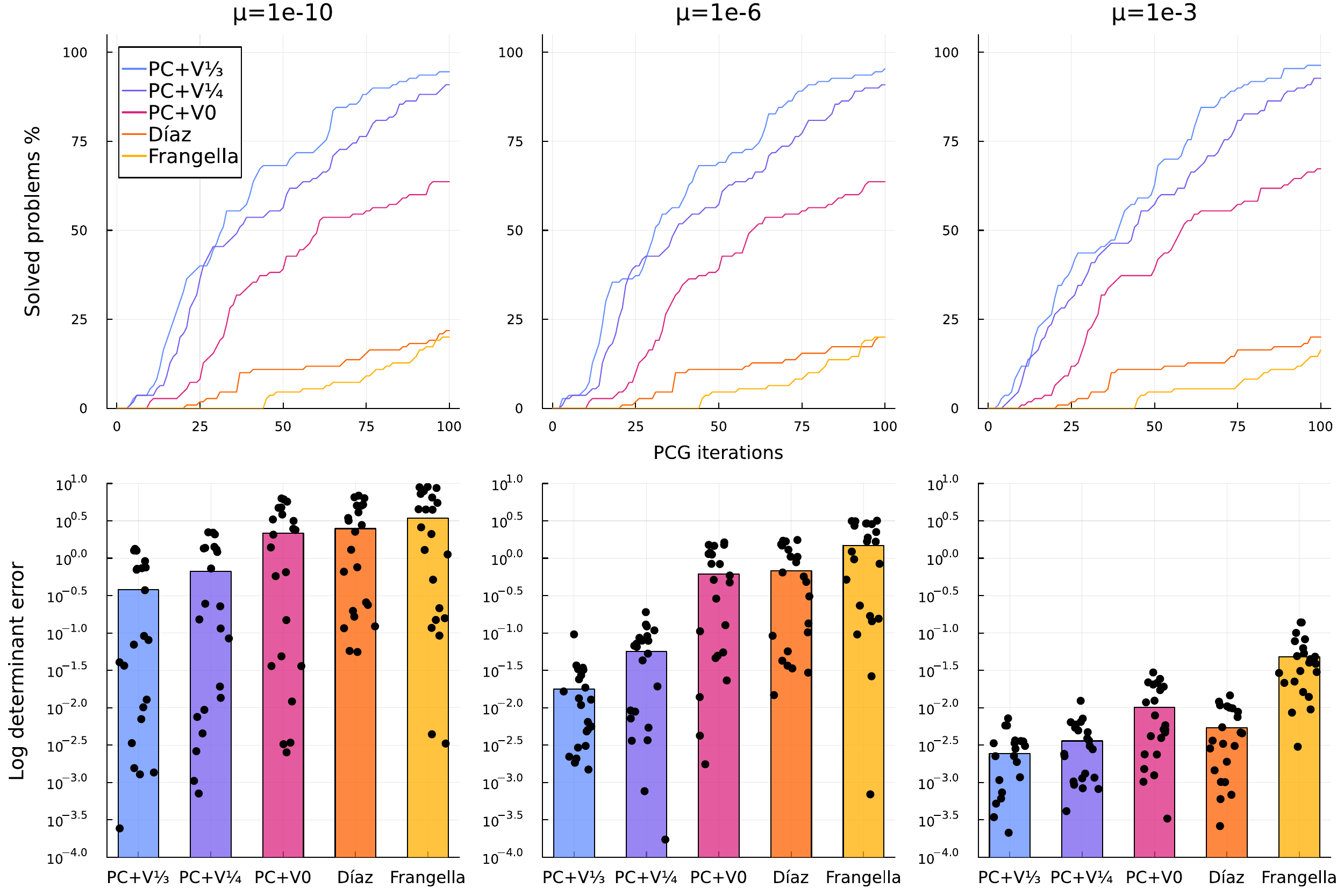}
    \caption{Comparison of five preconditioners
    based on randomly pivoted Cholesky with rank $r = \lfloor n^{1/2} \rfloor = 141$.
    Top row: PCG tests with kernel vectors.
    Bottom row: log determinant tests.
    Also see \cref{fig:intro} for the PCG tests with label vectors, using the same preconditioners.}
    \label{fig:outro}
\end{figure}

\Cref{fig:intro,fig:outro} show the results of comparing the five partial Cholesky-based preconditioners.
Across our tests,
\textbf{PC+V1/3} produced the highest accuracy linear solves and determinant estimates.
However, \textbf{PC+V1/3} has a construction cost of $\mathcal{O}(q^2rn) = \mathcal{O}(n^{13/6})$ arithmetic operations, so it is (slightly) more expensive than a linear-time algorithm.
Nevertheless, this approximation may prove useful in practice, and the high accuracy suggests that we can improve the partial Cholesky + Vecchia approximation through a creative strategy that further expands the sparsity pattern while controlling the computational expense.

\Cref{fig:intro,fig:outro} also show that the \textbf{PC+V0} and \textbf{PC+V1/4} preconditioners lead to consistently higher accuracy than the \textbf{Frangella} and \textbf{D{\'i}az} preconditioners while maintaining an asymptotic $\mathcal{O}(n^2)$ construction cost.
Compared to the \textbf{PC+V0} preconditioner, the \textbf{PC+V1/4} preconditioner improves the determinant accuracy by $3$--$11\times$ and leads to the solution to $1.4\times$ as many linear systems with kernel vectors after $100$ iterations.
Therefore, including even a small number of nonzero off-diagonal entries in the Vecchia component is empirically quite helpful.
This pattern of results is potentially surprising, since the number of off-diagonal nonzeros $q = \lfloor n^{1/4} \rfloor = 11$ is smaller than the number of predictors $d \in [4, 784]$ in $70\%$ of the machine learning data sets.

\section*{Acknowledgments}

The authors would like to thank Chris Cama\~no, Yifan Chen, Ethan N. Epperly, Christopher J. Geoga, and Florian Sch{\"a}fer for helpful discussions.

\bibliographystyle{siamplain}
\bibliography{references}

@article{zhao2024adaptive,
    author = {Zhao, Shifan and Xu, Tianshi and Huang, Hua and Chow, Edmond and Xi, Yuanzhe},
    title = {An Adaptive Factorized {N}ystr{\"o}m Preconditioner for Regularized Kernel Matrices},
    journal = {SIAM Journal on Scientific Computing},
    volume = {46},
    number = {4},
    pages = {A2351-A2376},
    year = {2024},
    doi = {10.1137/23M1565139},
}

@article{frangella2023randomized,
    author = {Frangella, Zachary and Tropp, Joel A. and Udell, Madeleine},
    title = {Randomized {N}ystr{\"o}m Preconditioning},
    journal = {SIAM Journal on Matrix Analysis and Applications},
    volume = {44},
    number = {2},
    pages = {718-752},
    year = {2023},
    doi = {10.1137/21M1466244},
}

@misc{diaz2024robust,
    title={Robust, randomized preconditioning for kernel ridge regression}, 
    author={Mateo Díaz and Ethan N. Epperly and Zachary Frangella and Joel A. Tropp and Robert J. Webber},
    year={2024},
    eprint={2304.12465},
    archivePrefix={arXiv},
    primaryClass={math.NA},
}

@article{ubaru2017fast,
    author = {Ubaru, Shashanka and Chen, Jie and Saad, Yousef},
    title = {Fast Estimation of \$tr(f(A))\$ via Stochastic Lanczos Quadrature},
    journal = {SIAM Journal on Matrix Analysis and Applications},
    volume = {38},
    number = {4},
    pages = {1075-1099},
    year = {2017},
    doi = {10.1137/16M1104974},
}

@inproceedings{arthur2007kmeans,
    author = {Arthur, David and Vassilvitskii, Sergei},
    title = {{k-means++}: {T}he advantages of careful seeding},
    year = {2007},
    booktitle = {Proceedings of the Eighteenth Annual ACM-SIAM Symposium on Discrete Algorithms},
    url = {https://dl.acm.org/doi/10.5555/1283383.1283494},
}

@inproceedings{higham1990analysis,
    author = {Nicholas J. Higham},
    title = {Analysis of the {Cholesky} Decomposition of a Semi-definite Matrix},
    booktitle = {Reliable Numerical Computation},
    editor = {M. G. Cox and S. J. Hammarling},
    pages = {161-185},
    publisher = {Oxford University Press},
    year = 1990,
    url = {https://eprints.maths.manchester.ac.uk/id/eprint/1193},
}

@article{gonzalez1985clustering,
    title = {Clustering to minimize the maximum intercluster distance},
    journal = {Theoretical Computer Science},
    volume = {38},
    pages = {293-306},
    year = {1985},
    issn = {0304-3975},
    doi = {10.1016/0304-3975(85)90224-5},
    author = {Teofilo F. Gonzalez},
}

@article{engler1997behavior,
    title = {The behavior of the {QR-}factorization algorithm with column pivoting},
    journal = {Applied Mathematics Letters},
    volume = {10},
    number = {6},
    pages = {7-11},
    year = {1997},
    doi = {10.1016/S0893-9659(97)00098-0},
    author = {H. Engler},
}

@inproceedings{makarychev2020improved,
    author = {Makarychev, Konstantin and Reddy, Aravind and Shan, Liren},
    title = {Improved guarantees for {k-means++} and {k-means++} parallel},
    year = {2020},
    booktitle = {Proceedings of the 34th International Conference on Neural Information Processing Systems},
    url = {https://dl.acm.org/doi/10.5555/3495724.3497078},
}

@inproceedings{deshpande2007sampling,
    author = {Deshpande, Amit and Varadarajan, Kasturi},
    title = {Sampling-based dimension reduction for subspace approximation},
    year = {2007},
    doi = {10.1145/1250790.1250884},
    booktitle = {Proceedings of the Thirty-Ninth Annual ACM Symposium on Theory of Computing},
}

@article{frommer2016error,
    author = {Frommer, Andreas and Schweitzer, Marcel},
    title = {Error bounds and estimates for {K}rylov subspace approximations of {S}tieltjes matrix functions},
    year = {2016},
    journal = {BIT Numerical Mathematics},
    pages = {865–892},
    volume = {56},
    number = {3},
    doi = {10.1007/s10543-015-0596-3},
}

@article{natarajan1995sparse,
    author = {Natarajan, B. K.},
    title = {Sparse Approximate Solutions to Linear Systems},
    journal = {SIAM Journal on Computing},
    volume = {24},
    number = {2},
    pages = {227-234},
    year = {1995},
    doi = {10.1137/S0097539792240406},
}

@article{chen2025randomly,
    author = {Chen, Yifan and Epperly, Ethan N. and Tropp, Joel A. and Webber, Robert J.},
    title = {Randomly pivoted {C}holesky: {P}ractical approximation of a kernel matrix with few entry evaluations},
    journal = {Communications on Pure and Applied Mathematics},
    volume = {78},
    number = {5},
    pages = {995-1041},
    doi = {10.1002/cpa.22234},
    year = {2025}
}

@article{huan2023sparse,
    author = {Huan, Stephen and Guinness, Joseph and Katzfuss, Matthias and Owhadi, Houman and Sch\"{a}ofer, Florian},
    title = {Sparse Inverse {C}holesky Factorization of Dense Kernel Matrices by Greedy Conditional Selection},
    journal = {SIAM/ASA Journal on Uncertainty Quantification},
    volume = {13},
    number = {3},
    pages = {1649-1679},
    year = {2025},
    doi = {10.1137/23M1606253},
}

@article{schafer2021sparse,
    author = {Sch\"{a}fer, Florian and Katzfuss, Matthias and Owhadi, Houman},
    title = {Sparse {C}holesky Factorization by {K}ullback--{L}eibler Minimization},
    journal = {SIAM Journal on Scientific Computing},
    volume = {43},
    number = {3},
    pages = {A2019-A2046},
    year = {2021},
    doi = {10.1137/20M1336254},
}

@techreport{rubinstein2008batch,
    author      = {Rubinstein, Ron and Zibulevsky, Michael and Elad, Michael},
    title       = {Efficient Implementation of the {K-SVD} Algorithm using Batch Orthogonal Matching Pursuit},
    institution = {Technion -- Israel Institute of Technology},
    year        = {2008},
    number      = {CS-2008-08},
    url = {https://csaws.cs.technion.ac.il/~ronrubin/Publications/KSVD-OMP-v2.pdf},
}

@article{yeremin2000factorized,
  title={Factorized sparse approximate inverse preconditionings. {III.} {I}terative construction of preconditioners},
  author={Yeremin, Alex Yu and Kolotilina, Lily Yu and Nikishin, AA},
  journal={Journal of Mathematical Sciences},
  volume={101},
  pages={3237--3254},
  year={2000},
  doi={10.1007/BF02672769},
}

@article{vecchia1988estimation,
    title={Estimation and model identification for continuous spatial processes},
    author = {Vecchia, A. V.},
    journal={Journal of the Royal Statistical Society Series B: Statistical Methodology},
    volume={50},
    number={2},
    pages={297--312},
    year={1988},
    doi = {10.1111/j.2517-6161.1988.tb01729.x},
}

@article{dempster1972,
    doi = {https://doi.org/10.2307/2528966},
    author = {A. P. Dempster},
    journal = {Biometrics},
    number = {1},
    pages = {157--175},
    title = {Covariance Selection},
    volume = {28},
    year = {1972}
}

@article{axelsson2000sublinear,
title={On the sublinear and superlinear rate of convergence of conjugate gradient methods},
volume={25}, 
doi={10.1023/a:1016694031362},
number={1–4}, 
journal={Numerical Algorithms}, 
author={Axelsson, Owe and Kaporin, Igor},
year={2000},
pages={1–22},
}

@article{cai2025posterior,
author = {Cai, Difeng and Chow, Edmond and Xi, Yuanzhe},
title = {Posterior Covariance Structures in Gaussian Processes},
journal = {SIAM Journal on Matrix Analysis and Applications},
volume = {46},
number = {2},
pages = {1640-1673},
year = {2025},
doi = {10.1137/24M1684918},
}

@article{kaporin1994new,
author = {Kaporin, I. E.},
title = {New convergence results and preconditioning strategies for the conjugate gradient method},
journal = {Numerical Linear Algebra with Applications},
volume = {1},
number = {2},
pages = {179-210},
doi = {10.1002/nla.1680010208},
year = {1994}
}

@article{golub1965numerical,
    title={Numerical methods for solving linear least squares problems},
    volume={7}, 
    doi={10.1007/bf01436075},
    number={3},
    journal={Numerische Mathematik}, 
    author={Golub, G.},
    year={1965}, 
    pages={206–216},
}

@article{sun2016statistically,
    author = {Ying Sun and Michael L. Stein},
    title = {Statistically and Computationally Efficient Estimating Equations for Large Spatial Datasets},
    journal = {Journal of Computational and Graphical Statistics},
    volume = {25},
    number = {1},
    pages = {187--208},
    year = {2016},
    doi = {10.1080/10618600.2014.975230},
}

@article{tropp2004greed,
    author={Tropp, J.A.},
    journal={IEEE Transactions on Information Theory}, 
    title={Greed is good: {A}lgorithmic results for sparse approximation}, 
    year={2004},
    volume={50},
    number={10},
    pages={2231-2242},
    doi={10.1109/TIT.2004.834793},
}

@book{scholkopf2001learning,
    author = {Schölkopf, Bernhard and Smola, Alexander J.},
    title = {Learning with Kernels: Support Vector Machines, Regularization, Optimization, and Beyond},
    publisher = {The MIT Press},
    year = {2001},
    doi = {10.7551/mitpress/4175.001.0001},
}

@book{golub2013matrix,
author = {Golub, Gene H. and Van Loan, Charles F.},
title = {Matrix Computations - 4th Edition},
publisher = {Johns Hopkins University Press},
year = {2013},
doi = {10.1137/1.9781421407944},
address = {Philadelphia, PA},
edition = {4},
}

@article{guinness2018permutation,
  title={Permutation and grouping methods for sharpening {G}aussian process approximations},
  author={Guinness, Joseph},
  journal={Technometrics},
  volume={60},
  number={4},
  pages={415--429},
  year={2018},
  doi={10.1080/00401706.2018.1437476},
}

@misc{tunnell2025empiricalstudyconjugategradient,
      title={An Empirical Study of Conjugate Gradient Preconditioners for Solving Symmetric Positive Definite Systems of Linear Equations}, 
      author={Marc A. Tunnell and David F. Gleich},
      year={2025},
      eprint={2505.20696},
      archivePrefix={arXiv},
      primaryClass={math.NA},
      url={https://arxiv.org/abs/2505.20696}, 
}

@article{blum1973time,
    title = {Time bounds for selection},
    journal = {Journal of Computer and System Sciences},
    volume = {7},
    number = {4},
    pages = {448-461},
    year = {1973},
    doi = {10.1016/S0022-0000(73)80033-9},
    author = {Manuel Blum and Robert W. Floyd and Vaughan Pratt and Ronald L. Rivest and Robert E. Tarjan},
}

\appendix

\section{Proof of Kaporin optimality theory} \label{app:vecchia_proof}
This section establishes the optimality of the Vecchia approximation as presented as \cref{thm:optimality}.
For notational simplicity, this section assumes the permutation is $\bm{P} = \mathbf{I}$.
Otherwise, we can permute the indices of $\bm{A}$ and apply the proof to the permuted matrix.

To begin, we need formulas for the pseudoinverse and volume of a matrix in terms of its inverse Cholesky decomposition.

\begin{lemma}[Pseudoinverse and volume formulas]
\label{lem:formulas}
Fix a positive-semidefinite matrix with inverse Cholesky decomposition $\bm{A} = \bm{C}^{-1} \bm{D} \bm{C}^{-*}$.
Let $\bm{Q}$ be an orthonormal basis for $\operatorname{range}(\bm{A})$ and set
$\bm{R} = \mathbf{I}(\cdot, \mathsf{G})$, where $\mathsf{G} \subseteq \{1, \ldots, n\}$ picks out the nonzero elements of $\bm{D}$.
Then
\begin{equation}
\label{eq:vol_formula}
    \bm{A}^+ = \bm{Q} \bm{Q}^* \bm{C}^* \bm{D}^+ \bm{C} \bm{Q} \bm{Q}^* \quad \text{and} 
    \quad \operatorname{vol}(\bm{A})
    = \frac{\operatorname{vol}(\bm{D})}{\det(\bm{R}^* \bm{Q})^2}.
\end{equation}
\end{lemma}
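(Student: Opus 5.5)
The plan is to reduce everything to the diagonal matrix $\bm{D}$ by a change of basis. Set $\bm{A} = \bm{C}^{-1}\bm{D}\bm{C}^{-*}$ with $\bm{C}$ unit lower triangular. Then $\operatorname{range}(\bm{A}) = \bm{C}^{-1}\operatorname{range}(\bm{D}) = \operatorname{span}\{\bm{C}^{-1}\bm{e}_j\}_{j\in\mathsf{G}}$, so $\bm{Q}$ and $\bm{C}^{-1}\bm{R}$ span the same space. I will therefore write $\bm{C}^{-1}\bm{R} = \bm{Q}\bm{T}$ for an invertible $|\mathsf{G}|\times|\mathsf{G}|$ matrix $\bm{T} = \bm{Q}^*\bm{C}^{-1}\bm{R}$. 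Both formulas should follow from this factorization. Since $\bm{D} = \bm{R}\bm{D}_{\mathsf{G}}\bm{R}^*$, where $\bm{D}_{\mathsf{G}}$ denotes the positive diagonal block, we get
\begin{equation*}
\bm{A} = \bm{Q}\,\bm{T}\bm{D}_{\mathsf{G}}\bm{T}^*\,\bm{Q}^*.
\end{equation*}

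\textbf{Pseudoinverse.} Because $\bm{Q}$ has orthonormal columns and $\bm{M} = \bm{T}\bm{D}_{\mathsf{G}}\bm{T}^*$ is invertible, $\bm{A}^+ = \bm{Q}\bm{M}^{-1}\bm{Q}^* = \bm{Q}\bm{T}^{-*}\bm{D}_{\mathsf{G}}^{-1}\bm{T}^{-1}\bm{Q}^*$. I then need to identify this with $\bm{Q}\bm{Q}^*\bm{C}^*\bm{D}^+\bm{C}\bm{Q}\bm{Q}^*$, which reduces to showing $\bm{R}^*\bm{C}\bm{Q} = \bm{T}^{-1}$ (note $\bm{D}^+ = \bm{R}\bm{D}_{\mathsf{G}}^{-1}\bm{R}^*$). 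From $\bm{C}^{-1}\bm{R} = \bm{Q}\bm{T}$ we get $\bm{R} = \bm{C}\bm{Q}\bm{T}$, hence $\bm{R}^*\bm{C}\bm{Q}\bm{T} = \bm{R}^*\bm{R} = \mathbf{I}$. Since $\bm{T}$ is square and invertible, $\bm{R}^*\bm{C}\bm{Q} = \bm{T}^{-1}$. This identity is the one slightly delicate point. One might worry that $\bm{C}\bm{Q}$ has components outside $\mathsf{G}$, but that does not matter, because only $\bm{R}^*\bm{C}\bm{Q}$ enters. Substituting gives the first formula.

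\textbf{Volume.} The positive eigenvalues of $\bm{A}$ are the eigenvalues of $\bm{M} = \bm{T}\bm{D}_{\mathsf{G}}\bm{T}^*$. Hence
\begin{equation*}
\operatorname{vol}(\bm{A}) = \det(\bm{M}) = |\det\bm{T}|^2\operatorname{vol}(\bm{D}).
\end{equation*}
Using $\bm{T}^{-1} = \bm{R}^*\bm{C}\bm{Q}$, this becomes $\operatorname{vol}(\bm{D})/|\det(\bm{R}^*\bm{C}\bm{Q})|^2$.

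The main obstacle is reconciling this with the stated denominator $\det(\bm{R}^*\bm{Q})^2$. My first attempt would be to show $\det(\bm{R}^*\bm{C}\bm{Q}) = \det(\bm{R}^*\bm{Q})$ up to a unimodular factor, by exploiting the unit triangular structure of $\bm{C}$. Specifically, for $i\in\mathsf{G}$, row $i$ of $\bm{C}\bm{Q}$ equals $\bm{Q}(i,\cdot)$ plus combinations of rows $j<i$. Those rows with $j\notin\mathsf{G}$ should be handled through the relation $\bm{C}(j,\cdot)\bm{A} = 0$ for $j\notin\mathsf{G}$ (row $j$ of $\bm{C}\bm{A} = \bm{D}\bm{C}^{-*}$ vanishes), which gives $\bm{C}(j,\cdot)\bm{Q} = 0$. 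So $(\bm{C}\bm{Q})(j,\cdot) = 0$ for $j\notin\mathsf{G}$.

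With that established, I would do Gaussian elimination row by row in increasing order of index. Each row $i\in\mathsf{G}$ of $\bm{C}\bm{Q}$ equals row $i$ of $\bm{Q}$ plus a combination of rows $j<i$ of $\bm{Q}$. For $j\in\mathsf{G}$, these are row operations within $\bm{R}^*$-rows, and they do not change the determinant. For $j\notin\mathsf{G}$, the rows $\bm{Q}(j,\cdot)$ would need to be expressed through earlier rows using $(\bm{C}\bm{Q})(j,\cdot)=0$, recursively, which is an induction on the index. If that induction works, it yields $\det(\bm{R}^*\bm{C}\bm{Q}) = \det(\bm{R}^*\bm{Q})$, completing the volume formula.

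The squared absolute value versus square issue should be read as $|\cdot|^2$ in the complex case. I expect this induction is where care is needed.
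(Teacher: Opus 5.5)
Your proposal is correct. Its first half uses the same ingredients as the paper. These are the factorization $\bm{C}^{-1}\bm{R} = \bm{Q}\bm{T}$ with $\bm{T} = \bm{Q}^*\bm{C}^{-1}\bm{R}$ invertible, and the resulting formula $\operatorname{vol}(\bm{A}) = \lvert\det \bm{T}\rvert^2 \operatorname{vol}(\bm{D})$. For the pseudoinverse, the paper only asserts that $\bm{Q}\bm{Q}^*\bm{C}^*\bm{D}^+\bm{C}\bm{Q}\bm{Q}^*$ annihilates $\operatorname{range}(\bm{Q})^\perp$ and satisfies $\bm{A}\bm{A}^+\bm{x} = \bm{x}$ on $\operatorname{range}(\bm{Q})$. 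Your identity $\bm{R}^*\bm{C}\bm{Q} = \bm{T}^{-1}$ is exactly what verifies the second condition, so your version makes that check explicit.

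The real difference is the last step, relating $\det\bm{T}$ to $\det(\bm{R}^*\bm{Q})$. Your induction does go through. Strong induction on $j \notin \mathsf{G}$, using $\bm{C}(j,\cdot)\bm{Q} = \bm{0}$, places each row $\bm{Q}(j,\cdot)$ in the span of the rows $\bm{Q}(m,\cdot)$ with $m \in \mathsf{G}$ and $m < j$. Hence $\bm{R}^*\bm{C}\bm{Q} = \bm{N}\bm{R}^*\bm{Q}$ with $\bm{N}$ unit lower triangular, and the two determinants agree.

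The paper avoids the induction by working with $\bm{C}^{-1}$ instead of $\bm{C}$. Left-multiplying your own factorization by $\bm{R}^*$ gives $\bm{R}^*\bm{C}^{-1}\bm{R} = (\bm{R}^*\bm{Q})\bm{T}$. The left side is a principal submatrix of the unit lower triangular matrix $\bm{C}^{-1}$, so its determinant is one, and therefore $\det(\bm{R}^*\bm{Q}) = 1/\det\bm{T}$ in one line. Your $\bm{N}$ is just $(\bm{R}^*\bm{C}^{-1}\bm{R})^{-1}$, reconstructed row by row.

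Your remark that the denominator must be read as $\lvert\det(\bm{R}^*\bm{Q})\rvert^2$ for complex $\bm{Q}$ is also right. For an arbitrary orthonormal basis this determinant can carry any phase. The paper's proof writes $\det(\bm{Q}^*\bm{C}^{-1}\bm{R})^2$ where the squared modulus is meant. This is harmless for its later use, because the factor cancels in the ratio in \cref{lem:trace_volume}.
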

\begin{proof}
    The pseudoinverse $\bm{A}^+$ is the unique positive-semidefinite matrix that satisfies $\bm{A}^+ \bm{x} = \bm{0}$ if $\bm{x} \perp \operatorname{range}(\bm{Q})$
    and $\bm{A}\bm{A}^+ \bm{x} = \bm{x}$ if $\bm{x} \in \operatorname{range}(\bm{Q})$.
    The matrix $\bm{Q} \bm{Q}^* \bm{C}^* \bm{D}^+ \bm{C} \bm{Q} \bm{Q}^*$ satisfies these conditions, confirming \cref{eq:vol_formula}.
    
    Next observe that $\bm{R}^* \bm{C}^{-1} \bm{R}$ is a lower triangular matrix with ones on the diagonal; hence, it has determinant one.
    Consequently,
    \begin{equation*}
        1 = \det \bigl(\bm{R}^* \bm{C}^{-1} \bm{R}\bigr) = \det \bigl(\bm{R}^* \bm{Q} \bm{Q}^* \bm{C}^{-1} \bm{R} \bigr) 
        = \det(\bm{R}^* \bm{Q}) \det(\bm{Q}^* \bm{C}^{-1} \bm{R}).
    \end{equation*}
    The volume of $\bm{A}$ is given by
    \begin{align*}
        \operatorname{vol}(\bm{A}) 
        &= \operatorname{vol}\bigl(\bm{Q} \bm{Q}^* \bm{C}^{-1} \bm{R} \bm{R}^* \bm{D} \bm{R} \bm{R}^* \bm{C}^{-*} \bm{Q} \bm{Q}^*\bigr) \\
        &= \det\bigl(\bm{Q}^* \bm{C}^{-1} \bm{R}\bigr)^2 \det\bigl(\bm{R}^* \bm{D} \bm{R}\bigr)
        = \operatorname{vol}(\bm{D}) / \det(\bm{R}^* \bm{Q})^2.
    \end{align*}
    This confirms \cref{eq:vol_formula} and completes the proof.
\end{proof}

Next we provide explicit formulas for the trace and volume of $\bm{A} \hat{\bm{A}}^+$, when $\hat{\bm{A}}$ is an approximation with the same range as $\bm{A}$.

\begin{lemma}[Trace and volume formulas] \label{lem:trace_volume}
    Consider two inverse Cholesky decompositions with the same range, $\bm{A} = \bm{C}^{-1} \bm{D} \bm{C}^{-*}$ and $\hat{\bm{A}} = \hat{\bm{C}}^{-1} \hat{\bm{D}} \hat{\bm{C}}^{-*}$.
    Then
    \begin{equation*}
    \operatorname{vol}\bigl(\hat{\bm{A}}^+ \bm{A}\bigr)
    = \frac{\operatorname{vol}\bigl(\bm{A}\bigr)}{\operatorname{vol}\bigl(\hat{\bm{A}}\bigr)}
    =\frac{\operatorname{vol}\bigl(\bm{D}\bigr)}{\operatorname{vol}\bigl(\hat{\bm{D}}\bigr)} \quad \text{and} \quad
    \operatorname{tr}\bigl(\hat{\bm{A}}^+ \bm{A}\bigr)
    = \operatorname{tr}\bigl(\hat{\bm{D}}^+ \hat{\bm{C}} \bm{A}\hat{\bm{C}}^* \bigr).
    \end{equation*}
\end{lemma}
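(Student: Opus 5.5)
The plan is to express both quantities through the pseudoinverse formula of \cref{lem:formulas}, using a common orthonormal basis $\bm{Q}$ for the shared range of $\bm{A}$ and $\hat{\bm{A}}$.

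\textbf{Volume identity.} Since $\bm{A}$ and $\hat{\bm{A}}$ share the range $\operatorname{range}(\bm{Q})$, we can write $\bm{A} = \bm{Q}\bm{M}\bm{Q}^*$ and $\hat{\bm{A}} = \bm{Q}\hat{\bm{M}}\bm{Q}^*$ with $\bm{M},\hat{\bm{M}}$ strictly positive-definite of size $r \times r$. Then $\hat{\bm{A}}^+ = \bm{Q}\hat{\bm{M}}^{-1}\bm{Q}^*$, so $\hat{\bm{A}}^+\bm{A} = \bm{Q}\hat{\bm{M}}^{-1}\bm{M}\bm{Q}^*$. Its nonzero eigenvalues are those of $\hat{\bm{M}}^{-1}\bm{M}$, which are all positive. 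Hence
\[
\operatorname{vol}(\hat{\bm{A}}^+\bm{A}) = \det(\bm{M})/\det(\hat{\bm{M}}) = \operatorname{vol}(\bm{A})/\operatorname{vol}(\hat{\bm{A}}).
\]
For the second equality, apply \cref{lem:formulas} to each decomposition. This gives $\operatorname{vol}(\bm{A}) = \operatorname{vol}(\bm{D})/\det(\bm{R}^*\bm{Q})^2$ and $\operatorname{vol}(\hat{\bm{A}}) = \operatorname{vol}(\hat{\bm{D}})/\det(\hat{\bm{R}}^*\bm{Q})^2$, so the $\det(\cdot)^2$ factors must cancel.

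\textbf{The main obstacle: showing $\mathsf{G} = \hat{\mathsf{G}}$.} The cancellation requires that the index sets of nonzero diagonal entries of $\bm{D}$ and $\hat{\bm{D}}$ coincide. I would characterize $\mathsf{G}$ intrinsically by the range alone. Because $\bm{C}^{-1}$ is unit lower triangular, $\operatorname{range}(\bm{A}) = \operatorname{span}\{\bm{C}^{-1}\bm{e}_i : i \in \mathsf{G}\}$, and $\bm{C}^{-1}\bm{e}_i$ equals $\bm{e}_i$ plus a combination of $\bm{e}_j$ with $j > i$. Now project onto the trailing coordinates $\{k,\ldots,n\}$. The dimension of the projection of $\operatorname{range}(\bm{A})$ onto these coordinates equals $|\mathsf{G}\cap\{k,\ldots,n\}|$: the projected vectors with $i \ge k$ are unit-leading, hence independent, while those with $i < k$ project to zero. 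So $\mathsf{G}$ is determined by these dimensions as $k$ varies, and therefore depends only on the range. Thus $\mathsf{G} = \hat{\mathsf{G}}$, so $\bm{R} = \hat{\bm{R}}$ and the determinant factors cancel.

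\textbf{Trace identity.} Using \cref{lem:formulas}, $\hat{\bm{A}}^+ = \bm{Q}\bm{Q}^*\hat{\bm{C}}^*\hat{\bm{D}}^+\hat{\bm{C}}\bm{Q}\bm{Q}^*$. Since $\bm{Q}\bm{Q}^*\bm{A} = \bm{A}$, cyclicity gives
\[
\operatorname{tr}(\hat{\bm{A}}^+\bm{A}) = \operatorname{tr}(\hat{\bm{C}}^*\hat{\bm{D}}^+\hat{\bm{C}}\bm{Q}\bm{Q}^*\bm{A}\bm{Q}\bm{Q}^*) = \operatorname{tr}(\hat{\bm{C}}^*\hat{\bm{D}}^+\hat{\bm{C}}\bm{A}).
\]
A final cyclic rotation yields $\operatorname{tr}(\hat{\bm{D}}^+\hat{\bm{C}}\bm{A}\hat{\bm{C}}^*)$, as claimed. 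The trace identity is therefore routine; all of the difficulty sits in the range-determines-$\mathsf{G}$ step.
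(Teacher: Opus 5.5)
Your overall structure matches the paper's proof: a common orthonormal basis $\bm{Q}$ for the shared range, cancellation of the $\det(\bm{R}^*\bm{Q})^2$ factors from \cref{lem:formulas}, and the pseudoinverse formula plus cyclicity for the trace. Those parts are fine. The problem is the step you single out as the crux. You claim that for $i<k$ the column $\bm{C}^{-1}\bm{e}_i$ projects to zero on the trailing coordinates $\{k,\ldots,n\}$. By your own description, however, this column is supported on $\{i,\ldots,n\}$, so its entries in positions $k,\ldots,n$ are in general nonzero.

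For a counterexample, take $n=2$, $\bm{C}^{-1} = \begin{bmatrix} 1 & 0 \\ a & 1 \end{bmatrix}$ with $a\neq 0$, and $\bm{D} = \operatorname{diag}(1,0)$. Then $\mathsf{G}=\{1\}$ and $\operatorname{range}(\bm{A}) = \operatorname{span}\{\bm{e}_1 + a\bm{e}_2\}$. The projection of this range onto coordinate $2$ is one-dimensional, although $|\mathsf{G}\cap\{2\}| = 0$. The trailing-projection dimensions record the \emph{last} nonzero positions of vectors in the range, not the first. They therefore do not determine $\mathsf{G}$; for example, $\operatorname{span}\{\bm{e}_1+\bm{e}_2\}$ and $\operatorname{span}\{\bm{e}_2\}$ have the same trailing counts but different $\mathsf{G}$. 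So your claimed dimension identity, and with it your derivation of $\mathsf{G} = \hat{\mathsf{G}}$, fails as written.

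The repair is to reverse the direction. Projecting onto the leading coordinates $\{1,\ldots,k\}$ kills every column $\bm{C}^{-1}\bm{e}_i$ with $i>k$. The columns with $i\le k$ stay unit-leading, hence independent, so the projected range has dimension $|\mathsf{G}\cap\{1,\ldots,k\}|$. Equivalently, $\dim\bigl(\operatorname{range}(\bm{A})\cap\operatorname{span}\{\bm{e}_k,\ldots,\bm{e}_n\}\bigr) = |\mathsf{G}\cap\{k,\ldots,n\}|$. Either count depends only on the range, which gives $\mathsf{G} = \hat{\mathsf{G}}$.

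The paper does essentially this in pointwise form. It shows that $\operatorname{range}(\bm{A})$ contains a vector $\bm{v}$ with $\bm{v}(i)=1$ and $\bm{v}(j)=0$ for $j<i$ if and only if $\bm{D}(i,i)>0$. To see this, expand $\bm{v}$ in the columns $\bm{C}^{-1}\bm{e}_j$, $j\in\mathsf{G}$, and look at the smallest index with a nonzero coefficient. With this correction, the rest of your proof goes through as written.
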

\begin{proof}
First we argue that $\operatorname{range}(\hat{\bm{D}}) = \operatorname{range}(\bm{D})$.
To that end, introduce the affine subspace
\begin{equation*}
    \mathcal{A}_i = \{\bm{v} \in \mathbb{C}^n\,|\, \bm{v}(i) = 1, \bm{v}(j)= 0 \text{ for } j < i\}.
\end{equation*}
for $i = 1, \ldots, n$.
Because of the lower triangular factorization, the range of $\bm{A}$ contains a vector $\bm{v} \in \mathsf{A}_i$ if and only if $\bm{D}(i,i) > 0$.
Similarly, the range of $\hat{\bm{A}}$ contains a vector $\bm{v} \in \mathsf{A}_i$ if and only if $\hat{\bm{D}}(i,i) > 0$.
Since $\operatorname{range}(\bm{A}) = \operatorname{range}(\hat{\bm{A}})$, it follows that $\operatorname{range}(\hat{\bm{D}}) = \operatorname{range}(\bm{D})$.

Next we take an orthonormal basis $\bm{Q}$ for $\operatorname{range}\bigl(\hat{\bm{A}}\bigr) = \operatorname{range}(\bm{A})$ and an orthonormal basis $\bm{R}$ for $\operatorname{range}\bigl(\hat{\bm{D}}\bigr) = \operatorname{range}(\bm{D})$. 
To calculate $\operatorname{vol}\bigl(\hat{\bm{A}}^+ \bm{A}\bigr)$, we observe
\begin{align*}
    \operatorname{vol}\bigl(\hat{\bm{A}}^+ \bm{A}\bigr)
    &= \operatorname{vol}\bigl(\bm{Q} \bm{Q}^* \hat{\bm{A}}^+ \bm{Q} \bm{Q}^* \bm{A} \bm{Q} \bm{Q}^* \bigr) \\
    &= \operatorname{det}\bigl(\bm{Q}^* \hat{\bm{A}}^+ \bm{Q}\bigr)
   \operatorname{det}\bigl(\bm{Q}^* \bm{A} \bm{Q}\bigr)
    = \operatorname{vol}\bigl(\hat{\bm{A}}^+\bigr) \operatorname{vol}\bigl(\bm{A}\bigr) 
    = \frac{\operatorname{vol}\bigl(\bm{A}\bigr)}{\operatorname{vol}\bigl(\hat{\bm{A}}\bigr)}.
\end{align*}
The last line uses the fact that $\operatorname{vol}\bigl(\hat{\bm{A}}^+\bigr)$ is the inverse of  $\operatorname{vol}\bigl(\hat{\bm{A}}\bigr)$.
Next we apply the exact formulas for $\operatorname{vol}\bigl(\bm{A}\bigr)$ and $\operatorname{vol}\bigl(\hat{\bm{A}}\bigr)$ given in \cref{lem:formulas}.
\begin{equation*}
    \frac{\operatorname{vol}\bigl(\bm{A}\bigr)}{\operatorname{vol}\bigl(\hat{\bm{A}}\bigr)} 
    = \frac{\operatorname{vol}\bigl(\bm{D}\bigr) / \operatorname{det}(\bm{R}^* \bm{Q})^2}{\operatorname{vol}\bigl(\hat{\bm{D}}\bigr)  / \operatorname{det}(\bm{R}^* \bm{Q})^2}
    = \frac{\operatorname{vol}\bigl(\bm{D}\bigr)}{\operatorname{vol}\bigl(\hat{\bm{D}}\bigr)}.
\end{equation*}
Similarly, we calculate $\operatorname{tr}(\hat{\bm{A}}^+ \bm{A})$ by applying the exact formula for the pseudoinverse $\hat{\bm{A}}$ given in \cref{lem:formulas}.
\begin{align*}
    & \operatorname{tr}\bigl(\hat{\bm{A}}^+ \bm{A}\bigr)
    = \operatorname{tr}\bigl(\bm{Q} \bm{Q}^* \hat{\bm{C}}^* \hat{\bm{D}}^+ \hat{\bm{C}} \bm{Q} \bm{Q}^* \bm{A} \bigr) \\
    &= \operatorname{tr}\bigl(\hat{\bm{D}}^+ \hat{\bm{C}} \bm{Q} \bm{Q}^* \bm{A} \bm{Q} \bm{Q}^*  \hat{\bm{C}}^* \bigr)
    = \operatorname{tr}\bigl(\hat{\bm{D}}^+ \hat{\bm{C}} \bm{A}\hat{\bm{C}}^* \bigr)
\end{align*}
This completes the proof.
\end{proof}

Now we establish the main result, \cref{thm:optimality}.

\begin{proof}[Proof of \cref{thm:optimality}]

Let $\bm{A} = \bm{C}^{-1} \bm{D} \bm{C}^{-*}$ be an \emph{exact} inverse Cholesky decomposition.
We will derive the inverse Cholesky approximation $\hat{\bm{A}} = \hat{\bm{C}}^{-1} \hat{\bm{D}} \hat{\bm{C}}^{-*}$ with the given sparsity pattern $(\mathsf{S}_i)_{i=1}^n$ that minimizes $\kappa_{\rm Kap}$.
We will focus on the case where $\bm{A}$ and $\hat{\bm{A}}$ share the same range, because otherwise $\kappa_{\rm Kap} = \infty$.

We partition the indices $\{1, \ldots, n\}$ into a set of ``good'' indices $\mathsf{G} = \{i\,|\,\bm{D}(i,i) > 0\}$ and a set of ``bad'' indices $\mathsf{B} = \{i\,|\,\bm{D}(i,i) = 0\}$.
Since $\operatorname{rank}(\bm{A}) = \operatorname{rank}(\bm{D})$, the number of good indices is $r = \operatorname{rank}(\bm{A})$ and the number of bad indices is $n - r$.
We will consider the good and bad indices separately.

We first consider what happens to a bad index $i \in \mathsf{B}$.
Since $\bm{D}(i,i) = 0$ and $\bm{A}$ and $\hat{\bm{A}}$ share a nullspace, it follows that
\begin{equation*}
    \bm{0} = \bm{C}(i, \cdot) \bm{A} = \bm{C}(i, \cdot) \hat{\bm{A}} = \bm{C}(i, \cdot) \hat{\bm{C}}^{-1} \hat{\bm{D}}.
\end{equation*}
We are using the fact that $\hat{\bm{A}}$ has the same left nullspace as $\hat{\bm{C}}^{-1} \hat{\bm{D}}$.
Next we observe that $\bm{C} \hat{\bm{C}}^{-1}$ is lower triangular with ones on the diagonal, so we must have $(\bm{C} \hat{\bm{C}}^{-1})(i,i) = 1$ and consequently $\hat{\bm{D}}(i,i) = 0$.
Since $\hat{\bm{D}}(i,i) = 0$ and also $\bm{A}$ and $\hat{\bm{A}}$ share a nullspace, it follows that
\begin{equation*}
    \bm{0} = \hat{\bm{C}}(i, \cdot) \hat{\bm{A}} = \hat{\bm{C}}(i, \cdot) \bm{A},
\end{equation*}
and we can write
\begin{equation*}
    \begin{bmatrix} \hat{\bm{C}}(i, \mathsf{S}_i) & 1 \end{bmatrix} \begin{bmatrix} \bm{A}(\mathsf{S}_i, \mathsf{S}_i) & \bm{A}(\mathsf{S}_i, i) \\
    \bm{A}(i, \mathsf{S}_i) & \bm{A}(i, i) \end{bmatrix}
    = \begin{bmatrix} \bm{0} & \hat{\bm{D}}(i,i) \end{bmatrix}.
\end{equation*}
This is precisely the definition of the Vecchia approximation given in \cref{def:vecchia_approximation}.

Now we focus on the good indices $i \in \mathsf{G}$.
To that end, we rewrite $\kappa_{\rm Kap}$ using the volume and trace formulas from \cref{lem:trace_volume}.
\begin{equation*}
    \kappa_{\rm Kap} = \frac{\bigl(\frac{1}{r} \operatorname{tr} \bigl(\bm{A} \hat{\bm{A}}^+\bigr)\bigr)^r}
    {\operatorname{vol} \bigl(\bm{A} \hat{\bm{A}}^+\bigr)}
    = \frac{\operatorname{vol}\bigl(\hat{\bm{D}}\bigr)\operatorname{tr}\bigl(\hat{\bm{D}}^+ \hat{\bm{C}}^* \bm{A}\hat{\bm{C}} \bigr)^r}{r^r \operatorname{vol}\bigl(\bm{D}\bigr)}.
\end{equation*}
We calculate the logarithmic derivative
\begin{equation*}
    \partial_{\hat{\bm{D}}(i,i)} \log(\kappa_{\rm Kap}) = \frac{1}{\hat{\bm{D}}(i,i)} - \frac{r\bigl(\hat{\bm{C}} \bm{A} \hat{\bm{C}}^*\bigr)(i,i)}{\hat{\bm{D}}(i,i)^2\operatorname{tr}\bigl(\hat{\bm{D}}^+ \hat{\bm{C}} \bm{A} \hat{\bm{C}}^* \bigr)}.
\end{equation*}
The logarithmic derivative is negative for small $\hat{\bm{D}}(i,i)$ and positive for large $\hat{\bm{D}}(i,i)$, and it achieves a zero value when 
\begin{equation*}
    \frac{\bigl(\hat{\bm{C}} \bm{A} \hat{\bm{C}}^*\bigr)(i,i)}{\hat{\bm{D}}(i,i)}
    = \frac{\operatorname{tr}\bigl(\hat{\bm{D}}^+ \hat{\bm{C}} \bm{A}\hat{\bm{C}}^* \bigr)}{r}
\end{equation*}
We select the minimizer $\hat{\bm{D}}(i,i) = \bigl(\hat{\bm{C}} \bm{A} \hat{\bm{C}}^*\bigr)(i,i)$ for each $i \in \mathsf{G}$.
Then the Kaporin condition number becomes
\begin{equation*}
    \kappa_{\rm Kap} = \frac{\operatorname{vol}\bigl(\hat{\bm{D}}\bigr)}{\operatorname{vol}\bigl(\bm{D}\bigr)}
    = \frac{1}{\operatorname{vol}\bigl(\bm{D}\bigr)} \prod_{i \in \mathsf{G}} \bigl(\hat{\bm{C}} \bm{A} \hat{\bm{C}}^*\bigr)(i,i).
\end{equation*}
Last, we expand the square.
\begin{align*}
    & \bigl(\hat{\bm{C}} \bm{A} \hat{\bm{C}}^*\bigr)(i,i) \\
    &= \hat{\bm{C}}(i, \mathsf{S}_i) \bm{A}(\mathsf{S}_i, \mathsf{S}_i)
    \hat{\bm{C}}(i, \mathsf{S}_i)^*
    + \hat{\bm{C}}(i, \mathsf{S}_i) \bm{A}(\mathsf{S}_i, i) 
    + \bm{A}(i, \mathsf{S}_i)
    \hat{\bm{C}}(i, \mathsf{S}_i)^*
    + \bm{A}(i,i) \\
    &= \bigl[\hat{\bm{C}}(i, \mathsf{S}_i) \bm{A}(\mathsf{S}_i, \mathsf{S}_i) + \bm{A}(i, \mathsf{S}_i)\bigr] \bm{A}(\mathsf{S}_i, \mathsf{S}_i)^+ \bigl[\bm{A}(\mathsf{S}_i, \mathsf{S}_i) \hat{\bm{C}}(\mathsf{S}_i, i) + \bm{A}(\mathsf{S}_i, i)\bigr] \\
    & \qquad + \bigl[\bm{A}(i, i) - \bm{A}(i, \mathsf{S}_i) \bm{A}(\mathsf{S}_i, \mathsf{S}_i)^+ \bm{A}(\mathsf{S}_i, i)\bigr].
\end{align*}
Here we have used the fact that $\bm{A}(\mathsf{S}_i, i) \in \operatorname{range}\bigl(\bm{A}(\mathsf{S}_i, \mathsf{S}_i)\bigr)$, because $\bm{A}$ is positive-semidefinite.
The minimizer of $\bigl(\hat{\bm{C}} \bm{A} \hat{\bm{C}}^*\bigr)(i,i)$ is a row vector $\hat{\bm{C}}(i, \mathsf{S}_i)$ that is characterized by
\begin{equation*}
    \hat{\bm{C}}(i, \mathsf{S}_i) \bm{A}(\mathsf{S}_i, \mathsf{S}_i) + \bm{A}(i, \mathsf{S}_i) = \bm{0}.
\end{equation*}
The minimum value is
\begin{equation}
\label{eq:minimum}
    d_{\bm{A}}\bigl(\bm{e}_i, \operatorname{span}\{\bm{e}_j\}_{j \in \mathsf{S}_i} \bigr)^2
    = \bigl(\hat{\bm{C}} \bm{A} \hat{\bm{C}}^*\bigr)(i,i)
    = \hat{\bm{C}}(i, \mathsf{S}_i)
    \bm{A}(\mathsf{S}_i, i)
    + \bm{A}(i,i).
\end{equation}
At this point, we have shown that
\begin{equation*}
    \begin{bmatrix} \hat{\bm{C}}(i, \mathsf{S}_i) & 1 \end{bmatrix} \begin{bmatrix} \bm{A}(\mathsf{S}_i, \mathsf{S}_i) & \bm{A}(\mathsf{S}_i, i) \\
    \bm{A}(i, \mathsf{S}_i) & \bm{A}(i, i) \end{bmatrix}
    = \begin{bmatrix} \bm{0} & \hat{\bm{D}}(i,i) \end{bmatrix}.
\end{equation*}
This is precisely the definition of the Vecchia approximation given in \cref{def:vecchia_approximation}.

In conclusion, the Vecchia approximation achieves the minimal Kaporin condition number, which can be written as
\begin{equation*}
    \kappa_{\rm Kap} = \frac{1}{\operatorname{vol}\bigl(\bm{D}\bigr)} \prod_{i \in \mathsf{G}} d_{\bm{A}}\bigl(\bm{e}_i, \operatorname{span}\{\bm{e}_j\}_{j \in \mathsf{S}_i} \bigr)^2,
\end{equation*}
using \cref{eq:minimum}.
In the particular case where the sparsity pattern is $\mathsf{S}_i = \{1, \ldots, i-1\}$ for $i = 1, \ldots, n$, the minimal Kaporin condition number is $\kappa_{\rm Kap} = 1$.
Hence,
\begin{equation*}
    \operatorname{vol}\bigl(\bm{D}\bigr)
    = \prod_{i \in \mathsf{G}} d_{\bm{A}}\bigl(\bm{e}_i, \operatorname{span}\{\bm{e}_j\}_{j < i} \bigr)^2.
\end{equation*}
This completes the proof.
\end{proof}

\section{Applications of the Kaporin condition number}

The section proves several upper bounds for linear algebra calculations in terms of the Kaporin condition number.

\subsection{Proof of \texorpdfstring{\cref{prop:direct}}{Proposition 3.6}}
\label{sec:direct}

We can use the fact that $\bm{b} - \bm{A} \bm{x}_\star$ is orthogonal to $\operatorname{range}(\bm{A}) = \operatorname{range}(\hat{\bm{A}})$ to calculate
\begin{align*}
    \lVert \hat{\bm{x}} - \bm{x}_{\star} \rVert_{\bm{A}}
    &= \lVert \bm{x}_0 + \hat{\bm{A}}^+[\bm{b} - \bm{A} \bm{x}_0] - \bm{x}_{\star} \rVert_{\bm{A}} \\
    &= \bigl\lVert \bigl[\mathbf{I} - \hat{\bm{A}}^+ \bm{A} \bigr] \bigl[\bm{x}_0 - \bm{x}_{\star}\bigr] \bigr\rVert_{\bm{A}} \\
    &= \bigl\lVert \bigl[\mathbf{I} - \bm{A}^{1/2} \hat{\bm{A}}^+ \bm{A}^{1/2} \bigr] \bm{A}^{1/2} \bigl[\bm{x}_0 - \bm{x}_{\star}\bigr] \bigr\rVert \\
    &\leq \lVert \mathbf{I} - \bm{A}^{1/2} \hat{\bm{A}}^+ \bm{A}^{1/2}\rVert \lVert \hat{\bm{x}} - \bm{x}_0 \rVert_{\bm{A}}
\end{align*}
The eigenvalues of $\bm{A}^{1/2} \hat{\bm{A}}^+ \bm{A}^{1/2}$ are the eigenvalues of $\bm{A} \hat{\bm{A}}^+$, which we write as $\lambda_1 \geq \cdots \geq \lambda_r > 0$.
Therefore,
\begin{equation*}
    \frac{\lVert \hat{\bm{x}} - \bm{x}_{\star} \rVert_{\bm{A}}}{\lVert \hat{\bm{x}} - \bm{x}_0 \rVert_{\bm{A}}}
    \leq \lVert \mathbf{I} - \bm{A}^{1/2} \hat{\bm{A}}^+ \bm{A}^{1/2}\rVert = \max_{1 \leq i \leq r} |1 - \lambda_i|,
\end{equation*}
which gives a sharp upper bound.

Given that $\sum_{j=1}^r \lambda_j = r$, the concavity of $x \mapsto \log x$ shows that $\sum_{j=1}^r \log(\lambda_j)$ is maximized when the eigenvalues $\lambda_j$ for $j \neq i$ are equal.
Again using $\sum_{j=1}^r \lambda_j = r$, we obtain
\begin{equation*}
    -\log(\kappa_{\rm Kap}) = \sum_{j=1}^r \log(\lambda_j)
    \leq \log(\lambda_i) + (r-1)\log\biggl(\frac{r-\lambda_i}{r-1}\biggr).
\end{equation*}
Over the interval $(0, r]$, we can bound $\log(x)$ from above by a concave quadratic that passes through $(1, 0)$ and $(r, \frac{r}{2} - \frac{1}{2r})$, so
\begin{equation*}
    \log(\lambda_i) \leq -1 + \lambda_i - \frac{1}{2r}(1 - \lambda_i)^2.
\end{equation*}
Also, $\log(1 + x) \leq x$ holds globally, so
\begin{equation*}
    (r-1) \log\biggl(\frac{r-\lambda_i}{r-1}\biggr)
    = (r-1) \log\biggl(1 + \frac{1-\lambda_i}{r-1}\biggr) \leq 1-\lambda_i.
\end{equation*}
It follows that
\begin{equation*}
    -\log(\kappa_{\rm Kap})
    \leq \log(\lambda_i) + (r-1)\log\biggl(\frac{r-\lambda_i}{r-1}\biggr) \leq
    -\frac{1}{2r}(1 - \lambda_i)^2.
\end{equation*}
We conclude that $(1 - \lambda_i)^2 \leq 2r \log(\kappa_{\rm Kap})$ for each $i = 1, \ldots, r$, which completes the proof.

\subsection{Proof of \texorpdfstring{\cref{prop:pcg}}{Theorem 3.7}} \label{sec:pcg_proof}

The starting point is a classic error bound for PCG iterates \cite[eq.~(3.4)]{axelsson2000sublinear}.
\begin{equation*}
    \frac{\lVert \bm{x}_t - \bm{x}_{\star} \rVert_{\bm{A}}}{\lVert \bm{x}_0 - \bm{x}_{\star} \rVert_{\bm{A}}} \leq \max_{1 \leq i \leq r} \bigl|p_t(\lambda_i)|.
\end{equation*}
Here, $p_t$ is any degree-$t$ polynomial satisfying $p_t(0) = 1$, and $\lambda_1 \geq \cdots \geq \lambda_r$ are the sorted positive eigenvalues of $\bm{A} \hat{\bm{A}}^+$.
In the case of even $t$, we construct
\begin{equation*}
    p_t(\lambda) = \prod_{i=1}^{t/2} \biggl(1 - \frac{\lambda}{\lambda_i}\biggr)\biggl(1 - \frac{\lambda}{\lambda_{r+1-i}}\biggr).
\end{equation*}
In the case of odd $t$, we set $t_\star = (t+1)/2$ and construct
\begin{equation*}
    p_t(\lambda) = \biggl(1 - \frac{2\lambda}{\lambda_{t_{\star}} + \lambda_{r+1-t_{\star}}} \biggr)\prod_{i=1}^{(t-1)/2} \biggl(1 - \frac{\lambda}{\lambda_i}\biggr)\biggl(1 - \frac{\lambda}{\lambda_{r+1-i}}\biggr).
\end{equation*}
Now observe that
\begin{equation*}
    \max_{1 \leq i \leq r} \bigl|p_t(\lambda_i)|
    = \max_{\lfloor t/2 \rfloor \leq i \leq r + 1 - \lfloor t/2 \rfloor} \bigl|p_t(\lambda_i)|,
\end{equation*}
and the right-hand side is bounded by the product of terms
\begin{equation*}
    \max_{\lambda_i \leq \lambda \leq \lambda_{r+1 - i}} \biggl(\frac{\lambda}{\lambda_i} - 1\biggr)\biggl(1 - \frac{\lambda}{\lambda_{r+1-i}}\biggr) = \frac{(\lambda_i - \lambda_{r+1-i})^2}{4 \lambda_i \lambda_{r+1-i}}
\end{equation*}
and potentially a term
\begin{equation*}
    \max_{\lambda_{t_{\star}} \leq \lambda \leq  \lambda_{r+1-t_{\star}}}\biggl|1 - \frac{2\lambda}{\lambda_{t_{\star}} + \lambda_{r+1-t_{\star}}} \biggr|
    = \frac{\lambda_{t_{\star}} - \lambda_{r+1-t_{\star}}}{\lambda_{t_{\star}} + \lambda_{r+1-t_{\star}}}
    \leq \frac{\lambda_{t_{\star}} - \lambda_{r+1-t_{\star}}}{2 (\lambda_{t_{\star}} \lambda_{r+1-t_{\star}})^{1/2}},
\end{equation*}
where the last line follows from the arithmetic-geometric mean inequality
\begin{equation*}
    (\lambda_{t_{\star}} \lambda_{r+1-t_{\star}})^{1/2} \leq \frac{\lambda_{t_{\star}} + \lambda_{r+1-t_{\star}}}{2}.
\end{equation*}
In the case of even $t$, we can use the arithmetic-geometric mean inequality twice to write
\begin{align*}
    & \prod_{i=1}^{t/2} \biggl(\frac{4 \lambda_i \lambda_{r+1-i}}{(\lambda_i + \lambda_{r+1-i})^2}\biggr)^{2/t}
    + \prod_{i=1}^{t/2} \biggl(\frac{(\lambda_i - \lambda_{r+1-i})^2}{(\lambda_i + \lambda_{r+1-i})^2}\biggr)^{2/t} \\
    &\leq \frac{2}{t} \sum_{i=1}^{t/2} \frac{4 \lambda_i \lambda_{r+1-i} + (\lambda_i - \lambda_{r+1-i})^2}{(\lambda_i + \lambda_{r+1-i})^2} = 1,
\end{align*}
and consequently
\begin{equation*}
    1 + \biggl[\frac{\lVert \bm{x}_t - \bm{x}_{\star} \rVert_{\bm{A}}}{\lVert \bm{x}_0 - \bm{x}_{\star} \rVert_{\bm{A}}}\biggr]^{2/t} \leq 1 + \prod_{i=1}^{t/2} \biggl(\frac{(\lambda_i - \lambda_{r+1-i})^2}{4 \lambda_i \lambda_{r+1-i}}\biggr)^{2/t}
    \leq \prod_{i=1}^{t/2} \biggl(\frac{(\lambda_i + \lambda_{r+1-i})^2}{4 \lambda_i \lambda_{r+1-i}}\biggr)^{2/t}.
\end{equation*}
In the case of odd $t$, we can use the generalized arithmetic-geometric mean inequality with weights $1/t, 2/t, \ldots, 2/t$ twice to write
\begin{align*}
    & \biggl(\frac{4 \lambda_{t_{\star}} \lambda_{r+1-t_{\star}}}{(\lambda_{t_{\star}} + \lambda_{r+1-t_{\star}})^2}\biggr)^{1/t} \prod_{i=1}^{(t-1)/2} \biggl(\frac{4 \lambda_i \lambda_{r+1-i}}{(\lambda_i + \lambda_{r+1-i})^2}\biggr)^{2/t} \\
    & \quad + \biggl(\frac{(\lambda_{t_{\star}} -  \lambda_{r+1-t_{\star}})^2}{(\lambda_{t_{\star}} + \lambda_{r+1-t_{\star}})^2}\biggr)^{1/t}\prod_{i=1}^{(t-1)/2} \biggl(\frac{(\lambda_i - \lambda_{r+1-i})^2}{(\lambda_i + \lambda_{r+1-i})^2}\biggr)^{2/t} \\
    &\leq \frac{1}{t} \frac{4 \lambda_{t_{\star}} \lambda_{r+1-t_{\star}} + (\lambda_{t_{\star}} - \lambda_{r+1-t_{\star}})^2}{(\lambda_{t_{\star}} + \lambda_{r+1-t_{\star}})^2}
    + \frac{2}{t} \sum_{i=1}^{(t-1)/2} \frac{4 \lambda_i \lambda_{r+1-i} + (\lambda_i - \lambda_{r+1-i})^2}{(\lambda_i + \lambda_{r+1-i})^2} = 1,
\end{align*}
and consequently
\begin{align*}
    & 1 + \biggl[\frac{\lVert \bm{x}_t - \bm{x}_{\star} \rVert_{\bm{A}}}{\lVert \bm{x}_0 - \bm{x}_{\star} \rVert_{\bm{A}}}\biggr]^{2/t} \\
    &\leq 1 + \biggl(\frac{(\lambda_{t_{\star}} -  \lambda_{r+1-t_{\star}})^2}{4 \lambda_{t_{\star}} \lambda_{r+1-t_{\star}}}\biggr)^{1/t} \prod_{i=1}^{(t-1)/2} \biggl(\frac{(\lambda_i - \lambda_{r+1-i})^2}{4 \lambda_i \lambda_{r+1-i}}\biggr)^{2/t} \\
    &\leq \biggl(\frac{(\lambda_{t_{\star}} +  \lambda_{r+1-t_{\star}})^2}{4 \lambda_{t_{\star}} \lambda_{r+1-t_{\star}}}\biggr)^{1/t} \prod_{i=1}^{t/2} \biggl(\frac{(\lambda_i + \lambda_{r+1-i})^2}{4 \lambda_i \lambda_{r+1-i}}\biggr)^{2/t}.
\end{align*}
Last, when $r$ is even, we can use the arithmetic-geometric mean inequality two more times to write
\begin{equation*}
    \biggl(1 + \biggl[\frac{\lVert \bm{x}_t - \bm{x}_{\star} \rVert_{\bm{A}}}{\lVert \bm{x}_0 - \bm{x}_{\star} \rVert_{\bm{A}}}\biggr]^{2/t}\biggr)^{t/2} \leq \frac{1}{\prod_{i=1}^r \lambda_i} \prod_{i=1}^{r/2} \biggl(\frac{\lambda_i + \lambda_{r+1-i}}{2}\biggr)^2
    \leq \kappa_{\rm Kap}.
\end{equation*}
Similarly when $r$ is odd, we can use the generalized arithmetic-geometric mean inequality two more times to obtain
\begin{equation*}
    \biggl(1 + \biggl[\frac{\lVert \bm{x}_t - \bm{x}_{\star} \rVert_{\bm{A}}}{\lVert \bm{x}_0 - \bm{x}_{\star} \rVert_{\bm{A}}}\biggr]^{2/t}\biggr)^{t/2} \leq \frac{1}{\prod_{i=1}^r \lambda_i} \lambda_{(r+1)/2} \prod_{i=1}^{(r-1)/2} \biggl(\frac{\lambda_i + \lambda_{r+1-i}}{2}\biggr)^2
    \leq \kappa_{\rm Kap}.
\end{equation*}
We conclude by recalling that
\begin{align*}
    2x &= \int_0^x 2 \mathrm{d}y \leq \int_0^x \frac{2 \mathrm{d}y}{1 - y^2}
    = \int_0^x \biggl[\frac{1}{1 - y} + \frac{1}{1 + y}\biggr] \mathrm{d}y \\
    &= \biggl[-\log(1-y) + \log(1 + y)\biggr]^{y=x}_{y=0} = \log\biggl(\frac{1+x}{1-x}\biggr)
\end{align*}
and therefore ${\rm e}^{2x} - 1 \leq \frac{2x}{1-x}$. Therefore, we make the calculation
\begin{align*}
    & \frac{\lVert \bm{x}_t - \bm{x}_{\star} \rVert_{\bm{A}}}{\lVert \bm{x}_0 - \bm{x}_{\star} \rVert_{\bm{A}}} \leq \bigl(\kappa_{\rm Kap}^{2/t} - 1\bigr)^{t/2}
    \leq \biggl(\frac{2 \log (\kappa_{\rm Kap})/t}{1 - \log (\kappa_{\rm Kap})/t}\biggr)^{t/2} \\
    &= \biggl(\frac{2 \log (\kappa_{\rm Kap})}{2t/3 + [t/3 - \log (\kappa_{\rm Kap})]}\biggr)^{t/2} \leq \biggl(\frac{3 \log (\kappa_{\rm Kap})}{t}\biggr)^{t/2}
\end{align*}
on the event that $t/3 \geq \log(\kappa_{\rm Kap})$.
On the other hand, if $t/3 < \log(\kappa_{\rm Kap})$, the result \cref{eq:superlinear} holds vacuously.
This completes the proof.

\subsection{Proof of \texorpdfstring{\cref{prop:stochastic_det}}{Theorem 3.8}}
\label{sec:det_proof}
We prove the result for complex-valued vectors; the real-valued case is similar.
Consider the matrix $\bm{B} = \hat{\bm{A}}^{-1/2} \bm{A} \hat{\bm{A}}^{-1/2}$ which has eigendecomposition
\begin{equation*}
\bm{B} = \bm{Q} \bm{\Lambda} \bm{Q}^*
\quad \text{where} \quad 
\bm{\Lambda} = \operatorname{diag}(\lambda_1, \ldots, \lambda_n).
\end{equation*}
We need to prove the mean square error bound
\begin{equation*}
    \mathbb{E}\Biggl|\operatorname{tr}(\log \bm{B} )
    - \frac{1}{t}\sum_{i=1}^t \bm{z}_i^* (\log \bm{B}) \bm{z}_i\Biggr|^2
    \leq \frac{4 \log(\kappa_{\rm Kap})}{t}.
\end{equation*}
To that end, we introduce a complex Gaussian vector $\bm{\omega} \sim \mathcal{N}(\bm{0}, \mathbf{I})$ and set $\bm{z} = \sqrt{n}\, \bm{\omega} / \lVert \bm{\omega}\rVert$.
It suffices to show 
\begin{equation}
\label{eq:mean_variance}
    \mathbb{E}[\bm{z}^* (\log \bm{B}) \bm{z}] = \operatorname{tr}(\log \bm{B})
    \quad \text{and} \quad
    \operatorname{Var}[\bm{z}^* (\log \bm{B}) \bm{z}] \leq 4 \log(\kappa_{\rm Kap})
\end{equation}
when the Kaporin condition number is $\log(\kappa_{\rm Kap}) \leq n$.

We first observe that rotational invariance implies $\bm{z} \stackrel{\mathcal{D}}{=} \bm{Q}^* \bm{z}$ so 
\begin{equation*}
    \bm{z}^* (\log\bm{B}) \bm{z} \stackrel{\mathcal{D}}{=} \bm{z}^* (\log\bm{\Lambda}) \bm{z}
    = \sum_{i=1}^n \log (\lambda_i) |\bm{z}(i)|^2.
\end{equation*}
Then we check the stochastic trace estimator is unbiased.
\begin{equation}
\label{eq:mean}
    \mathbb{E}\Biggl[\sum_{i=1}^n \log (\lambda_i) |\bm{z}(i)|^2 \Biggr]
    = \sum_{i=1}^n \log (\lambda_i)\, \mathbb{E}|\bm{z}(i)|^2 = \sum_{i=1}^n \log (\lambda_i) = \operatorname{tr}(\log \bm{B}).
\end{equation}
This confirms the first part of \cref{eq:mean}.

To bound the variance, we make a calculation using the Gaussian vector $\bm{\omega} \sim \mathcal{N}(\bm{0}, \mathbf{I})$.
\begin{align*}
    &\mathbb{E}\Biggl|\sum_{i=1}^n \log (\lambda_i) |\bm{\omega}(i)|^2 \Biggr|^2
    - \Biggl|\sum_{i=1}^n \log (\lambda_i) \Biggr|^2
    = \operatorname{Var}\Biggl[\sum_{i=1}^n \log (\lambda_i) |\bm{\omega}(i)|^2 \Biggr] \\
    &= \sum_{i=1}^n (\log \lambda_i)^2 \operatorname{Var}\bigl[|\bm{\omega}(i)|^2 \bigr] = \sum_{i=1}^n (\log \lambda_i)^2.
\end{align*}
Here we have used the independence of $\bm{\omega}(i)$ variables and the identity $\operatorname{Var}\bigl[|\bm{\omega}(i)|^2 \bigr] = 1$, which holds for complex Gaussians.
Since $\bm{\omega}$ has independent length and direction, we calculate
\begin{equation*}
    \mathbb{E}\Biggl|\sum_{i=1}^n \log (\lambda_i) |\bm{\omega}(i)|^2 \Biggr|^2 = \frac{\mathbb{E} \lVert \bm{\omega} \rVert^4}{n^2} \,\mathbb{E}\Biggl|\sum_{i=1}^n \log (\lambda_i) |\bm{z}(i)|^2 \Biggr|^2
    = \frac{n^2 + n}{n^2} \,\mathbb{E}\Biggl|\sum_{i=1}^n \log (\lambda_i) |\bm{z}(i)|^2 \Biggr|^2,
\end{equation*}
where again we have used the fact that $\operatorname{Var}\bigl[|\bm{\omega}(i)|^2 \bigr] = 1$.
By rearrangement, it follows
\begin{equation*}
    \mathbb{E}\Biggl|\sum_{i=1}^n \log(\lambda_i) |\bm{z}(i)|^2 \Biggr|^2 = \frac{n}{n+1}\Biggl[\sum_{i=1}^n (\log \lambda_i)^2 + \Biggl(\sum_{i=1}^n \log(\lambda_i) \Biggr)^2 \Biggr].
\end{equation*}
Subtracting the square mean \cref{eq:mean} shows that
\begin{align*}
    & \operatorname{Var}\Biggl[\sum_{i=1}^n \log(\lambda_i) |\bm{z}(i)|^2\Biggr] \\
    &= \frac{n}{n+1}\Biggl[ \sum_{i=1}^n (\log \lambda_i)^2 - \frac{1}{n} \Biggl(\sum_{i=1}^n \log(\lambda_i) \Biggr)^2  \Biggr]
    = \frac{n}{n+1} \sum_{i=1}^n (\log \lambda_i')^2,
\end{align*}
where we have introduced $\lambda_i' = \lambda_i / \bigl(\prod_{j=1}^n \lambda_j\bigr)^{1/n}$ and we observe that $\prod_{i=1}^n \lambda_i' = 1$ and $\frac{1}{n} \sum_{i=1}^n \lambda_i' = \kappa_{\rm Kap}^{1/n}$.
Then recall the standard identity that
\begin{equation*}
    1 + t + \frac{1}{2} t^2 \leq {\rm e}^t
    \quad \text{and therefore} \quad
    (\log u)^2 \leq 2(u - 1 - \log u).
\end{equation*}
We obtain the variance bound
\begin{align*}
    \operatorname{Var}\Biggl[\sum_{i=1}^n \log(\lambda_i) |\bm{z}(i)|^2\Biggr]
    \leq \frac{2n}{n+1} \sum_{i=1}^n \bigl(\lambda_i' - 1 - \log \lambda_i' \bigr)
    = \frac{2n^2}{n+1} \bigl(\kappa_{\rm Kap}^{1/n} - 1\bigr).
\end{align*}
Last observe that $t \mapsto ({\rm e}^t - 1)/t$ is strictly increasing for $t > 0$ and therefore
\begin{equation*}
    \frac{\kappa_{\rm Kap}^{1/n}-1}{\log (\kappa_{\rm Kap})/n}
    \leq 2
    \quad \text{if} \quad \log(\kappa_{\rm Kap}) \leq 1.42 n.
\end{equation*}
This completes the proof.

\section{Comparison of Krylov depths} \label{sec:log}
\begin{figure}[t]
    \centering
    \includegraphics[width=\linewidth]{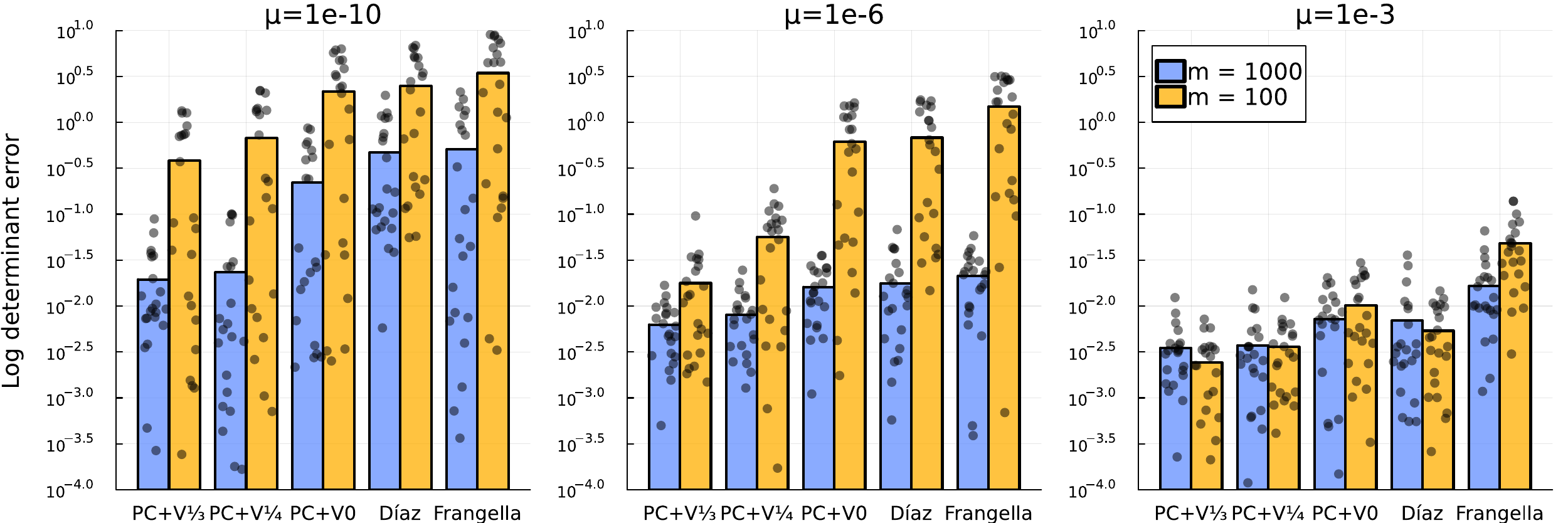}
    \caption{Comparison of log determinant calculations with Krylov depths $m = 1000$ (blue) and $m = 100$ (yellow).}
    \label{fig:krylov}
\end{figure}
\Cref{fig:krylov} compares stochastic determinant estimates obtained with two different Krylov depth parameters, $m = 1000$ and $m = 100$, while keeping the number of samples fixed at $10$.
Increasing the Krylov depth reduces the bias of the estimator, since it causes the Krylov approximation of the matrix log to become more accurate.
See \cref{sec:determinant,sec:vec_help} for the details of these calculations.

The $10\times$ larger Krylov subspace sometimes improves the accuracy of the log determinant by $1$--$2$ orders of magnitude, but the effect depends on $\mu$ and the preconditioner.
The accuracy is nearly the same for $m = 1000$ and $m = 100$ when $\mu = 10^{-3}$ or when $\mu = 10^{-6}$ and we use the highly accurate \textbf{PC+V1/3} preconditioner.
While insightful for comparison, we anticipate that the Krylov depth $m = 1000$ is too expensive to be practical in most situations, so we emphasize the depth $m = 100$ results in \cref{sec:empirical}.

\end{document}